\newtheorem{lemm}{Lemma}[section]
\newtheorem{coro}{Corollary}[section]
\newtheorem{prop}{Proposition}[section]
\newtheorem{theo}{Theorem}[section]
\theoremstyle{definition}
\newtheorem{defi}{Definition}[section]
\newtheorem{rema}{Remark}[section]
\numberwithin{equation}{section}
\def\Bbb{\mathbb}
\def\fb{\partial\{u>0\}}
\def\uep{u^\varepsilon}
\def\ep{\varepsilon}
\def\a{\alpha}
\def\fint{\operatorname {--\!\!\!\!\!\int\!\!\!\!\!--}}
\def\uepj{u^{\varepsilon_j}}
\def\pepj{P_{\varepsilon_j}}
\def\rn1{\Bbb R^{N+1}}
\def\rn{\Bbb R^{N}}
\def\epj{\varepsilon_j}
\def\fepj{f^{{\varepsilon}_j}}
\def\fep{f^{{\varepsilon}}}
\def\a*{\alpha^*_\ep}
\def\R{\mathbb R}
\def\lstar{\lambda^*}
\def\lone{\lambda_{\min}}
\def\ltwo{\lambda_{\max}}
\def\H{{\mathcal{H}}^{N-1}}
\def\RR{{\mathbb {R}}}
\def\pint{\operatorname {--\!\!\!\!\!\int\!\!\!\!\!--}}
\begin{document}
\title[regularity of the interface in a free boundary
problem for the $p(x)$-Laplacian]{Weak solutions and regularity of the interface in an inhomogeneous free boundary
problem for the $p(x)$-Laplacian}
\author[Claudia Lederman]{Claudia Lederman}
\author[Noemi Wolanski]{Noemi  Wolanski}
\address{IMAS - CONICET and Departamento  de
Ma\-te\-m\'a\-ti\-ca, Facultad de Ciencias Exactas y Naturales,
Universidad de Buenos Aires, (1428) Buenos Aires, Argentina.}
\email[Claudia Lederman]{clederma@dm.uba.ar} \email[Noemi
Wolanski]{wolanski@dm.uba.ar}
\thanks{Supported by the Argentine Council of Research CONICET under the project PIP625, Res. 960/12,  UBACYT 20020100100496 and
ANPCyT PICT 2012-0153.}

\keywords{Free boundary problem, variable exponent spaces,
regularity of the free boundary, singular perturbation, inhomogeneous problem.
\\
\indent 2010 {\it Mathematics Subject Classification.} 35R35,
35B65, 35J60, 35J70}\maketitle
%35R35 Free boundary problems for PDE
%35B65 Smoothness and regularity of solutions of PDE
%35J60 Nonlinear elliptic equations
%35J70 Degenerate elliptic equations

%%%%%%%%%% abstract %%%%%%%%%%%%%%%%%%%%%%%%%%%%%%%%%%%%%
\begin{abstract}
In this paper we study a one phase  free boundary
problem for the $p(x)$-Laplacian with non-zero right hand side.
We  prove that the free boundary  of a weak
solution is a $C^{1,\alpha}$ surface in a neighborhood of every
``flat'' free boundary point.
We also obtain further regularity results on the free boundary,
under further regularity assumptions on the data.
We apply these results to limit functions of an inhomogeneous
singular perturbation problem for the $p(x)$-Laplacian that we
studied in \cite{LW4}.
\end{abstract}

\bigskip

%%%%%%%%%%%%%% end of abstract %%%%%%%%%%%%%%%%%%%%%%%%%%%%%%%%%%%%%%%%%

%%%%%%%%%  introduction %%%%%%%%%%%%%%%%%%%%%%%%%%%%%%%%%%%%%%%%%%%%%%
\begin{section}{Introduction}
\label{sect-intro}

In this paper we study the following inhomogeneous free boundary
problem for the $p(x)$-Laplacian: $u\ge 0$ and
\begin{equation}\label{fbp-px}
\tag{$P(f,p,{\lambda}^*)$}
\begin{cases}
\Delta_{p(x)}u:=\mbox{div}(|\nabla u(x)|^{p(x)-2}\nabla
u)= f & \mbox{in }\{u>0\}\\
u=0,\ |\nabla u| = \lambda^*(x) & \mbox{on }\partial\{u>0\}.
\end{cases}
\end{equation}

The $p(x)$-Laplacian serves as a model for a stationary
non-newtonian fluid with properties depending on the point in the
region where it moves. For example, such a situation corresponds
to an electrorheological fluid. These are fluids such that their
properties depend on the magnitude of the electric field applied to
it. In some cases, fluid and Maxwell's equations become uncoupled
and a single equation for the $p(x)$-Laplacian appears (see
\cite{R}).

The free boundary problem \ref{fbp-px} appears, for instance, in
the limit of a singular perturbation problem that may model
 high activation energy deflagration flames in a fluid with electromagnetic sensitivity (see \cite{LW4}). When $p(x)\equiv2$
 (in which case the $p(x)$-Laplacian coincides with the Laplacian) this singular perturbation problem was introduced by Zeldovich and Frank-Kamenetski
 in order to model these kind of flames in \cite{ZFK}. In this latter case, the right hand side $f$ may come from nonlocal effects as well as from
 external sources (see \cite{LW2}).

 The free boundary problem considered in this paper also appears in an inhomogeneous minimization problem that we study in \cite{LW5} where we prove
 that minimizers are weak solutions to \ref{fbp-px}.

 \medskip

 In the present article we prove that the free boundary $\partial\{u>0\}$ ---with $u$ a weak solution of \ref{fbp-px}--- is a smooth hypersurface in a
 neighborhood of every ``flat'' free boundary point. 

The notion of weak solution used in this paper is such that it
also includes the limits of the singular perturbation problem
described above, that we studied in \cite{LW4}, under suitable nondegeneracy conditions.
 \medskip

More precisely, in the present work we prove that the free boundary of a \emph{weak} solution to
$P(f,p,{\lambda}^*)$ (see Definition \ref{weak2}) is a $C^{1,\alpha}$ surface
near flat free boundary points (Theorems \ref{reg-weak-conflat+dens}, \ref{reg-weak-conflat+flat} and \ref{reg-weak-conflat}). As a consequence we get that  the
free boundary is  $C^{1,\alpha}$   in a neighborhood of
every point in the reduced free boundary
(Theorem \ref{reg-weak-final}).
We also obtain further regularity results on the free boundary,
under further regularity assumptions on the data (Corollary \ref{higher-reg}).

\medskip

In the particular situation of the minimization problem mentioned
above,   we prove in \cite{LW5} that the set of singular free
boundary points has   null ${\mathcal H}^{N-1}$-measure.

\medskip

The basic ideas we follow in this paper to prove the regularity of the free boundary of a weak solution were introduced by Alt and
Caffarelli in the seminal paper \cite{AC}, where the case of \emph
{distributional} weak solutions of \ref{fbp-px} with $p(x)\equiv2$
and $f\equiv0$ was studied. The treatment of a quasilinear
equation was first done in \cite{ACF} for the uniformly elliptic
case. Then, the $p$-Laplacian ($p(x)\equiv p$) was treated in
\cite{DP1}. The main difference being that a control of $|\nabla
u|$ from below close to the free boundary is needed in order to be
able to work with linear equations with the ideas of \cite{ACF}.
Both \cite{ACF} and \cite{DP1} deal with minimizers that are weak
solutions in the stronger sense of \cite{AC}. A notion of weak
solution similar to the one in the present paper was first
considered in \cite{MW1}. The case of a variable power $p(x)$ was
considered in \cite{FBMW} still for minimizers and in the
homogeneous case $f\equiv0$. The linear inhomogeneous case was
treated in \cite{GS} and \cite{Le}  for minimizers.

\medskip

 We point out that the regularity of the free boundary for the inhomogeneous problem $f\not\equiv0$  had not been obtained even in the case of $p(x)\equiv p$.

\medskip

For other references related to the free boundary problem under consideration in this paper we would like to refer the reader to \cite{C1}, \cite{C2}, \cite{CJK},  \cite{DP2}, \cite{D}, \cite{DFS}, \cite{LN}, \cite{Ly}, \cite{MoWa}, \cite{O}, \cite{OY}, \cite{Wa}, \cite{We} and the references therein. This list is by no means exhaustive.

\medskip

An outline of the paper is as follows: in Section~2 we define the
notion of weak solution to the free boundary problem
\ref{bernoulli-px} and we derive some properties of weak
solutions. In Section~3 we study the behavior of weak solutions to the free boundary problem \ref{bernoulli-px} near ``flat" free boundary points.
In Section~4 we study the regularity of the free
boundary for weak solutions to the free boundary problem
\ref{bernoulli-px}. In Section~5 we present an application of
these results to limit functions of the singular perturbation
problem  that  we studied in \cite{LW4}. Our results apply to limit functions satisfying suitable conditions that are
fulfilled, for instance, under the situation we considered in \cite{LW5}.

%%%%%%%%% preliminaries and notation %%%%%%%%%%%%%%%%%%%%%%%%%%%%%%%%%%%%%
\begin{subsection}{Preliminaries on Lebesgue and Sobolev spaces with variable
exponent}

Let $p :\Omega \to  [1,\infty)$ be a measurable bounded function,
called a variable exponent on $\Omega$ and denote $p_{\max} = {\rm
ess sup} \,p(x)$ and $p_{\min} = {\rm ess inf} \,p(x)$. We define
the variable exponent Lebesgue space $L^{p(\cdot)}(\Omega)$ to
consist of all measurable functions $u :\Omega \to \R$ for which
the modular $\varrho_{p(\cdot)}(u) = \int_{\Omega} |u(x)|^{p(x)}\,
dx$ is finite. We define the Luxemburg norm on this space by
$$
\|u\|_{L^{p(\cdot)}(\Omega)} = \|u\|_{p(\cdot)}  = \inf\{\lambda >
0: \varrho_{p(\cdot)}(u/\lambda)\leq 1 \}.
$$

This norm makes $L^{p(\cdot)}(\Omega)$ a Banach space.

There holds the following relation between $\varrho_{p(\cdot)}(u)$
and $\|u\|_{L^{p(\cdot)}}$:
\begin{align*}
\min\Big\{\Big(\int_{\Omega} |u|^{p(x)}\, dx\Big)
^{1/{p_{\min}}},& \Big(\int_{\Omega} |u|^{p(x)}\, dx\Big)
^{1/{p_{\max}}}\Big\}\le\|u\|_{L^{p(\cdot)}(\Omega)}\\
 &\leq  \max\Big\{\Big(\int_{\Omega} |u|^{p(x)}\, dx\Big)
^{1/{p_{\min}}}, \Big(\int_{\Omega} |u|^{p(x)}\, dx\Big)
^{1/{p_{\max}}}\Big\}.
\end{align*}

Moreover, the dual of $L^{p(\cdot)}(\Omega)$ is
$L^{p'(\cdot)}(\Omega)$ with $\frac{1}{p(x)}+\frac{1}{p'(x)}=1$.

Let $W^{1,p(\cdot)}(\Omega)$ denote the space of measurable
functions $u$ such that $u$ and the distributional derivative
$\nabla u$ are in $L^{p(\cdot)}(\Omega)$. The norm

$$
\|u\|_{1,p(\cdot)}:= \|u\|_{p(\cdot)} + \| |\nabla u|
\|_{p(\cdot)}
$$
makes $W^{1,p(\cdot)}$ a Banach space.

The space $W_0^{1,p(\cdot)}(\Omega)$ is defined as the closure of
the $C_0^{\infty}(\Omega)$ in $W^{1,p(\cdot)}(\Omega)$.

For  more about these spaces, see \cite{DHHR,KR} and the
references therein.
\end{subsection}

\bigskip

\begin{subsection}{Preliminaries on solutions to $p(x)$-Laplacian.} Let
$p(x)$ be as above and let $g\in L^{\infty}(\Omega)$. We say that
$u$ is a solution to
$$
\Delta_{p(x)}u = g(x) \ \mbox{ in } \ \Omega
$$
if $u\in W^{1,p(\cdot)}(\Omega)$ and,  for every  $\varphi \in
W_0^{1,p(\cdot)}(\Omega)$, there holds that
$$
\int_{\Omega} |\nabla u(x)|^{p(x)-2}\nabla u \cdot \nabla
\varphi\, dx =-\int_{\Omega} \varphi\, g(x)\, dx.
$$
Under the assumptions of the present paper (see \ref{assump}
below) it follows from \cite{Wo} that $u\in L_{\rm
loc}^{\infty}(\Omega)$.

\bigskip

For any $x\in\Omega$, $\xi,\eta\in\R^N$ fixed we have the
following inequalities
\begin{align*}
&|\eta-\xi|^{p(x)}\leq C (|\eta|^{p(x)-2} \eta-|\xi|^{p(x)-2} \xi)
(\eta-\xi)&\quad  \mbox{ if } p(x)\geq 2,\\
&
 |\eta-\xi|^2\Big(|\eta|+|\xi|\Big)^{p(x)-2}
\leq C (|\eta|^{p(x)-2} \eta-|\xi|^{p(x)-2} \xi) (\eta-\xi)&\quad
\mbox{ if } p(x)< 2.
\end{align*}
These inequalities imply that the function
$A(x,\xi)=|\xi|^{p(x)-2}\xi$ is strictly monotone. Then, the
comparison principle for the $p(x)$-Laplacian holds since it
follows from the monotonicity of $A(x,\xi)$.

\end{subsection}

\bigskip

\begin{subsection}{Assumptions}\label{assump}

\medskip

Throughout the paper we let $\Omega\subset\R^N$ be a domain.

\bigskip

\noindent{\bf Assumptions on $p(x)$.} We  assume that the
function $p(x)$ verifies
\begin{equation}\label{pminmax}
1<p_{\min}\le p(x)\le p_{\max}<\infty,\qquad x\in\Omega.
\end{equation}

Unless otherwise stated, we assume that $p(x)$ is
Lipschitz continuous in $\Omega$. In some results we assume further that $p\in W^{1,\infty}(\Omega)\cap W^{2,q}(\Omega)$.

\bigskip

\noindent{\bf Assumptions on $\lstar(x)$.} We assume that the
function $\lstar$ is continuous in $\Omega$ and verifies
\begin{equation}\label{lsminmax}
0<\lone\le\lstar(x)\le\ltwo<\infty,\qquad x\in\Omega.
\end{equation}

In our main results $\lstar(x)$ is H\"older continuous in $\Omega$.

\bigskip

\noindent{\bf Assumptions on $f(x)$.} We  assume that 
$f\in L^{\infty}(\Omega)$. In some results we assume further that $f\in W^{1,q}(\Omega)$.

\end{subsection}

\bigskip

\begin{subsection}{Notation}\ \ \newline

$\bullet$ $N$ \quad spatial dimension

$\bullet$   $\Omega\cap\partial\{ u>0 \}$ \quad free boundary

$\bullet$  $|S|$ \quad  $N$-dimensional Lebesgue measure of the
set $S$

$\bullet$ ${\mathcal H}^{N-1}$ \quad  $(N-1)$-dimensional
Hausdorff measure

$\bullet$  $B_r(x_0)$ \quad  open ball of radius $r$ and center
$x_0$

$\bullet$  $B_r$ \quad  open ball of radius $r$ and center
$0$

$\bullet$  $B_r^+=B_r\cap\{x_N>0\}, \quad B_r^-=B_r\cap\{x_N<0\}$

$\bullet$  $B'_r(x_0)$ \quad  open ball of radius $r$ and center
$x_0$ in $\R^{N-1}$

$\bullet$  $B'_r$ \quad  open ball of radius $r$ and center
$0$ in $\R^{N-1}$

$\bullet$  $\fint_{B_r(x_0)}u= {1\over {|B_r(x_0)|}}
\int_{B_r(x_0)}u\,dx$

$\bullet$  $\fint_{\partial B_r(x_0)}u= {1\over {{\mathcal
H}^{N-1} (\partial B_r(x_0))}} \int_{\partial
B_r(x_0)}u\,d{\mathcal H}^{N-1}$

$\bullet$ $\chi_{{}_S}$ \quad  characteristic function of the set
$S$

$\bullet$ $u^{+}=\text{\rm max}(u,0)$,\quad $u^{-}=\text{\rm
max}(-u,0)$

$\bullet$ $\langle\,\xi\, ,\,\eta\,\rangle$ \, and \, $\xi \cdot \eta$ \quad both denote scalar product in $\Bbb R^{N}$

%%%%%%%%%% end of preliminaries and notation %%%%%%%%%%%%%%%%%%%%%%%%%%%%%%%%
\end{subsection}

\end{section}
%%%%%%%%%%  end of introduction %%%%%%%%%%%%%%%%%%%%%%%%%%%%%%%%%%%%%

%%%%%%%%%%%%%section weak solutions to the free boundary problem%%%%%%%%%%%%%%%%%%%

\begin{section}{Weak solutions to the free boundary problem $P(f,p,{\lambda}^*)$}
\label{sect-weak-solut} In this section we define the notion of
weak solution to the free boundary problem \ref{bernoulli-px}.

We also derive some properties of the weak solutions to problem
\ref{bernoulli-px}, which will be used in the next sections, where
a theory for the regularity of the free boundary for weak
solutions will be developed.

In all the results of this section $p(x)$ will be a Lipschitz continuous function.

We first need

\begin{defi}
\label{clw2-Definition 3.1} Let $u$ be a continuous and nonnegative function in a domain $\Omega\subset \Bbb R^{N}$.
We say that $\nu$ is the exterior unit
normal to the free boundary $\Omega\cap\fb$ at a point $x_0\in\Omega\cap\fb$ in the
 measure theoretic sense, if $\nu
\in\Bbb R^N$, $|\nu|=1$
and
\begin{equation}
\label{(3.3)} \lim_{r\to 0} \frac1{r^{N}} \int_{B_r(x_0)}
|\chi_{\{u>0\}}- \chi_{\{x\,/\, \langle x-x_0,\nu\rangle
<0\}}|\,dx = 0.
\end{equation}
\end{defi}

Then we have

\begin{defi}\label{weak2} Let $\Omega\subset \Bbb R^{N}$ be a domain. Let $p$ be a
measurable function in $\Omega$ with $1<p_{\min}\le p(x)\le
p_{\max}<\infty$, $\lstar$ continuous in $\Omega$ with
$0<\lone\le\lstar(x)\le\ltwo<\infty$ and $f\in L^\infty(\Omega)$.
We call $u$ a weak solution of \ref{bernoulli-px} in $\Omega$ if
\begin{enumerate}
\item $u$ is continuous and nonnegative in $\Omega$, $u\in W^{1,p(\cdot)}(\Omega)$ and
$\Delta_{p(x)}u=f$ in $\Omega\cap\{u>0\}$.
\item For
$D\subset\subset \Omega$ there are constants $c_{\min}=c_{\min}(D)$, $C_{\max}=C_{\max}(D)$, $r_0=r_0(D)$, $0< c_{\min}\leq
C_{\max}$, $r_0>0$, such that for balls $B_r(x)\subset D$ with $x\in
\partial \{u>0\}$ and $0<r\le r_0$
$$
c_{\min}\leq \frac{1}{r}\sup_{B_r(x)} u \leq C_{\max}.
$$
\item For $\mathcal{H}^{N-1}$ a.e.
$x_0\in\partial_{\rm{red}}\{u>0\}$ (this is, for ${\mathcal
H}^{N-1}$-almost every point $x_0\in\Omega\cap\fb$ such that
$\Omega\cap\partial\{u> 0\}$ has an exterior  unit normal
 $\nu(x_0)$ in the measure theoretic sense)
$u$ has the asymptotic development
\begin{equation}\label{asym-w}
u(x)=\lambda^*(x_0)\langle x-x_0,\nu(x_0)\rangle^-+o(|x-x_0|).
\end{equation}

\item For every $ x_0\in \Omega\cap\partial\{u>0\}$,
\begin{align*}
& \limsup_{\stackrel{x\to x_0}{u(x)>0}} |\nabla u(x)| \leq
\lambda^*(x_0).
\end{align*}

If there is a ball $B\subset\{u=0\}$ touching
$\Omega\cap\partial\{u>0\}$ at $x_0$ then,
$$\limsup_{\stackrel{x\to x_0}{u(x)>0}} \frac{u(x)}{\mbox{dist}(x,B)}\geq  \lambda^*(x_0). $$
\end{enumerate}
\end{defi}

\begin{defi}\label{nondegener} Let $v$ be a continuous nonnegative function
in a domain $\Omega\subset\mathbb{R}^N$. We say that $v$ is
nondegenerate at a point  $x_0\in \Omega\cap\{v=0\}$ if there
exist $c>0$, $\bar r_0>0$ such that one of the following conditions holds:
\begin{equation}\label{nond-prom-bol}
\fint_{B_r(x_0)} v\, dx\geq c r\quad \mbox{ for } 0<r\leq \bar r_0,
\end{equation}
\begin{equation}\label{nond-prom-casc}
\fint_{\partial B_r(x_0)} v\, dx\geq c r\quad \mbox{ for } 0<r\leq \bar r_0,
\end{equation}
\begin{equation}\label{nond-sup}
\sup_{B_r(x_0)} v\geq c r\quad \mbox{ for } 0<r\leq \bar r_0.
\end{equation}

\medskip

We say that $v$ is uniformly nondegenerate on a set
$\Gamma\subset\Omega\cap\{v=0\}$ in the sense of \eqref{nond-prom-bol} (resp. \eqref{nond-prom-casc}, \eqref{nond-sup}) if the constants $c$ and $\bar r_0$
in \eqref{nond-prom-bol} (resp.  \eqref{nond-prom-casc}, \eqref{nond-sup}) can be taken independent of the point $x_0\in\Gamma$.
\end{defi}

\begin{rema}\label{equiv-nondeg}
Assume that $v\ge 0$ is locally Lipschitz continuous in a domain $\Omega\subset\mathbb{R}^N$, $v\in W^{1,p(\cdot)}(\Omega)$ with $\Delta_{p(x)} v \ge  f \chi_{\{v >0\}}$,
where $f\in L^{\infty}(\Omega)$, $1<p_{\min}\le p(x)\le p_{\max}<\infty$ and $p(x)$ is
Lipschitz continuous. Then the three concepts of nondegeneracy in Definition \ref{nondegener} are equivalent
(for the idea of the proof, see Remark 3.1 in \cite{LW1}, where the case $p(x)\equiv 2$ and $f\equiv 0$ is treated).
\end{rema}

We will now derive some properties of the weak solutions.

\begin{lemm}\label{weak-radon}
If $u$ satisfies the hypothesis $(1)$ of Definition \ref{weak2} then $\lambda=\lambda_u:=
\Delta_{p(x)} u - f \chi_{\{u >0\}}$ is a nonnegative Radon measure with support on
$\Omega\cap\partial\{u>0\}$.
\end{lemm}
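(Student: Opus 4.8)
The plan is to show that $\lambda_u := \Delta_{p(x)}u - f\chi_{\{u>0\}}$, understood as a distribution on $\Omega$, is a nonnegative distribution, and therefore (by the Riesz representation theorem) a nonnegative Radon measure; once nonnegativity is in hand, locating its support on $\Omega\cap\partial\{u>0\}$ is immediate from the equation satisfied in $\{u>0\}$.

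First I would make sense of $\lambda_u$ as a distribution. Since $u\in W^{1,p(\cdot)}(\Omega)$, we have $|\nabla u|^{p(x)-2}\nabla u\in L^{p'(\cdot)}(\Omega)$ (because $\big||\nabla u|^{p(x)-2}\nabla u\big|^{p'(x)} = |\nabla u|^{p(x)}$ is integrable), so $\Delta_{p(x)}u$ is a well-defined distribution, and $f\chi_{\{u>0\}}\in L^\infty(\Omega)\subset L^1_{\rm loc}$; hence for $\varphi\in C_0^\infty(\Omega)$ we may define
\begin{equation*}
\langle \lambda_u,\varphi\rangle = -\int_\Omega |\nabla u|^{p(x)-2}\nabla u\cdot\nabla\varphi\,dx - \int_\Omega f\,\chi_{\{u>0\}}\,\varphi\,dx.
\end{equation*}
The core claim is: $\langle\lambda_u,\varphi\rangle\ge 0$ for every $\varphi\in C_0^\infty(\Omega)$ with $\varphi\ge 0$. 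The standard device is a truncation/penalization argument at the level $\{u>0\}$: for $\delta>0$ set $\beta_\delta(t)=\min(t/\delta,1)$ for $t\ge0$ (and $0$ for $t\le0$), so $\beta_\delta(u)\in W^{1,p(\cdot)}(\Omega)$ with $\beta_\delta(u)\to\chi_{\{u>0\}}$ pointwise and boundedly as $\delta\to0^+$, and $\nabla\beta_\delta(u)=\delta^{-1}\nabla u\,\chi_{\{0<u<\delta\}}$. Testing the equation $\Delta_{p(x)}u=f$ in $\Omega\cap\{u>0\}$ against $\varphi\,\beta_\delta(u)$ — which is a legitimate test function, nonnegative and compactly supported inside the open set $\{u>0\}$ after a further regularization, or directly since it vanishes where $u=0$ — gives
\begin{equation*}
-\int |\nabla u|^{p(x)-2}\nabla u\cdot\nabla(\varphi\,\beta_\delta(u))\,dx = \int f\,\varphi\,\beta_\delta(u)\,dx.
\end{equation*}
Expanding the left side, the "bad" term is $-\tfrac1\delta\int_{\{0<u<\delta\}}\varphi\,|\nabla u|^{p(x)}\,dx$, which is $\le 0$ since $\varphi\ge0$. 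Therefore
\begin{equation*}
-\int |\nabla u|^{p(x)-2}\nabla u\cdot\nabla\varphi\,\beta_\delta(u)\,dx \ge \int f\,\varphi\,\beta_\delta(u)\,dx,
\end{equation*}
and letting $\delta\to0^+$ (dominated convergence on both sides, using $|\nabla u|^{p(x)-2}\nabla u\cdot\nabla\varphi\in L^1$ and $\beta_\delta(u)\to\chi_{\{u>0\}}$) yields exactly $\langle\lambda_u,\varphi\rangle\ge0$. By the Riesz–Markov theorem a nonnegative distribution is a nonnegative Radon measure.

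Finally, for the support: if $B_r(x_0)\subset\subset\Omega\cap\{u>0\}$ then $u$ solves $\Delta_{p(x)}u=f$ there and $\chi_{\{u>0\}}\equiv1$, so $\lambda_u=0$ as a distribution on $B_r(x_0)$; hence $\operatorname{supp}\lambda_u\subset\Omega\setminus\{u>0\}$. On the other hand $\lambda_u$ also vanishes on the interior of $\{u=0\}$: there $\nabla u=0$ a.e. and $\chi_{\{u>0\}}=0$, so $\langle\lambda_u,\varphi\rangle=0$ for $\varphi$ supported in $\operatorname{int}\{u=0\}$. Combining, $\operatorname{supp}\lambda_u\subset\Omega\cap\partial\{u>0\}$, as claimed.

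The main obstacle is the justification that $\varphi\,\beta_\delta(u)$ is an admissible test function in the weak formulation of $\Delta_{p(x)}u=f$ on the open set $\{u>0\}$: strictly, $\beta_\delta(u)$ need not be compactly supported inside $\{u>0\}$ since $u$ can be arbitrarily small near $\partial\{u>0\}$. This is handled by first testing with $\varphi\,\beta_\delta(u)\,\eta_\varepsilon$ where $\eta_\varepsilon$ is a cutoff equal to $1$ on $\{u\ge\varepsilon\}$ and supported in $\{u>\varepsilon/2\}$, and then letting $\varepsilon\to0$ before $\delta\to0$; alternatively one notes $\varphi\beta_\delta(u)\in W_0^{1,p(\cdot)}(\{u>0\})$ directly because it is an $W^{1,p(\cdot)}$ function vanishing outside a set where $u\ge$ something positive is not needed — it suffices that it lies in the closure of test functions, which follows from $\beta_\delta(0)=0$ and the continuity of $u$. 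Once this technical point is settled the rest is routine, and the same computation is the standard one used for obstacle-type problems.
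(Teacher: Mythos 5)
Your argument is correct and is essentially the proof the paper has in mind: the paper simply cites Lemma 2.1 of \cite{LW3} (the case $p\equiv2$), whose proof is exactly this truncation of $\chi_{\{u>0\}}$ by nondecreasing Lipschitz functions of $u$, here adapted to the $p(x)$-Laplacian, followed by Riesz representation and the localization of the support. Two small points to make explicit: take the auxiliary cutoff $\eta_\varepsilon$ itself as a Lipschitz function of $u$ (so the extra gradient term it produces has a favorable sign and can be discarded, rather than needing to vanish), and note that identifying the limit with $\langle\lambda_u,\varphi\rangle$ over all of $\Omega$ uses the standard fact that $\nabla u=0$ a.e.\ on $\{u=0\}$.
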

\begin{proof}
The proof follows as in the case $p(x)\equiv 2$, that was done in \cite{LW3}, Lemma 2.1.
\end{proof}

\begin{prop}\label{loc-lip}
Assume that $u$ satisfies  hypothesis $(1)$ of Definition \ref{weak2}. Assume moreover that $u\in L^\infty(\Omega)$,
$\|\nabla p\|_{L^{\infty}}\leq L$
and there exist  constants $C_0>0$, $\hat r_0>0$ such that if  $x\in \Omega\cap\partial\{u>0\}$, $B_r(x)\subset\Omega$ and $r\le \hat r_0$, then
$$\sup_{B_{r(x)}} u\leq C_0 r.$$
Then, $u$ is locally Lipschitz.
Moreover, for any $D\subset\subset
 \Omega$ the Lipschitz constant of $u$ in $D$ can be
estimated by a constant $C$ depending only on $N,  p_{\min}, p_{\max}, L,
{\rm dist}(D,\partial\Omega)$, $\|u\|_{L^\infty(\Omega)}$, $\|f\|_{L^\infty(\Omega)}$, $C_0$ and $\hat r_0$.
\end{prop}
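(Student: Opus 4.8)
The plan is to reduce the proposition to a pointwise gradient bound and then conclude by a standard fact. First I would observe that it suffices to prove $|\nabla u(x_0)|\le C$ for a.e.\ $x_0\in D$, with $C$ of the asserted form: since $u$ is continuous, integrating $\nabla u$ along segments (valid for a.e.\ segment and then extended by continuity) shows that such a bound forces $u$ to be Lipschitz with constant $C$ on every convex subset of $D$, hence locally Lipschitz in $D$. Moreover $\nabla u=0$ a.e.\ on $\{u=0\}$, this being a level set of a Sobolev function, so only points $x_0\in D\cap\{u>0\}$ at which $u$ is differentiable need be treated. I set $\rho_0=\frac14\min\{\mathrm{dist}(D,\partial\Omega),\hat r_0,1\}$.

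Fix such an $x_0$ and let $d=\mathrm{dist}(x_0,\{u=0\})$, so that $B_d(x_0)\subset\Omega\cap\{u>0\}$. If $d\ge\rho_0$ I would work in $B_{\rho_0}(x_0)$, where $0<u\le\|u\|_{L^\infty(\Omega)}$, and put $\rho=\rho_0$. If $d<\rho_0$, I would choose $y_0\in\{u=0\}$ with $|x_0-y_0|=d$; since the open segment from $x_0$ to $y_0$ lies in $B_d(x_0)\subset\{u>0\}$, the point $y_0$ belongs to $\Omega\cap\fb$, and since $2d<2\rho_0\le\mathrm{dist}(y_0,\partial\Omega)$ and $2d\le\hat r_0$ the linear growth hypothesis applies at $y_0$ with radius $2d$ and gives $\sup_{B_d(x_0)}u\le\sup_{B_{2d}(y_0)}u\le 2C_0 d$; here I put $\rho=d$. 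In either case $B_\rho(x_0)\subset\Omega\cap\{u>0\}$, so I rescale by setting $v(z)=\rho^{-1}u(x_0+\rho z)$ for $z\in B_1$. Then $v\ge 0$ is a weak solution of $\Delta_{q(\cdot)}v=g$ in $B_1$ with $q(z)=p(x_0+\rho z)$ and $g(z)=\rho f(x_0+\rho z)$, whose data satisfy $p_{\min}\le q\le p_{\max}$, $\|\nabla q\|_{L^\infty(B_1)}\le\rho L\le L$, $\|g\|_{L^\infty(B_1)}\le\|f\|_{L^\infty(\Omega)}$, and $\|v\|_{L^\infty(B_1)}\le\max\{2C_0,\rho_0^{-1}\|u\|_{L^\infty(\Omega)}\}$.

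Thus $v$ is a bounded weak solution of a $p(x)$-Laplacian-type equation in $B_1$ with bounded right hand side and Lipschitz exponent, all of whose structural quantities are controlled by $N,p_{\min},p_{\max},L,\mathrm{dist}(D,\partial\Omega),\hat r_0,C_0,\|f\|_{L^\infty(\Omega)}$ and $\|u\|_{L^\infty(\Omega)}$. Invoking the interior gradient estimate --- indeed the interior $C^{1,\alpha}$ regularity --- for such equations from the regularity theory of the $p(x)$-Laplacian yields $|\nabla v(0)|\le C$, that is $|\nabla u(x_0)|=|\nabla v(0)|\le C$, with $C$ of the required form. This establishes the a.e.\ gradient bound on $D$, and the proposition follows.

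The one genuinely nonroutine ingredient is this last step: the interior gradient (equivalently, $C^{1,\alpha}$) estimate for $\Delta_{p(x)}u=f$ with $f\in L^\infty$ and $p$ Lipschitz, together with the fact that its constant depends only on the structural data; this is the point where the regularity theory for the variable-exponent operator must be quoted, and one must check that the dependences are as stated. Everything else is bookkeeping: that the rescaled exponent stays in $[p_{\min},p_{\max}]$ with Lipschitz constant $\le L$ (which is why the scaling radius is capped at $1$); that the nearest point $y_0$ in the second case genuinely lies on the free boundary, so the linear growth hypothesis is applicable; and the passage from an a.e.\ bound on $|\nabla u|$ plus continuity of $u$ to local Lipschitz continuity with the same constant.
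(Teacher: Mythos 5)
Your proposal is correct and takes essentially the same route as the paper's proof: distinguish points close to and far from the zero set, use the linear growth hypothesis at the nearest free boundary point to bound the rescaled function $v(z)=\rho^{-1}u(x_0+\rho z)$ in $B_1$, and invoke the interior $C^{1,\alpha}$ estimates of Fan for $\Delta_{q(\cdot)}v=g$ with structural constants, concluding local Lipschitz continuity from the a.e.\ gradient bound. The only minor difference is in the last step: the paper uses Fan's result for the H\"older seminorm of $\nabla v$ and then recovers the gradient bound via a Caccioppoli-type inequality (yielding the explicit power $\big(\frac1r\sup_{B_r}u\big)^{p_{\max}/p_{\min}}$), whereas you quote the full interior gradient estimate with constant depending on $\|v\|_{L^\infty(B_1)}$ directly; both rest on the same local regularity theory.
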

\begin{proof}
We will find a constant $C$ such that $|\nabla u|\le C$ in
$D\cap\{u>0\}.$ Let $r_1 = {\rm dist}(D,\partial\Omega)$ and $y\in
D\cap\{u>0\}$ such that ${\rm dist}(y,\partial\{u>0\})<
\min\{\frac{\hat r_0}2, \frac{r_1}3,1\}$. Let $\bar
x\in\partial\{u>0\}$ such that $r={\rm
dist}(y,\partial\{u>0\})=|\bar x-y|$. Then $B_r(y)\subset
B_{2r}(\bar x)$ and thus,
$$\frac1r\sup_{B_{r(y)}} u \le \frac1r\sup_{B_{{2r}(\bar x)}} u\leq 2C_0.$$
We will show that there exists $\tilde C$ such that
$$|\nabla u(y)|\leq \tilde C\Big(1+\big(\frac{1}{r}\sup_{B_r(y)}
u\big)^{p_{\max}/p_{\min}}\Big).$$

In fact, let $v(z)=\frac1r u(y+rz)$. Then, $||v||_{L^\infty(B_1)}\le 2C_0$ and $\Delta_{{\bar p}(x)}v={\bar f}$ in $B_1$, with
${\bar p}(z)=p(y+rz)$, ${\bar f}(z)=rf(y+rz)$. There holds that $p_{\min}\le \bar p(x)\le p_{\max}$, $\|\nabla {\bar p}\|_{L^{\infty}}\leq L$ and
$\|{\bar f}\|_{L^\infty(B_1)}\le \|f\|_{L^\infty(\Omega)}$, if $0<r<1$. By the local results in \cite{Fan} it follows that $v\in C_{{\rm loc}}^{1,\alpha}(B_1)$ and
then, there exists $C_1>0$ such that $||\nabla v||_{C^{\alpha}(B_{1/2})}\le C_1$.
Therefore, if $z\in B_{1/2}(0)$
$$|\nabla v(0)|\leq C_2+|\nabla v(z)|,$$
and thus, if $x\in B_{r/2}(y)$,
$$|\nabla u(y)|\leq C_2+|\nabla u(x)|.$$
If $|\nabla u(y)|\leq 1$, the desired bound follows.
If $|\nabla u(y)|\geq 1$,  we get
$$|\nabla u(y)|^{p_{\min}}\leq |\nabla u(y)|^{p(x)} \leq C_3(1+|\nabla u(x)|^{p(x)}).$$
Integrating for $x\in B_{r/2}(y)$, we obtain
$$|\nabla u(y)|^{p_{\min}}\leq  C_3\Big(1+\pint_{B_{r/2}(y)}|\nabla u(x)|^{p(x)}\Big).$$
Applying Cacciopoli type inequality (see \cite{Fan}, Lemma 3.1, (3.5)) we have, for some constants $C_4$ and $R_0$ that, if $r\le R_0$ and
$\omega=\pint_{B_{r}(y)} u(x)$,
\begin{align*}|\nabla u(y)|^{p_{\min}}&\leq  C_4\Big(1+\pint_{B_{r}(y)}
\Big(\frac{|u(x)-\omega|}{r}\Big)^{p(x)}\Big)\\&\leq
C_4\Big(2+ \big(\frac{2}{r}\sup_{B_r(y)}
u\big)^{p_{\max}}\Big).
\end{align*}
This gives the result in case ${\rm dist}(y,\partial\{u>0\})<
R_1$, with $R_1=\min\{R_0, \frac{\hat r_0}2, \frac{r_1}3, 1\}$. If, on
the other hand, ${\rm dist}(y,\partial\{u>0\})\ge R_1$, the local
results of \cite{Fan} give
$$
|\nabla u(y)|\le \bar C,
$$
for a constant $\bar C$ depending on $N,  p_{\min}, p_{\max}, L$,
 $\|u\|_{L^\infty(\Omega)}$, $\|f\|_{L^\infty(\Omega)}$, $R_1$.
We thus obtain the desired estimate.
\end{proof}

\begin{lemm}\label{vale-viejo(2)} Assume that $u$ satisfies  hypotheses $(1)$ and $(2)$ of Definition
\ref{weak2}. For $D\subset\subset \Omega$ there are constants $0< \tilde c_{\min}\leq
\tilde C_{\max}$ and $\tilde r_0>0$ such that for balls $B_r(x)\subset D$ with $x\in
\partial \{u>0\}$ and $0<r\le \tilde r_0$
\begin{equation}\label{promedios}
\tilde c_{\min}\leq \frac{1}{r}\fint_{B_r(x)} u dx\, \leq \tilde C_{\max}.
\end{equation}
\end{lemm}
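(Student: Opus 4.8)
The plan is to deduce the two-sided average bound in \eqref{promedios} from the pointwise sup-bound in hypothesis $(2)$ together with the fact (Lemma \ref{weak-radon}) that $\lambda_u = \Delta_{p(x)}u - f\chi_{\{u>0\}}$ is a nonnegative Radon measure; in particular $\Delta_{p(x)}u \ge f\chi_{\{u>0\}} \ge -\|f\|_{L^\infty}$ in $\Omega$ in the distributional sense. Fix $D\subset\subset \Omega$ and choose an intermediate domain $D\subset\subset D'\subset\subset\Omega$. The upper bound is essentially immediate: for $B_r(x)\subset D$ with $x\in\partial\{u>0\}$ and $0<r\le r_0(D')$ small,
$$
\frac1r\fint_{B_r(x)} u\,dx \le \frac1r\sup_{B_r(x)} u \le C_{\max}(D'),
$$
so $\tilde C_{\max}:=C_{\max}(D')$ works. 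The content of the lemma is the lower bound.

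For the lower bound I would argue as follows. By hypothesis $(2)$ there is $y\in B_{r/4}(x)$ with $u(y)\ge \frac{c_{\min}}{4}\, r$ (after possibly passing to a slightly larger ball and shrinking $r_0$, using $\sup_{B_{r/4}(x)} u \ge \tfrac14\sup_{B_{r/4}(x)}u$ is not quite it — rather use $\sup_{B_{r/2}(x)}u \ge \tfrac{c_{\min}}{2}r$ and pick a near-maximizer). Since $u$ is locally Lipschitz (Proposition \ref{loc-lip} applies: hypothesis $(2)$ gives exactly the sup-bound needed, and $u\in L^\infty_{\rm loc}$ by the remark after the definition of solution) with Lipschitz constant $L_0$ on $D'$, we get $u(z)\ge \frac{c_{\min}}{4}r$ for all $z$ in a ball $B_{\rho}(y)$ with $\rho = c\, r$ for $c=c_{\min}/(8L_0)$, say, provided this ball lies in $D'$ — which holds for $r\le \tilde r_0$ small. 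Hence
$$
\fint_{B_r(x)} u\,dx \ge \frac{1}{|B_r|}\int_{B_\rho(y)} u\,dz \ge \frac{c_{\min}}{4}\,r\;\frac{|B_\rho|}{|B_r|} = \frac{c_{\min}}{4}\,c^N\, r,
$$
so $\tilde c_{\min}:=\frac{c_{\min}}{4}c^N>0$ and $\tilde r_0$ depending on $D$, $D'$, $c_{\min}$, $C_{\max}$, $L_0$, ${\rm dist}(D,\partial\Omega)$ gives the claim. (One must check $B_\rho(y)\subset D'$: since $y\in B_{r/2}(x)\subset D$ and $\rho\le r\le \tilde r_0 \le \tfrac12{\rm dist}(D,\partial D')$, this is fine.)

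The main obstacle is not in any single estimate but in the bookkeeping of constants and domains: one needs the Lipschitz constant $L_0$ from Proposition \ref{loc-lip} to be available \emph{uniformly} on a fixed intermediate domain, which requires verifying that hypothesis $(2)$ supplies precisely the hypothesis of that Proposition (the sup-bound $\sup_{B_r(x)}u\le C_0 r$ with $C_0, \hat r_0$ depending only on $D'$), and then one must ensure all the balls $B_r(x)$, $B_\rho(y)$ stay inside $D'$ for the chosen $\tilde r_0$. A subtlety worth a line: the Lipschitz bound degenerates near $\partial D'$, so it is cleanest to fix $D'$ with $D\subset\subset D'\subset\subset\Omega$ at the outset, apply Proposition \ref{loc-lip} on $D'$, and only then restrict attention to $x\in\partial\{u>0\}\cap D$ and radii $r\le\tilde r_0 \le \tfrac14\,{\rm dist}(D,\partial D')$. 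With this setup every step above is elementary.
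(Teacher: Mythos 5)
Your proposal is correct, and it proves the lemma by unpacking, in a self-contained way, what the paper disposes of by citation. The paper's proof is one line: Proposition \ref{loc-lip} (local Lipschitz continuity), Lemma \ref{weak-radon} (so that $\Delta_{p(x)}u\ge f\chi_{\{u>0\}}$) and Remark \ref{equiv-nondeg}; the point is that hypothesis $(2)$ is nondegeneracy in the sup sense, the Remark upgrades it to nondegeneracy in the ball-average sense (giving the lower bound in \eqref{promedios}), and the upper bound is immediate from the sup bound. You instead prove the relevant direction of that equivalence directly: a near-maximizer of $u$ in $B_{r/2}(x)$ together with the Lipschitz bound from Proposition \ref{loc-lip} on an intermediate domain $D'$ gives $u\gtrsim c_{\min}r$ on a ball of radius comparable to $r$ inside $B_r(x)$, hence the average lower bound. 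Note that your appeal to Lemma \ref{weak-radon} at the outset is never actually used: the subsolution property enters the hypotheses of Remark \ref{equiv-nondeg}, but your Lipschitz/near-maximizer argument does not need it, so your route is marginally more elementary, while the paper's citation buys brevity and the full three-way equivalence of the nondegeneracy notions at once. Two cosmetic points: take $\rho=\min\{c,\tfrac12\}r$ (or observe that $B_\rho(y)\cap B_r(x)$ always contains $B_{\min\{\rho,r/2\}}(y)$ since $y\in B_{r/2}(x)$), because the chain $\fint_{B_r(x)}u\,dx\ \ge\ \frac{|B_\rho|}{|B_r|}\,\inf_{B_\rho(y)}u$ needs $B_\rho(y)\subset B_r(x)$, not merely $B_\rho(y)\subset D'$; and with your choice $c=c_{\min}/(8L_0)$ the lower bound on $B_\rho(y)$ comes out as $\tfrac{c_{\min}}{8}r$ rather than $\tfrac{c_{\min}}{4}r$ --- both are harmless bookkeeping. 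The domain bookkeeping you flag ($D\subset\subset D'\subset\subset\Omega$, applying Proposition \ref{loc-lip} on a bounded intermediate domain where $u$ is bounded by continuity and hypothesis $(2)$ supplies the required sup-bound) is exactly what the paper does implicitly, so no gap there.
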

\begin{proof} The result follows from Proposition \ref{loc-lip}, Lemma \ref{weak-radon} and Remark \ref{equiv-nondeg}.
\end{proof}

\begin{lemm}\label{densprop2}
Assume that $u$ satisfies  hypotheses $(1)$ and $(2)$ of Definition
\ref{weak2}.

Then, for any domain $D\subset\subset \Omega$ there exist  constants
$c$ and $\bar r_0>0$, with $0<c<1$, depending on $||\nabla u||_{L^\infty(D)}$, $\|f\|_{L^\infty(D)}$ $r_0$,  $p_{\min}$, $p_{\max}$,
$||\nabla p||_{L^\infty(D)}$  and  $c_{\min}$, such that for every
$B_r\subset D$, centered at the free boundary with $0<r\le \bar r_0$ we have
$$\frac{|B_r\cap\{u>0\}|}{|B_r|}\geq c.$$
\end{lemm}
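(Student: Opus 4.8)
The plan is to prove the positive density estimate $|B_r\cap\{u>0\}|/|B_r|\ge c$ for balls centered at a free boundary point by combining the nondegeneracy (hypothesis (2), via Lemma \ref{vale-viejo(2)}) with the Lipschitz regularity of $u$ established in Proposition \ref{loc-lip}, in the classical fashion of Alt--Caffarelli. First I would fix $B_r=B_r(x_0)$ with $x_0\in\Omega\cap\partial\{u>0\}$ and $0<r\le\tilde r_0$, so that by Lemma \ref{vale-viejo(2)} we have $\fint_{B_r(x_0)}u\,dx\ge\tilde c_{\min}r$. Since $u$ is locally Lipschitz in $D$ (Proposition \ref{loc-lip}; note the hypotheses of that proposition are guaranteed by hypothesis (2) of Definition \ref{weak2}), let $L_0=\|\nabla u\|_{L^\infty(D)}$. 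Because $u(x_0)=0$, we get the pointwise bound $u(x)\le L_0|x-x_0|\le L_0 r$ for all $x\in B_r(x_0)$.

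Next I would split the integral over $B_r(x_0)$ according to whether $u>0$ or $u=0$: the contribution from $\{u=0\}$ vanishes, so
\[
\tilde c_{\min}r\le \fint_{B_r(x_0)}u\,dx=\frac{1}{|B_r|}\int_{B_r(x_0)\cap\{u>0\}}u\,dx\le L_0 r\,\frac{|B_r(x_0)\cap\{u>0\}|}{|B_r|}.
\]
Dividing by $L_0 r$ yields $|B_r(x_0)\cap\{u>0\}|/|B_r|\ge \tilde c_{\min}/L_0=:c$, and one checks $0<c<1$ (it is positive by construction; it is less than $1$ because $u$ cannot be positive on all of $B_r(x_0)$ while vanishing at the center $x_0$ with a definite Lipschitz bound — alternatively one may simply shrink $c$ to ensure $c<1$). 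The constant $c$ and the radius threshold $\bar r_0=\tilde r_0$ then depend only on the quantities listed in the statement, since $\tilde c_{\min}$ from Lemma \ref{vale-viejo(2)} depends on $c_{\min}$ (and $N$), and $L_0$ is controlled by Proposition \ref{loc-lip} in terms of $\|\nabla u\|_{L^\infty(D)}$ (or, tracing further back, $\|u\|_{L^\infty}$, $\|f\|_{L^\infty}$, $p_{\min}$, $p_{\max}$, $\|\nabla p\|_{L^\infty}$, $C_0$, $\hat r_0$).

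The argument is short; the only point requiring care — which I would treat as the ``main obstacle'' — is the bookkeeping of constants and radii: one must verify that the hypotheses needed to invoke Proposition \ref{loc-lip} (namely a linear growth bound $\sup_{B_r(x)}u\le C_0 r$ near the free boundary) are indeed supplied by hypothesis (2) of Definition \ref{weak2} with $C_0=C_{\max}$ and $\hat r_0=r_0$, and that all the radius thresholds ($\tilde r_0$ from Lemma \ref{vale-viejo(2)}, the constraint $B_r\subset D$) are compatible, so that $\bar r_0$ can be chosen as their minimum. No genuinely new estimate is needed beyond what has already been proved in this section.
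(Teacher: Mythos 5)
Your argument is correct and coincides with the paper's own proof: both combine the lower bound $\fint_{B_r(x_0)}u\,dx\ge\tilde c_{\min}r$ from Lemma \ref{vale-viejo(2)} with the Lipschitz bound $u\le \|\nabla u\|_{L^\infty(D)}\,r$ on $B_r(x_0)$ (valid since $u(x_0)=0$ and $B_r(x_0)\subset D$), and note that only the set $\{u>0\}$ contributes to the integral, giving $c=\tilde c_{\min}/\|\nabla u\|_{L^\infty(D)}$. No substantive difference from the paper's argument.
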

\begin{proof}
We first notice that, by Proposition \ref{loc-lip} and Lemma \ref{vale-viejo(2)}, $u$ is locally Lipschitz and \eqref{promedios} holds.
Let $B_r(x_0)\subset D$ with $x_0\in \partial \{u>0\}$. We observe that $u(x)\le r||\nabla u||_{L^\infty(D)}$ in $\{u>0\}\cap B_r(x_0)$. Therefore, for
$0<r\le \tilde r_0$
$$
\tilde c_{\min}\leq \frac{1}{r}\fint_{B_r(x_0)} u dx\, \leq ||\nabla u||_{L^\infty(D)}\frac{|B_r(x_0)\cap\{u>0\}|}{|B_r(x_0)|}.
$$
\end{proof}
\begin{rema}\label{medlebcero} Assume that $u$ satisfies  hypotheses $(1)$ and $(2)$ of Definition
\ref{weak2}. It follows from Lemma \ref{densprop2} that the free boundary has
Lebesgue measure zero.
\end{rema}

\begin{lemm} \label{equivmedida}
Assume that $u$ satisfies  hypotheses $(1)$ and $(2)$ of Definition
\ref{weak2}.

Then for any domain
$D\subset\subset \Omega$ there exist constants $\,c, C$ and $\bar r_0$
 depending on $N$, $p_{\min}$, $p_{\max}$, $||\nabla p||_{L^\infty(D)}$, $||f||_{L^\infty(D)}$, $||\nabla u||_{L^\infty(D)}$,  $c_{\min}$,
 $C_{\max}$ and $r_0$ such
that,  for every $B_r\subset D$
centered at the  free boundary, with $r\le \bar r_0$, we have
$$ c r^{N-1}\leq \int_{B_r}d\lambda \leq C r^{N-1}.$$
Here $\lambda=\lambda_u$ is as in Lemma \ref{weak-radon}.
\end{lemm}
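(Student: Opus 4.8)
The plan is to establish the two inequalities separately, in each case by testing the measure $\lambda=\lambda_u$ (a nonnegative Radon measure supported on $\Omega\cap\partial\{u>0\}$ by Lemma~\ref{weak-radon}, with $\Delta_{p(x)}u=f\chi_{\{u>0\}}+\lambda$) against a well chosen function. Throughout I would use that $u$ is locally Lipschitz by Proposition~\ref{loc-lip}, fix $D\subset\subset D'\subset\subset\Omega$, write $L_0=\|\nabla u\|_{L^\infty(D')}$, and take $\bar r_0$ small enough that $B_{2r}(x_0)\subset D'$ and $r\le 1$ whenever $B_r(x_0)\subset D$ and $r\le\bar r_0$; it is enough to prove the bounds for a.e.\ $r$, since the remaining values follow by monotonicity of $r\mapsto\lambda(\overline{B_r(x_0)})$. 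For the \emph{upper bound}: choosing $\eta\in C_0^\infty(B_{2r}(x_0))$ with $0\le\eta\le1$, $\eta\equiv1$ on $B_r(x_0)$, $|\nabla\eta|\le C/r$, one has
\[
\int_{B_r(x_0)}d\lambda\le\int_{B_{2r}(x_0)}\eta\,d\lambda=-\int_{B_{2r}(x_0)}|\nabla u|^{p(x)-2}\nabla u\cdot\nabla\eta\,dx-\int_{B_{2r}(x_0)}\eta\,f\,\chi_{\{u>0\}}\,dx ;
\]
since $\nabla u=0$ a.e.\ on $\{u=0\}$, $|\nabla u|\le L_0$ and $|B_{2r}|\le C r^{N}$, the first term is $\le C r^{N-1}$, and the second is $\le\|f\|_{L^\infty(D')}|B_{2r}|\le C r^{N}\le C r^{N-1}$, which gives the upper estimate.

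For the \emph{lower bound}, which is the core of the argument, I would compare $u$ with the solution $v\in u+W_0^{1,p(\cdot)}(B_r(x_0))$ of $\Delta_{p(x)}v=f\chi_{\{u>0\}}$ in $B_r(x_0)$, $v=u$ on $\partial B_r(x_0)$. Then $\Delta_{p(x)}u-\Delta_{p(x)}v=\lambda\ge0$ in $B_r(x_0)$ with zero boundary difference, so the comparison principle (valid by strict monotonicity of $\xi\mapsto|\xi|^{p(x)-2}\xi$) gives $0\le u\le v$. Testing the weak formulations of the two equations with $\psi=v-u$ and subtracting yields
\[
\int_{B_r(x_0)}(v-u)\,d\lambda=\int_{B_r(x_0)}\big(|\nabla v|^{p(x)-2}\nabla v-|\nabla u|^{p(x)-2}\nabla u\big)\cdot(\nabla v-\nabla u)\,dx
\]
(the approximation needed to use $\psi=v-u$ in the measure term being standard, as in \cite{LW3}). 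By the monotonicity inequalities of Section~1, and since $|\nabla u|,|\nabla v|\le L_0$ (the bound for $v$ from the interior $C^{1,\alpha}$ estimates of \cite{Fan} together with $\|v\|_{L^\infty(B_r(x_0))}\le C r$), the integrand on the right is $\ge c_*|\nabla(v-u)|^{\beta}$ pointwise, with $\beta\ge1$ depending only on $p_{\max}$ and $c_*>0$ on $p_{\min},p_{\max},L_0$; moreover $v-u\le v\le C r$ by the maximum principle (using $v=u\le C_{\max}r$ on $\partial B_r(x_0)$ and $|\Delta_{p(x)}v|\le\|f\|_\infty$). Hence $C r\,\lambda(B_r(x_0))\ge c_*\int_{B_r(x_0)}|\nabla(v-u)|^{\beta}\,dx$, and it remains to prove $\int_{B_r(x_0)}|\nabla(v-u)|^{\beta}\,dx\ge c\,r^{N}$.

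For this last estimate I would bring in the nondegeneracy contained in hypothesis~$(2)$: there is $y\in B_{r/2}(x_0)$ with $u(y)\ge c_{\min}r/2$, so $v(y)\ge c_{\min}r/2$, and since $v\ge0$ solves $\Delta_{p(x)}v=g$ with $|g|\le\|f\|_\infty$, Harnack's inequality gives $\inf_{B_{3r/4}(x_0)}v\ge c_H\,c_{\min}r/2-C r^{1+\delta}\ge c_2 r$ for $r\le\bar r_0$, where $\delta=1/(p_{\max}-1)>0$; in particular $(v-u)(x_0)=v(x_0)\ge c_2 r$. Because $v-u$ is Lipschitz in $B_{3r/4}(x_0)$ with a constant $L_1$ depending only on the data, $v-u\ge c_2 r/2$ on $B_{c_3 r}(x_0)$ for some $c_3\in(0,\tfrac12]$, while $v-u=0$ on $\partial B_r(x_0)$. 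Writing $x=x_0+\rho\omega$ and integrating $\rho\mapsto(v-u)(x_0+\rho\omega)$ (absolutely continuous for a.e.\ $\omega$) from $\rho=c_3 r$ to $\rho=r$, Hölder's inequality gives $\int_{c_3 r}^{r}|\partial_\rho(v-u)|^{\beta}\,d\rho\ge(c_2 r/2)^{\beta}\big(r(1-c_3)\big)^{1-\beta}\ge c\,r$; multiplying by $\rho^{N-1}\ge(c_3 r)^{N-1}$ and integrating over the directions $\omega$ gives $\int_{B_r(x_0)}|\nabla(v-u)|^{\beta}\,dx\ge c\,r^{N}$, whence $\lambda(B_r(x_0))\ge c\,r^{N-1}$.

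I expect the main obstacle to be exactly this last part of the lower bound: turning the weak information in $(2)$ into a genuinely linear lower bound $v(x_0)\gtrsim r$ for the comparison function at the free boundary point — which relies on Harnack's inequality and, crucially, on the fact that the contribution of the right-hand side $f$ to $v$ is only of order $r^{1+\delta}$ with $\delta>0$ and can thus be absorbed for $r$ small — and then extracting from the resulting linear ``gap'' $v-u$ the lower bound $\int_{B_r(x_0)}|\nabla(v-u)|^{\beta}\,dx\gtrsim r^{N}$ of the correct order. The remaining ingredients (the upper bound, the comparison $u\le v$, the monotonicity identity, and the standard approximation arguments for the measure terms) are routine.
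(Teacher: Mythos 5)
Your route to the lower bound is genuinely different from the paper's: the paper argues by contradiction, rescaling a hypothetical sequence with $\int_{B_{r_k}(x_k)}d\lambda_k\le\varepsilon_k r_k^{N-1}$, showing that the blow-up limit is $p(x_0)$-harmonic, nonnegative and vanishes at the origin, hence identically zero by Harnack, which contradicts the nondegeneracy inherited from hypothesis (2); you instead run the direct Alt--Caffarelli comparison with the $p(x)$-replacement $v$ in $B_r(x_0)$. Your upper bound is essentially the paper's, and in the lower bound the pieces $u\le v$, the identity $\int(v-u)\,d\lambda=\int\big(|\nabla v|^{p(x)-2}\nabla v-|\nabla u|^{p(x)-2}\nabla u\big)\cdot\nabla(v-u)\,dx$, the bound $0\le v-u\le Cr$, and the Harnack-based estimate $v(x_0)\ge c_2r$ are all sound; when completed, this direct scheme is more quantitative and self-contained than the paper's compactness argument.

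There is, however, a concrete gap: you assert $|\nabla v|\le L_0$ on all of $B_r(x_0)$, citing the interior $C^{1,\alpha}$ estimates of \cite{Fan}. Interior estimates only control $\nabla v$ on compact subsets of $B_r(x_0)$, and with merely Lipschitz boundary data $u|_{\partial B_r(x_0)}$ the replacement need not be Lipschitz up to $\partial B_r(x_0)$ (this already fails for harmonic replacement). The missing bound is used exactly where it cannot be waved away: when $p(x)<2$ the monotonicity inequality only gives the pointwise lower bound $c\,(|\nabla u|+|\nabla v|)^{p(x)-2}|\nabla(v-u)|^2$, so a uniform upper bound on $|\nabla v|$ is needed to reach $c_*|\nabla(v-u)|^{\beta}$ (and also to reduce to a single exponent $\beta$), and your radial integration runs along each ray all the way to $\partial B_r(x_0)$, i.e.\ through the boundary layer where no such bound is available; hence the chain $Cr\,\lambda(B_r(x_0))\ge c_*\int_{B_r(x_0)}|\nabla(v-u)|^{\beta}\,dx\ge c\,r^{N}$ is unjustified whenever $p_{\min}<2$. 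The gap is repairable by standard means: test the equation for $v$ with $v-u$ to get the energy bound $\int_{B_r(x_0)}|\nabla v|^{p(x)}\,dx\le Cr^{N}$; note that your radial argument, applied to $|\nabla(v-u)|$ itself, gives $\fint_{B_r(x_0)}|\nabla(v-u)|\,dx\ge c$ with no exponent manipulation; and in the regions where $p(x)<2$ use the usual splitting $|\nabla(v-u)|^{p(x)}=\big[(|\nabla u|+|\nabla v|)^{p(x)-2}|\nabla(v-u)|^2\big]^{p(x)/2}\,(|\nabla u|+|\nabla v|)^{(2-p(x))p(x)/2}$ together with H\"older, so that the monotonicity integral and the energy bound control $\int|\nabla(v-u)|^{p(x)}$ from below. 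With that modification your argument closes; as written, it does not.
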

\begin{proof}
Let $\xi\in C_0^\infty(\Omega)$, $\xi\geq 0$. Then,
$$\int_{\Omega} \xi d\lambda=-\int |\nabla u|^{p(x)-2}\nabla u \cdot \nabla
\xi\, dx -\int_{\{u>0\}} f  \xi \, dx.$$
Approximating $\chi_{B_r}$ from below by a
sequence  $\{\xi_n\}$ in $C_0^\infty(\Omega)$ such that $0\le\xi_n\le 1$,  $\xi_n=1$ in $B_{r-\frac{1}{n}}$ and
$|\nabla \xi_n|\leq C_N n$ and using that $u$ is locally Lipschitz, we
have that
$$
\begin{aligned}
-\int |\nabla u|^{p(x)-2}\nabla u \cdot \nabla
\xi_n\, dx -\int_{\{u>0\}} f  \xi_n \, dx&\leq C_0 n \big|B_{r}\setminus
B_{r-\frac{1}{n}}\big|+||f||_{{L^{\infty}}(D)}\big|B_{r}\big|\\
&\le C_1 r^{N-1},
\end{aligned}
$$
if $r\le 1$, with $C_0=C_0(p_{\max}, ||\nabla u||_{L{^{\infty}{(D)}}},N)$ and
$C_1=C_1(p_{\max}, ||\nabla u||_{L{^{\infty}{(D)}}}, N, ||f||_{L^{\infty}(D)}).$

Then, as
$$\int_{\Omega} \xi_n d\lambda\to \int_{B_r} d\lambda,$$
the bound from above holds.

Let us now prove the bound from below. Arguing by contradiction we assume that there exists a
sequence of functions $u_k$ satisfying hypotheses $(1)$ and $(2)$ of Definition
\ref{weak2} with power $p_k(x)$ and right hand side $f_k(x)$, with $p_{\min}\le p_k(x)\le p_{\max}$, $||\nabla p_k||_{L^\infty(D)}\le L_1$, $||f_k||_{L^\infty(D)}\le L_2$
and $||\nabla u_k||_{L^{\infty}(D)}\le L_0$, and balls $B_{r_k}(x_k)\subset D$, with $x_k\in
\partial\{u_k>0\}$ and $r_k\to 0$ with $\lambda_k=\Delta_{p_k(x)} u_k- f_k \chi_{\{u_k >0\}}$  satisfying that
$\int_{B_{r_k}(x_k)} d\lambda_k\le \varepsilon_k {r_k}^{N-1}$ with $\varepsilon_k \rightarrow 0$. Let $v_k(x)=\frac{u_k(x_k+r_k x)}{r_k}$.
As the $v_k's$ are
uniformly Lipschitz in $B_1(0)$ and $v_k(0)=0$, we can assume that $v_k\rightarrow v_0$
uniformly in $B_{1/2}$. We can also assume that $x_k\to x_0\in \overline D$.

We have $v_k\ge 0$  and $\Delta_{{\bar p}_k(x)}v_k={\bar f}_k$ in $B_1(0)\cap\{ v_k> 0\}$, with ${\bar p}_k(x)=p_k(x_k+r_k x)$,
${\bar f}_k(x)=r_k f_k(x_k+r_k x)$. We  can assume that ${\bar p}_k \to p_0\in\Bbb R$ uniformly on compact subsets of $B_1(0)$.

We claim that $\nabla v_k\to \nabla v_0$ a.e. in $B_{1/2}$. In fact, on one hand, by the interior H\"older gradient estimates, we have that
$\nabla v_k\to \nabla v_0$ uniformly on compact subsets of $\{v_0>0\}$.

On the other hand, if $B_r(\bar x)\subset\{v_0\equiv 0\}\cap B_{1/2}(0)$, then $B_{r/2}(\bar x)\cap\partial\{ v_k>0\}=\emptyset$ for large $k$ by the nondegeneracy.
So, either $B_{r/2}(\bar x)\subset\{v_k\equiv 0\}$ for a subsequence, or else  $v_k>0$ in  $B_{r/2}(\bar x)$ for large $k$. In any case,
$\nabla v_k\to \nabla v_0$ uniformly in $B_{r/4}(\bar x)$. Now observing that, with the same argument used in Remark \ref{medlebcero}, we get
that $|B_{1/2}(0)\cap\partial\{v_0>0\}|=0$, the claim follows.

Then,  for all $\xi \in C_0^{\infty}(B_{1/2})$, $\xi\geq 0$,
$$-\int_{B_{1/2}}|\nabla v_0|^{p_0-2}\nabla v_0 \cdot
\nabla \xi =\lim_{k \rightarrow \infty} \Big(-\int_{B_{1/2}}  |\nabla
v_k|^{{\bar p}_k(x)-2} \nabla v_k \cdot \nabla \xi - \int_{B_{1/2}}{\bar f}_k \xi\chi_{\{v_k >0\}} \Big).
$$
On the other hand, denoting $\varphi(y)=\xi(\frac{y-x_k}{r_k})$, we have
$$-\int_{B_{1/2}}  |\nabla
v_k|^{{\bar p}_k(x)-2} \nabla v_k \cdot \nabla \xi - \int_{B_{1/2}} {\bar f}_k \xi\chi_{\{v_k >0\}}  = \frac{1}{{r_k}^{N-1}}\int_{B_{{r_k}/2}(x_k)}
\varphi\, d\lambda_k \leq \|\varphi\|_{L^{\infty}(B_{{r_k}/2}(x_k))}  \ep_k \to 0.$$ Therefore $\Delta_{p_0}v_0=0$ in $B_{1/2}$. But $v_0\geq
0$ and  $v_0(0)=0$, so that  by the Harnack inequality we have
$v_0=0$ in $B_{1/2}$.

On the other hand, $0\in \partial\{v_k>0\}$, and by the
nondegeneracy, we have
$$\int_{B_{1/4}} v_k\geq c>0.$$
Thus,
$$\int_{B_{1/4}} v_0\geq c>0$$
which is a contradiction.
\end{proof}

\medskip

The next result gives a representation formula for weak solutions. We
will denote by ${\mathcal H}^{N-1}\lfloor\,\partial\{u>0\}$ the
measure ${\mathcal H}^{N-1}$ restricted to the set $\fb$.

\begin{theo}\label{rep-weak}
Assume that $u$ satisfies  hypotheses $(1)$ and $(2)$ of Definition
\ref{weak2}.  Then,

\item {1)} ${\mathcal
H}^{N-1}(D\cap\partial\{u>0\})<\infty$, for every
$D\subset\subset\Omega$.

\item {2)} There exist a borelian function $q_u$
defined on $\Omega\cap\partial\{u>0\}$  such that
$$
\Delta_{p(x)} u - f \chi_{\{u>0\}}=q_u\, {\mathcal H}^{N-1}\lfloor\,\partial\{u>0\}.
$$

 \item {3)} For every  $D\subset\subset \Omega$
there exist $C>0, c>0$ and $r_1>0$ such that
$$
c r^{N-1}\le{\mathcal H}^{N-1}(B_r(x_0)\cap\fb)\le C r^{N-1}
$$
for balls $B_r(x_0)\subset D$ with $x_0\in D\cap \fb$ and $0<r<r_1$ and, in addition,

\item {4)} $c\le q_u\le C$
 \ in \ $D\cap\partial\{u>0\}$.
\end{theo}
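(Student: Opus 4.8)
The plan is to follow the classical Alt--Caffarelli representation scheme, now combined with the $p(x)$-tools already developed in this section. The starting point is Lemma \ref{weak-radon}, which tells us that $\lambda=\lambda_u=\Delta_{p(x)}u - f\chi_{\{u>0\}}$ is a nonnegative Radon measure supported on $\Omega\cap\partial\{u>0\}$, and Lemma \ref{equivmedida}, which gives the two-sided estimate $c\,r^{N-1}\le \lambda(B_r(x_0))\le C\,r^{N-1}$ for balls centered at the free boundary with $r\le\bar r_0$. The combinatorial/measure-theoretic consequence of such a density estimate is standard: if a Radon measure $\lambda$ satisfies $\lambda(B_r(x_0))\le C r^{N-1}$ for all $x_0$ in a set $F$ and all small $r$, then $\mathcal H^{N-1}\lfloor F$ is absolutely continuous with respect to $\lambda$ (with a bounded density), and conversely the lower bound $\lambda(B_r(x_0))\ge c r^{N-1}$ forces $\mathcal H^{N-1}(F\cap B_r)\le C' r^{N-1}$, hence local finiteness of $\mathcal H^{N-1}$ on the free boundary. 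I would invoke the Besicovitch covering theorem together with the standard comparison lemma between Hausdorff measure and a Radon measure (as in Evans--Gariepy, or exactly as in \cite{AC}, Section 4, and \cite{LW3} for the $p(x)\equiv 2$ case). This yields statement (1) and the upper bound in (3).

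Next, for statement (2) and the lower bound in (3): since $\lambda$ is finite on compact subsets and $\partial\{u>0\}$ has locally finite $\mathcal H^{N-1}$-measure, one shows that $\partial\{u>0\}$ is $(N-1)$-rectifiable and that $\lambda$ is absolutely continuous with respect to $\mathcal H^{N-1}\lfloor\partial\{u>0\}$. The absolute continuity comes again from the density bounds: the upper bound on $\lambda$ gives $\lambda \ll \mathcal H^{N-1}\lfloor\partial\{u>0\}$, and the lower bound on $\lambda$ combined with the upper bound on $\mathcal H^{N-1}$ on small balls gives that the Radon--Nikodym density $q_u=d\lambda/d(\mathcal H^{N-1}\lfloor\partial\{u>0\})$ is bounded below. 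More precisely, at $\mathcal H^{N-1}$-a.e.\ point of $\partial\{u>0\}$ one has, by the differentiation theory for Radon measures,
\begin{equation*}
q_u(x_0)=\lim_{r\to 0}\frac{\lambda(B_r(x_0))}{\mathcal H^{N-1}(B_r(x_0)\cap\partial\{u>0\})},
\end{equation*}
and both numerator and denominator are comparable to $r^{N-1}$ by Lemma \ref{equivmedida} and part (3), whence $c\le q_u\le C$, which is statement (4). To get the pointwise-everywhere lower density bound $\mathcal H^{N-1}(B_r(x_0)\cap\partial\{u>0\})\ge c r^{N-1}$ in (3) (not just a.e.), I would use the density estimate for $\{u>0\}$ from Lemma \ref{densprop2}, which says the positivity set has positive Lebesgue density at every free boundary point; a relative isoperimetric inequality then forces $\mathcal H^{N-1}$ of the free boundary in $B_r$ to be at least $c r^{N-1}$, since the free boundary has Lebesgue measure zero (Remark \ref{medlebcero}) and separates the positivity set from its complement.

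I would organize the proof in the order (1) $\to$ (3 upper) $\to$ (2) $\to$ (4) $\to$ (3 lower), since (2) and (4) are really about the Radon--Nikodym derivative and only need (1) plus the two-sided bound of Lemma \ref{equivmedida}, while the sharp lower bound in (3) is a separate isoperimetric argument using Lemma \ref{densprop2}. The main obstacle, or rather the point requiring the most care, is the passage from the measure-density estimates of Lemma \ref{equivmedida} to the \emph{rectifiability} and the existence of the density $q_u$ as an honest $\mathcal H^{N-1}$-measurable function: one must verify that the abstract differentiation theorem applies, i.e.\ that $\lambda$ and $\mathcal H^{N-1}\lfloor\partial\{u>0\}$ are mutually absolutely continuous with bounded Radon--Nikodym derivatives in both directions, and this is exactly where the two-sided bound $c r^{N-1}\le\lambda(B_r)\le C r^{N-1}$ is essential. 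Since all of this is structurally identical to the $p(x)\equiv 2$, $f\equiv 0$ situation treated in \cite{LW3} (and originally \cite{AC}), once Lemmas \ref{weak-radon}, \ref{densprop2} and \ref{equivmedida} are in hand --- and they have been established above with the $p(x)$-dependence and the right-hand side $f$ already accounted for --- the argument goes through verbatim, with the Lipschitz bound of Proposition \ref{loc-lip} replacing the a priori Lipschitz estimate used in the classical setting.
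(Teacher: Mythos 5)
Your overall scheme is the same as the paper's, which simply invokes the representation theorem of Alt--Caffarelli (Theorem 4.5 in \cite{AC}): the two-sided bound $cr^{N-1}\le\lambda(B_r)\le Cr^{N-1}$ of Lemma \ref{equivmedida}, a covering argument comparing $\lambda$ with $\mathcal H^{N-1}\lfloor\partial\{u>0\}$, and differentiation of measures to produce $q_u$ with $c\le q_u\le C$. Up to and including statement (4) your argument is fine.

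The gap is in your last step, the pointwise lower bound $\mathcal H^{N-1}(B_r(x_0)\cap\partial\{u>0\})\ge cr^{N-1}$ in (3), which you propose to deduce from Lemma \ref{densprop2} plus a relative isoperimetric inequality. The isoperimetric inequality bounds the perimeter of $\{u>0\}$ in $B_r$ from below by $c\min\bigl(|B_r\cap\{u>0\}|,|B_r\cap\{u=0\}|\bigr)^{(N-1)/N}$, so you need a lower volume bound on \emph{both} phases. Lemma \ref{densprop2} gives only the density of $\{u>0\}$; under hypotheses (1) and (2) of Definition \ref{weak2} there is no density bound for $\{u=0\}$ --- that is precisely the extra assumption \eqref{D2} imposed later in Theorem \ref{reg-weak-conflat+dens}. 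Indeed, $u(x)=|x_N|$ with $p\equiv2$, $f\equiv0$ satisfies (1) and (2), yet $|\{u=0\}|=0$, the set $\{u>0\}$ has zero perimeter in every ball, and your isoperimetric argument yields nothing, although the conclusion of (3) still holds. The correct route (and the one underlying \cite{AC}) needs no isoperimetry: once (2) and the upper bound $q_u\le C$ in (4) are established, Lemma \ref{equivmedida} gives
\begin{equation*}
c\,r^{N-1}\le\lambda(B_r(x_0))=\int_{B_r(x_0)\cap\partial\{u>0\}}q_u\,d\mathcal H^{N-1}\le C\,\mathcal H^{N-1}\bigl(B_r(x_0)\cap\partial\{u>0\}\bigr),
\end{equation*}
which is the desired lower bound. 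With this replacement (and noting that your upper bound in (3) already uses both bounds of Lemma \ref{equivmedida} in the Besicovitch covering step), your proof is complete; the rectifiability discussion is not needed for the statement.
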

\begin{proof}
The result follows as Theorem 4.5 in \cite{AC}.
\end{proof}

\begin{rema}\label{perfin}
Assume that $u$ satisfies  hypotheses $(1)$ and $(2)$ of
Definition \ref{weak2}. It follows from Theorem \ref{rep-weak}
that the set $\Omega\cap\{u>0\}$ has finite perimeter locally in
$\Omega$ (see \cite{F} 4.5.11). That is, $\mu_{u}:=-\nabla \chi
_{\{u>0\}}$ is a Borel measure, and the total variation $|\mu_u|$
is a Radon measure. In this situation, we define the reduced
boundary as in \cite{F}, 4.5.5. (see also \cite{EG}) by,
$\partial_{\rm{red}}\{u>0\}:=\{x\in \Omega\cap\partial\{u>0\}/
|\nu_u(x)|=1\}$, where $\nu_u(x)$ is the unit vector with
\begin{equation}\label{carac}
\int_{B_r(x)}|\chi_{\{u>0\}} -\chi_{\{y/\langle y-x,\nu_u(x)\rangle<0\}}|=o(r^{N})
\end{equation}
for $r\to 0$, if
such a vector exists, and $\nu_u(x)=0$ otherwise. By the results
in \cite{F} Theorem 4.5.6, we have
$$\mu_u=\nu_u \H \lfloor \partial_{\rm{red}}\{u>0\}.$$
\end{rema}

\

We also have  the following result on blow up sequences

\begin{lemm}\label{propblowup} Assume that $u$ satisfies  hypotheses $(1)$ and $(2)$ of Definition
\ref{weak2}.
Let $B_{\rho_k}(x_k)\subset\Omega$ be a sequence of balls with
$\rho_k\to 0$, $x_k\to x_0\in \Omega$ and $u(x_k)=0$. Let us consider the blow-up sequence with respect to
$B_{\rho_k}(x_k)$. That is,
$$
u_k(x):=\frac{1}{\rho_k} u(x_k+\rho_k x).
$$
Then, there exists a blow-up
limit $u_0:\R^N\to\R$ such that, for a subsequence,

\begin{enumerate}

\item $u_k\to u_0$ in $C^\alpha_{\rm loc}(\R^N)$ for every $0<\alpha<1$,

\medskip

\item $\partial\{u_k>0\}\to \partial\{u_0>0\}$ locally in
Hausdorff distance,

\medskip
\item $\nabla u_k\to\nabla u_0$ uniformly on compact subsets of
$\{u_0>0\}$,

\medskip

\item $\nabla u_k\to\nabla u_0$ a.e. in $\R^N$,

\medskip

\item If $x_k\in \partial\{u>0\}$, then $0\in
\partial\{u_0>0\}$,
\medskip

\item $\Delta_{p(x_0)} u_0=0$ in $\{u_0>0\}$,

\medskip

\item $u_0$ is Lipschitz continuous and satisfies property (2) of Definition
\ref{weak2} in $\R^N$ with the same constants as $u$ in a ball $B_{\rho_0}(x_0)\subset\subset\Omega$ .
\end{enumerate}
\end{lemm}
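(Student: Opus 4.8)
The plan is to establish the seven assertions essentially in the order listed, exploiting the uniform Lipschitz bound that comes from Proposition \ref{loc-lip}. First I would fix a ball $B_{\rho_0}(x_0)\subset\subset\Omega$; for $k$ large all the balls $B_{\rho_k}(x_k)$ scaled back lie inside it, so by Proposition \ref{loc-lip} the functions $u$ are Lipschitz there with a uniform constant $C$, and hence the rescalings $u_k(x)=\rho_k^{-1}u(x_k+\rho_k x)$ are Lipschitz on every fixed ball $B_R$ with the same constant $C$, and $u_k(x_k-x_k)=u_k(0)=0$ when $u(x_k)=0$. By Arzelà--Ascoli (diagonalizing over $R\to\infty$) a subsequence converges locally uniformly to a Lipschitz function $u_0$ on $\mathbb R^N$; local uniform convergence of Lipschitz functions upgrades automatically to $C^\alpha_{\rm loc}$ convergence for every $\alpha<1$ by interpolation with the uniform Lipschitz bound, giving (1). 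Statement (7) is then immediate: $u_0\ge 0$ is Lipschitz with constant $C$, and property (2) of Definition \ref{weak2} passes to the limit because the $\sup$ over a ball of the rescaled functions converges (upper bound is clear from uniform convergence, lower bound from uniform convergence together with nondegeneracy, or one simply records that $\frac1r\sup_{B_r(y)}u_k$ lies between $c_{\min}$ and $C_{\max}$ for every free boundary point $y$ of $u_k$ and $0<r\le r_0/\rho_k$, and this sandwich survives the limit).

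Next I would prove (6) together with the convergence statements (3) and (4). On $\{u_0>0\}$: if $B_r(\bar x)\subset\subset\{u_0>0\}$, then by local uniform convergence $u_k\ge \delta>0$ on $B_r(\bar x)$ for $k$ large, so $u_k$ solves $\Delta_{\bar p_k(x)}u_k=\bar f_k$ there with $\bar p_k(x)=p(x_k+\rho_k x)\to p(x_0)$ uniformly and $\bar f_k(x)=\rho_k f(x_k+\rho_k x)\to 0$ in $L^\infty$; the interior $C^{1,\alpha}$ estimates of \cite{Fan}, applied on slightly smaller balls, give uniform $C^{1,\alpha}$ bounds, hence $\nabla u_k\to\nabla u_0$ locally uniformly on $\{u_0>0\}$, which is (3), and passing to the limit in the weak formulation (using $\bar p_k\to p(x_0)$ and $\bar f_k\to0$, exactly as in the proof of Lemma \ref{equivmedida}) yields $\Delta_{p(x_0)}u_0=0$ in $\{u_0>0\}$, which is (6). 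For the a.e.\ gradient convergence (4), I would argue as in the proof of Lemma \ref{equivmedida}: on the open set $\{u_0>0\}$ we already have (3); on the interior of $\{u_0\equiv0\}$, any ball $B_r(\bar x)$ compactly contained there is, for large $k$, free of $\partial\{u_k>0\}$ by the nondegeneracy of $u_k$ (hypothesis (2) of Definition \ref{weak2}), so on a smaller ball either $u_k\equiv0$ or $u_k>0$ and solves a homogeneous equation; in both cases $\nabla u_k\to 0=\nabla u_0$ locally uniformly there; and the remaining set $B_R\cap\partial\{u_0>0\}$ has Lebesgue measure zero by the density estimate argument of Remark \ref{medlebcero} applied to $u_0$ (which is licit thanks to (7)). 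This gives (4).

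Finally I would handle the topological convergence (2) and the easy fact (5). For (5): if $x_k\in\partial\{u>0\}$ then $0\in\partial\{u_k>0\}$, and by hypothesis (2) of Definition \ref{weak2}, $\frac1r\sup_{B_r}u_k\ge c_{\min}>0$ for $0<r\le r_0/\rho_k$; letting $k\to\infty$ with $r$ fixed gives $\sup_{B_r}u_0\ge c_{\min}r>0$ for every $r>0$, so $0\notin\{u_0\equiv0\}^\circ$; combined with $u_0(0)=0$ this forces $0\in\partial\{u_0>0\}$. For (2) I would prove the two Hausdorff half-inclusions on any fixed $B_R$. If $y_k\in\partial\{u_k>0\}$ and $y_k\to y$, then from $\sup_{B_r(y_k)}u_k\ge c_{\min}r$ and uniform convergence we get $\sup_{\overline{B_r(y)}}u_0\ge c_{\min}r$ for all small $r$ while $u_0(y)=\lim u_k(y_k)=0$ (using uniform convergence and $u_k(y_k)=0$), so $y\in\partial\{u_0>0\}$; this is one inclusion. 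Conversely, if $y\in\partial\{u_0>0\}$, pick $z_j\to y$ with $u_0(z_j)>0$; for each $j$, by uniform convergence $u_k(z_j)>0$ for $k$ large, and since $u_k(0)$ or some nearby value vanishes — more directly, since $u_0(y)=0$ forces, by the density estimate for $u_0$, points arbitrarily close to $y$ where $u_0=0$, hence (uniform convergence again, via a pigeonhole in $k$) points $w_k\to y$ with $u_k(w_k)=0$ — the segment argument produces free boundary points of $u_k$ converging to $y$. The step I expect to be the main obstacle is precisely this second half-inclusion in (2): controlling $\partial\{u_k>0\}$ from ``outside'' requires combining the uniform nondegeneracy (hypothesis (2)) with the density lower bound of Lemma \ref{densprop2}/Remark \ref{medlebcero} carefully and uniformly in $k$, so that the vanishing set of $u_k$ genuinely accumulates near every point of $\partial\{u_0>0\}$; the other items are comparatively routine consequences of the uniform Lipschitz and $C^{1,\alpha}$ estimates.
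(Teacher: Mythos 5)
Most of your outline is sound and runs along the same lines the paper intends (the paper's proof is a pointer to \cite{AC} 4.7, \cite{ACF} pp.~19--20 and to the arguments already used in Lemma \ref{equivmedida}): uniform Lipschitz bounds from Proposition \ref{loc-lip}, Arzel\`a--Ascoli plus interpolation for (1), interior estimates of \cite{Fan} on $\{u_0>0\}$ for (3) and (6), the dichotomy on balls inside $\{u_0\equiv0\}$ plus $|\partial\{u_0>0\}|=0$ for (4), and scaled nondegeneracy for (5) and for the inclusion ``limits of points of $\partial\{u_k>0\}$ lie in $\partial\{u_0>0\}$''.

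The genuine gap is exactly the step you flag, the second half-inclusion in (2), and the toolkit you propose for it does not close it. Uniform convergence gives you points near $y\in\partial\{u_0>0\}$ where $u_0=0$ (trivially, $y$ itself), but it gives you no zeros of $u_k$: the dangerous scenario is $u_k>0$ throughout a fixed ball $B_\delta(y)$ for infinitely many $k$ while $u_0(y)=0$, and neither the nondegeneracy (which speaks only about balls centered at points of $\partial\{u_k>0\}$, of which there are none in $B_\delta(y)$ in this scenario) nor the density lower bound of Lemma \ref{densprop2}/Remark \ref{medlebcero} (which concerns the positivity set, not the zero set) rules it out. The correct argument is the one the paper already uses in the proof of Lemma \ref{equivmedida}: if $B_\delta(y)\cap\partial\{u_k>0\}=\emptyset$ along a subsequence, then either $u_k\equiv0$ in $B_\delta(y)$, whence $u_0\equiv0$ there, contradicting $y\in\partial\{u_0>0\}$; or $u_k>0$ in $B_\delta(y)$, in which case $\Delta_{p_k(x)}u_k=f_k$ in the whole ball, the uniform interior estimates let you pass to the limit, and $u_0$ is a nonnegative solution of $\Delta_{p(x_0)}u_0=0$ in $B_\delta(y)$ vanishing at the interior point $y$; the Harnack inequality then forces $u_0\equiv0$ in $B_{\delta/2}(y)$, again a contradiction. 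Once (2) is repaired this way, your passage to the limit of the sandwich $c_{\min}\le \frac1r\sup_{B_r}u_k\le C_{\max}$ in item (7) (which needs free boundary points of $u_k$ converging to each free boundary point of $u_0$) also becomes legitimate; as written, (7) is not ``immediate'' after (1) but depends on (2).
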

\begin{proof}
The proof follows with similar ideas to those in \cite{AC}, 4.7 and \cite{ACF}, pp. 19-20. We here use that
$\Delta_{p_k(x)}u_k=f_k$ in $\{ u_k> 0\}$, where $p_k(x)=p(x_k+\rho_k x)$ and $f_k(x)=\rho_k f(x_k+\rho_k x)$
satisfy $p_k\to p(x_0)$ and $f_k \to 0$ uniformly on compact sets of $\R^N$. This implies that
$\nabla u_k$ are uniformly H\"older continuous on compact subsets of $\{u_0>0\}$. (Notice that some of these arguments were already employed
in the proof of Lemma \ref{equivmedida}).
\end{proof}

We will next prove an identification result for the function $q_u$ given in Theorem \ref{rep-weak}, which holds at points $x_0\in
\partial_{\rm{red}}\{u>0\}$ that are Lebesgue points of the function $q_u$ and are such that
\begin{equation}\label{limsupprop}
\limsup_{r\to 0}\frac{\H(\partial\{u>0\}\cap B(x_0,r))}{\H(B'(x_0,r))}\leq
1.
\end{equation}
(Here $B'(x_0,r)=\{x'\in \RR^{N-1}\,/\, |x'|<r\}$).

\smallskip

Notice that under our assumptions, $\H -\,a.e.$ point in
$\partial_{\rm{red}}\{u>0\}$ satisfies \eqref{limsupprop} (see
Theorem 4.5.6(2) in \cite{F}).

\begin{lemm}\label{glambda*}
Assume that $u$ satisfies  hypotheses $(1)$, $(2)$ and  $(3)$ of Definition
\ref{weak2}.  Then, $q_u(x_0)={\lambda^*(x_0)}^{p(x_0)-1}$ for
$\mathcal{H}^{N-1}$ a.e.  $x_0\in
\partial_{\rm{red}} \{u>0\}$.
\end{lemm}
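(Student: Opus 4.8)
The plan is to fix a point $x_0\in\partial_{\rm red}\{u>0\}$ that is simultaneously a Lebesgue point of $q_u$ for the measure $\H\lfloor\partial\{u>0\}$ and satisfies the density bound \eqref{limsupprop}; by the remark following the statement, $\H$-a.e.\ point of $\partial_{\rm red}\{u>0\}$ has these properties, and we may also assume (again $\H$-a.e.) that the asymptotic development \eqref{asym-w} from hypothesis (3) holds at $x_0$. After a rotation we take $\nu_u(x_0)=e_N$, so that $\chi_{\{u>0\}}\to\chi_{\{x_N<0\}}$ in $L^1_{\rm loc}$ under the blow-up at scale $r\to 0$. First I would consider the blow-up sequence $u_r(x)=\frac1r u(x_0+rx)$ and invoke Lemma \ref{propblowup}: along a subsequence $u_r\to u_0$ in $C^\alpha_{\rm loc}$, with $\nabla u_r\to\nabla u_0$ a.e., $\partial\{u_r>0\}\to\partial\{u_0>0\}$ in Hausdorff distance, $0\in\partial\{u_0>0\}$, $u_0$ Lipschitz, $\Delta_{p(x_0)}u_0=0$ in $\{u_0>0\}$, and $u_0$ nondegenerate. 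Combining the convergence of the sets $\{u_r>0\}$ with $\chi_{\{u>0\}}\to\chi_{\{x_N<0\}}$ forces $\{u_0>0\}=\{x_N<0\}$, and then the only nonnegative $p(x_0)$-harmonic function in the half-space that vanishes on $\{x_N=0\}$, is Lipschitz, and is nondegenerate is the linear profile $u_0(x)=\gamma\, x_N^-$ for some constant $\gamma>0$. By the asymptotic development \eqref{asym-w} at $x_0$ one identifies $\gamma=\lambda^*(x_0)$.

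Next I would pass this blow-up information through the representation in Theorem \ref{rep-weak}. Write $\mu_k:=\Delta_{p_k(x)}u_k-f_k\chi_{\{u_k>0\}}$ for the rescaled measures, where $p_k(x)=p(x_0+r_kx)\to p(x_0)$ and $f_k(x)=r_kf(x_0+r_kx)\to 0$ uniformly on compacts; the scaling gives $\int_A d\mu_k=r_k^{1-N}\int_{x_0+r_kA}d\lambda$. Testing against $\varphi\in C_0^\infty(B_1)$ and using $\nabla u_k\to\nabla u_0$ a.e.\ together with the uniform Lipschitz bound (dominated convergence) yields
\[
\int\varphi\,d\mu_k\;\longrightarrow\;-\int|\nabla u_0|^{p(x_0)-2}\nabla u_0\cdot\nabla\varphi\,dx
=\gamma^{p(x_0)-1}\int_{\{x_N=0\}}\varphi\,d\H,
\]
the last equality being the distributional computation of $\Delta_{p(x_0)}(\gamma x_N^-)$. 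Hence $\mu_k\rightharpoonup \gamma^{p(x_0)-1}\,\H\lfloor\{x_N=0\}$ weakly-$*$ as measures on $B_1$. On the other hand, the scaling of the representation $\lambda=q_u\,\H\lfloor\partial\{u>0\}$ together with $x_0$ being a Lebesgue point of $q_u$ and the bound \eqref{limsupprop} (which controls the $\H$-measure of $\partial\{u_k>0\}$ in $B_1$ from above by essentially that of the disk $B_1'$, preventing mass from escaping) shows $\mu_k\rightharpoonup q_u(x_0)\,\H\lfloor\{x_N=0\}$. Comparing the two limits on $B_1$ gives $q_u(x_0)=\gamma^{p(x_0)-1}=\lambda^*(x_0)^{p(x_0)-1}$, as desired.

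The main obstacle is the second identification, namely that the rescaled measures $\mu_k=\lambda_{u_k}$ converge weakly-$*$ to exactly $q_u(x_0)\,\H\lfloor\{x_N=0\}$ with no loss or gain of mass. The lower bound $\liminf\mu_k(B_\rho')\ge q_u(x_0)\,\omega_{N-1}\rho^{N-1}$ follows from $x_0$ being a Lebesgue point of $q_u$ together with the lower $\H$-density estimate in Theorem \ref{rep-weak}(3); the delicate point is the matching upper bound, i.e.\ ruling out concentration of $\mu_k$ along a thicker-than-$(N-1)$-dimensional set or leakage of mass to the lateral part of $\partial B_1$. This is precisely where \eqref{limsupprop} is used: it forces $\H(\partial\{u>0\}\cap B_r(x_0))/\H(B_r'(x_0))\to 1$ (the $\liminf\ge 1$ being automatic from $x_0\in\partial_{\rm red}$), so that the $\H$-mass of the free boundary in small balls is asymptotically that of the flat disk, and combined with the Hausdorff convergence $\partial\{u_k>0\}\to\{x_N=0\}$ and the upper bound $q_u\le C$ this pins $\mu_k$ down to the flat limit. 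Once this is in hand the identification is immediate. One should also note that the whole sequence converges (not just a subsequence) because the limit $\lambda^*(x_0)^{p(x_0)-1}$ is independent of the subsequence chosen, so no extra argument is needed there.
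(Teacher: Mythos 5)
Your proposal is correct and follows essentially the same route as the paper: blow up at an $\H$-a.e.\ point of $\partial_{\rm{red}}\{u>0\}$ that is a Lebesgue point of $q_u$ satisfying \eqref{limsupprop}, pass to the limit in the representation formula of Theorem \ref{rep-weak} using the a.e.\ convergence of gradients from Lemma \ref{propblowup}, and equate the two limiting expressions to get $q_u(x_0)=\lambda^*(x_0)^{p(x_0)-1}$. The only (harmless) detour is your appeal to a Liouville-type classification of nonnegative $p(x_0)$-harmonic functions in a half-space to obtain $u_0=\gamma\,x_N^-$: the asymptotic development \eqref{asym-w} from hypothesis (3) already identifies the blow-up limit as $\lambda^*(x_0)x_N^-$ directly, which is exactly what the paper does.
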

\begin{proof}
If $u$ satisfies (3) of Definition \ref{weak2}, take $x_0\in
\partial_{\rm{red}}\{u>0\}$ such that
$$
u(x)=\lambda^*(x_0)\langle x-x_0,\nu(x_0)\rangle^-+o(|x-x_0|),
$$
where $\nu(x_0)$ is the exterior unit normal at $x_0$ in the measure theoretic sense. We assume $\nu(x_0)=e_N$.
Take
$\rho_k\to 0$ and $u_k(x)=\frac{1}{\rho_k}u(x_0+\rho_k x).$  If
$\xi\in C_0^{\infty}(\Omega)$ we have
$$-\int_{\{ u>0\}} |\nabla u|^{p(x)-2} \nabla u\cdot \nabla \xi\, dx -\int_{\{ u>0\}} f \xi\, dx =\int_{\partial \{u>0\}} q_u(x)  \xi d\H,$$ and
if we replace $\xi$ by $\xi_k(x)=\rho_k\xi(\frac{x-x_0}{\rho_k} )$
with $\xi\in C_0^{\infty}(B_R)$, $k\geq k_0$ and we change
variables, we obtain
$$
-\int_{\{ u_k>0\}} |\nabla u_k|^{p_k(x)-2}\nabla u_k\cdot \nabla \xi\, dx-\int_{\{ u_k>0\}} f_k \xi\, dx=\int_{\partial \{u_k>0\}} q_u(x_0+\rho_k x)
\xi d\H,
$$
where $p_k(x)=p(x_0+\rho_k x)$ and $f_k(x)=\rho_k f(x_0+\rho_k x)$.
{}From Lemma \ref{propblowup}, it follows that,  for a subsequence, $u_k\to u_0$ uniformly on compact sets of $\R^N$,
with $u_0(x)=\lambda^*(x_0)x_N^-$ and moreover,
$ |\nabla u_k|^{p_k(x)-2} \nabla u_k \to |\nabla u_0|^{p_0-2} \nabla u_0$ a.e. in $\RR^N$, with $p_0=p(x_0)$. Thus,
$$
-\int_{\{ u_k>0\}} |\nabla u_k|^{p_k(x)-2}\nabla u_k\cdot \nabla \xi\, dx-\int_{\{ u_k>0\}} f_k \xi\, dx\to
-\int_{\{x_N<0\}}|\nabla u_0|^{p_0-2} \nabla u_0 \cdot \nabla \xi\, dx.
$$

We now let
 $$\xi(x)=\min\big(2(1-|x_N|)^+,1\big)
 \eta(x_1,...,x_{N-1}),$$
 for $|x_N|\le 1$ and $\xi=0$ otherwise,
where $\eta\in C_0^{\infty}(B_r')$, (where $B'_{r}$ is a ball
$(N-1)$ dimensional with radius $r$) and $\eta\geq 0$. Then, if
$x_0$ is a Lebesgue point of $q_u$ satisfying \eqref{limsupprop},
 we proceed
as in \cite{AC}, p.121 and we get
\begin{equation}\label{convergencia}
\int_{\partial \{u_k>0\}} q_u(x_0+\rho_k x) \xi\, d\H\rightarrow
q_u(x_0)\int_{\{x_N=0\}} \xi \,d\H.
\end{equation}

As $\nabla u_0=-\lambda^*(x_0) e_N \chi_{\{x_N<0\}}$, it follows that
$$
\lambda^*(x_0)^{p_0-1}\int_{B_r'}\xi(x',0)\,d\mathcal{H}^{N-1}=
q_u(x_0)\int_{ B'_{r}} \xi(x',0)
\,d\mathcal{H}^{N-1}.
$$
Thus, we deduce that for $\mathcal{H}^{N-1}$-almost every point
$x_0 \in \partial_{\rm{red}}\{u>0\}$,
$q_u(x_0)={\lambda^*(x_0)}^{p(x_0)-1}$.
\end{proof}

\end{section}
%%%%%%%%%%end section weak solutions%%%%%%%%%%%%%%%%%%%%%%%%%%%%%%%%%%
%%%%%%%%%%%%%%%%%begin section flat free boundary points%%%%%%%%%%%%%%%%%%%%%%%
\begin{section}{Flat free boundary points}
\label{sect-flat-fbpoints}

\setcounter{equation}{0}

In this section we study the behavior of weak solutions to the free boundary problem \ref{bernoulli-px} near ``flat" free boundary points.

\smallskip

Throughout the section we assume, unless otherwise stated, that $f$ is bounded, $p(x)$ is Lipschitz continuous and $\lambda^*(x)$ is H\"older continuous.

\medskip

As in previous papers, we start by defining   the flatness
classes.

\begin{defi}\label{def-flat} Let $0<\sigma_1,\sigma_2\le 1$, $\tau>0$. We say that $u$ belongs to the class $F(\sigma_1,\sigma_2;\tau)$ in $B_\rho(x_0)$ in direction $\nu$ with power $p(x)$, slope $\lambda^*(x)$ and right hand side $f(x)$ if $u$ is a weak solution to the free boundary problem \ref{bernoulli-px} in $B_\rho(x_0)$, $x_0\in\partial\{u>0\}$ and
\begin{enumerate}
\item $u(x)=0$ if $\langle x-x_0,\nu\rangle\ge\sigma_1\rho$, $x\in
B_\rho(x_0)$,

\item \label{item-flat-2}$u(x)\ge-\lambda^*(x_0)\big(\langle
x-x_0,\nu\rangle+\sigma_2\rho\big)$ if $\langle
x-x_0,\nu\rangle\le -\sigma_2\rho$, $x\in B_\rho(x_0)$,

    \item $|\nabla u|\le \lambda^*(x_0)(1+\tau)$ in $B_\rho(x_0)$.

\end{enumerate}

\end{defi}

After a rotation and a translation we may assume that $x_0=0$ and
$\nu=e_N$.
 We will not explicitly mention the direction of flatness when $\nu=e_N$.

 We may further reduce the analysis to the unit ball by the following transformations:
\begin{equation}\label{reduction}
\bar u(x)=\frac{u(\rho x)}\rho,\qquad \bar p(x)=p(\rho x),\qquad
\bar\lstar(x)=\lstar(\rho x),\qquad \bar f(x)=\rho f(\rho x).
\end{equation}

Then, if $u\in F(\sigma_1,\sigma_2;\tau)$ in $B_\rho$ with power
$p$, slope $\lambda^*$ and right hand side $f$, there holds that
$\bar u\in F(\sigma_1,\sigma_2;\tau)$ in $B_1$ with power $\bar
p$, slope $\bar \lambda^*$ and right hand side $\bar f$.

Observe that, if $1<p_{\min}\le p(x)\le p_{\max}<\infty$, $0<\lone\le
\lstar(x)\le \ltwo<\infty$, $p\in Lip$ with $|\nabla p|\le
L_1$, $\lstar\in C^{\alpha^*}$ with
$[\lstar]_{C^{\alpha^*}(B_\rho)}\le C^*$ and $f\in
L^\infty(B_\rho)$ with $|f|\le L_2$, there holds that $\bar p$,
$\bar\lstar$ and $\bar f$ are in similar spaces in $B_1$ and
$1<p_{\min}\le \bar p(x)\le p_{\max}<\infty$, $0<\lone\le \bar\lstar(x)\le
\ltwo<\infty$, $|\nabla\bar p|\le L_1\rho$, $|\bar f|\le L_2\rho$
and $[\bar\lstar]_{C^{\alpha^*}(B_1)}\le C^*\rho^{\alpha^*}$.

\bigskip

The first lemma states that, if $u$ vanishes for $x_N\ge\sigma$,
there holds that, in a smaller ball, $u$ is above a hyperplane for
$x_N\le -\ep$.
\begin{lemm}\label{lema 1} Let $p\in Lip(B_1)$, $\lstar\in C^{\alpha^*}(B_1)$, $f\in L^\infty(B_1)$ with $|\nabla p|\le L_1\rho$,
$|f|\le L_2\rho$, $[\lstar]_{C^{\alpha^*}(B_1)}\le C^*\rho^{\alpha^*}$ and $C^*\rho^{\alpha^*}\le\lstar(0)\sigma$. Let $u\in F(\sigma,1;\sigma)$ in $B_1$
with power $p$, slope $\lstar$ and rhs $f$.

Let $0<\ep\le1/2$ and $\frac12\le R<1$. There exists
$\sigma_0=\sigma_0(\ep, N, R, p_{\min}, p_{\max}, \lone, \ltwo, L_1, L_2, C^*)$ such
that if   $\sigma\le \sigma_0$ there holds that $u\in
F(\sigma/R,\ep;\sigma)$ in $B_{R}$ with the same power, slope and
rhs.

\end{lemm}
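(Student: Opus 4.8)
The plan is to argue by contradiction using a compactness/blow-up argument, which is the standard route for such "improvement of flatness into the vertical variable" lemmas (cf.\ \cite{AC}, \cite{ACF}, \cite{DP1}). Fix $\ep$ and $R$. Suppose the conclusion fails: then there is a sequence $\sigma_k\to 0$, functions $p_k, \lstar_k, f_k$ satisfying the stated bounds with $\rho=1$ (so $|\nabla p_k|\le L_1$, $|f_k|\le L_2$, $[\lstar_k]_{C^{\alpha^*}(B_1)}\le C^*$, the normalization $C^*\le\lstar_k(0)\sigma_k$ being automatically consistent for $k$ large after a harmless rescaling, or one keeps the $\rho$ dependence explicit), and $u_k\in F(\sigma_k,1;\sigma_k)$ in $B_1$ with power $p_k$, slope $\lstar_k$, rhs $f_k$, such that $u_k\notin F(\sigma_k/R,\ep;\sigma_k)$ in $B_R$. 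Since $u_k$ already satisfies items (1) and (3) of the flatness definition with $\sigma_k/R\ge\sigma_k$ and $|\nabla u_k|\le\lstar_k(0)(1+\sigma_k)$ in $B_R$, the only way membership can fail is through item \eqref{item-flat-2}: for each $k$ there is a point $y_k$ with $(y_k)_N\le-\ep$, $y_k\in B_R$, and $u_k(y_k)<-\lstar_k(0)\big((y_k)_N+\ep\big)$, i.e.\ $u_k(y_k)<\lstar_k(0)\big(|(y_k)_N|-\ep\big)$ (note the right side is nonnegative). Pass to subsequences so that $p_k\to p_0$ uniformly (a constant, by the vanishing gradient bound — or at worst a limit with $|\nabla p_0|\le\limsup|\nabla p_k|$; in fact $|\nabla p_k|\le L_1$ only, so $p_0$ is merely Lipschitz, but what matters is that on $B_R$ the operators converge), $\lstar_k(0)\to\mu\in[\lone,\ltwo]$, $\lstar_k\to\lstar_0$ uniformly, $f_k\to 0$ in the sense that $\|f_k\|_\infty$ stays bounded, and $y_k\to y_0$ with $(y_0)_N\le-\ep$.

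The next step is to extract a limit of the $u_k$. The functions $u_k$ are uniformly bounded in $B_1$ (by item (1), $u_k=0$ for $(x)_N\ge\sigma_k$, and then by the Lipschitz-type estimate from Proposition \ref{loc-lip}, since they are weak solutions with uniformly controlled data and with $\sup_{B_r(x)}u_k\le C_0 r$ near the free boundary — this last bound follows from item (1) together with the uniform gradient bound, or more directly from the flatness), hence uniformly Lipschitz on $B_{R'}$ for any $R<R'<1$. So, up to a further subsequence, $u_k\to u_0$ uniformly on $B_R$, with $u_0$ Lipschitz, $u_0\ge 0$, $u_0=0$ on $\{x_N\ge 0\}\cap B_R$ (since $\sigma_k\to 0$), and $|\nabla u_0|\le\mu$. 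The crucial point is that $u_0$ is $p(x_0)$-harmonic — in fact $\mu$-homogeneous-linear — in the lower half ball: on $\{x_N<0\}\cap B_R$, where by item \eqref{item-flat-2} the $u_k$ are bounded below by a fixed negative hyperplane hence eventually positive on compact subsets (here $\sigma_2=1$ is used: for $x_N\le -\sigma_k$, $u_k(x)\ge-\lstar_k(0)(x_N+\sigma_k)>0$), the equation $\Delta_{p_k(x)}u_k=f_k$ holds, and passing to the limit (using monotonicity of $A(x,\xi)=|\xi|^{p(x)-2}\xi$ and the a.e.\ convergence of gradients on the positivity set, as in Lemma \ref{equivmedida} and Lemma \ref{propblowup}) gives $\Delta_{p_0}u_0=0$ in $\{x_N<0\}\cap B_R$. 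Moreover from item (1) ($u_k\equiv 0$ above $x_N=\sigma_k$) and item \eqref{item-flat-2} and the uniform Lipschitz bound, $u_0$ vanishes on $\{x_N\ge 0\}$ and the measure-theoretic asymptotics plus the flatness force $\nabla u_0\cdot e_N\to$ a definite value at $x_N=0^-$; by a classical boundary-point / Hopf-type argument for the (constant-coefficient) $p_0$-Laplacian — or directly because $u_0$ is $p_0$-harmonic, nonnegative, vanishing on a hyperplane portion, with $|\nabla u_0|\le\mu$, and has the correct asymptotic slope inherited from property (3)/(4) of being a weak solution — one gets $u_0(x)=\mu\, x_N^-$ on $B_R$. (Alternatively, one shows $|\nabla u_0|\le\mu$ together with nonnegativity, $p_0$-harmonicity in the half-ball, and zero boundary values forces exactly the linear profile by the strong maximum principle / uniqueness, exactly as in \cite{AC}.)

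Finally I derive the contradiction: evaluating the limit at $y_0$, $u_0(y_0)=\mu\,|(y_0)_N| \ge \mu\,\ep$, while passing to the limit in $u_k(y_k)<\lstar_k(0)\big(|(y_k)_N|-\ep\big)$ gives $u_0(y_0)\le\mu\big(|(y_0)_N|-\ep\big)=\mu\,|(y_0)_N|-\mu\,\ep$, i.e.\ $\mu\,|(y_0)_N|\le\mu\,|(y_0)_N|-\mu\,\ep$, which is impossible since $\mu\ge\lone>0$ and $\ep>0$. This completes the contradiction and proves the lemma.

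The main obstacle I anticipate is the compactness step establishing that the limit $u_0$ is exactly the linear profile $\mu\,x_N^-$ rather than merely a nonnegative $p_0$-harmonic function in the half-ball vanishing on the hyperplane: one must use the uniform gradient bound $|\nabla u_k|\le\lstar_k(0)(1+\sigma_k)$ (item (3), giving $|\nabla u_0|\le\mu$ in the limit) together with the lower barrier from item \eqref{item-flat-2} to pin the normal derivative at $x_N=0^-$ to be exactly $\mu$, invoking the strong comparison principle for the $p_0$-Laplacian. Care is also needed in the passage to the limit in the PDE, because the free boundaries $\partial\{u_k>0\}$ are only known to lie in the slab $\{|x_N|\le\sigma_k\}\to\{x_N=0\}$; this is handled exactly as in Lemma \ref{equivmedida} and Lemma \ref{propblowup}, using nondegeneracy (hypothesis (2) of Definition \ref{weak2}) to rule out "holes" in $\{u_k>0\}$ inside the lower half-ball and to guarantee a.e.\ convergence of the gradients.
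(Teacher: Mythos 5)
Your compactness scheme has a genuine gap, and it begins with a misreading of the hypothesis. You claim that for $x_N\le-\sigma_k$ item \eqref{item-flat-2} gives $u_k(x)\ge-\lstar_k(0)(x_N+\sigma_k)>0$, ``here $\sigma_2=1$ is used''. But in Definition \ref{def-flat} the lower bound in \eqref{item-flat-2} is only required on the set $\{\langle x-x_0,\nu\rangle\le-\sigma_2\rho\}$; with $\sigma_2=1$ and $\rho=1$ that set is empty in $B_1$, so the hypothesis $u\in F(\sigma,1;\sigma)$ carries \emph{no} lower bound below the hyperplane --- only vanishing above $x_N=\sigma$ and the gradient bound. (That is exactly why the lemma is nontrivial: it must manufacture flatness from below out of flatness from above.) Consequently your blow-up limit $u_0$ is not known to be positive in $B_R^-$, the equation $\Delta_{p_k(x)}u_k=f_k$ is only available on $\{u_k>0\}$, and the identification $u_0=\mu\,x_N^-$ collapses: even granting $p_0$-harmonicity in the half ball, a nonnegative $p_0$-harmonic function vanishing on $\{x_N=0\}\cap B_R$ with $|\nabla u_0|\le\mu$ need not be linear (for $p_0=2$ take a small multiple of $-x_N(1+x_1)$), and $u_0\equiv0$ is not excluded either, so no contradiction at $y_0$ follows.

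The second obstruction concerns uniformity of constants. To control the limit you invoke nondegeneracy, i.e.\ hypothesis $(2)$ of Definition \ref{weak2}, but the constants $c_{\min}(D)$, $r_0(D)$ there depend on the particular weak solution, whereas the $\sigma_0$ of the lemma must depend only on $\ep, N, R, p_{\min}, p_{\max}, \lone, \ltwo, L_1, L_2, C^*$; along a contradiction sequence of different solutions $u_k$ these constants may degenerate, so compactness cannot deliver a uniform $\sigma_0$. This is precisely why the paper's proof avoids compactness of the solutions altogether: it is a direct comparison argument in a single ball, solving an auxiliary Dirichlet problem $v$ in $D\setminus\overline{B_r(\xi)}$, comparing $u\le v$, building an explicit barrier $w=v_1-\kappa\sigma v_2$ (exponentials of distance-type functions, supersolutions of the linearized operator ${\mathcal T}$), and then contradicting the free-boundary condition $(4)$ of Definition \ref{weak2} at a touching point $z$ through $\lstar(0)(1-\sigma)\le|\nabla v(z)|\le|\nabla w(z)|\le\lstar(0)\big[1+C_3\sigma-c\kappa\sigma\big]$ with $\kappa$ large; only pointwise structural properties of one weak solution enter, so all constants are universal. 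To salvage a compactness proof you would first need uniform positivity and linear growth below the free boundary with universal constants --- which is essentially what the barrier construction is there to provide.
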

\begin{proof} We follow the construction of \cite{ACF} with the variation of  \cite{DP1}. In this paper, we consider an arbitrary $R$ instead of $R=1/2$ in order to pursue the argument in the next steps.

Let $R'=R+(1-R)/4$. As in these  papers, we will prove that, for every
$0<r\le (1-R)/8$  there exists
$\sigma_0=\sigma_0(r,R,p_{\min},p_{\max},\lone,\ltwo, L_1,L_2,C^*)$ such
that for $\sigma\le\sigma_0$,
\begin{equation}\label{ineq-xi}
u(\xi)\ge\lstar(0)[-\xi_N-4r] \quad\mbox{for}\quad \xi\in\partial
B_{R'}\mbox{ with }\xi_N\le -\frac{(1-R)}{4}.
\end{equation}
Then, integrating along vertical lines a distance at most $R'$ and
using that $|\nabla u|\le \lstar(0)(1+\sigma)$, we get
\[
\begin{aligned}
u(\xi',\xi_N+\alpha)&\ge u(\xi)-\lstar(0)(1+\sigma)\alpha\\
&\ge \lstar(0)\big[-(\xi_N+\alpha)-4r-R'\sigma\big]\\
&\ge \lstar(0)\big[-(\xi_N+\alpha)-\ep R\big]
\end{aligned}
\]
if $0\le\alpha\le R'$, $r=\min\{\frac{R\ep}{8},\frac{1-R}{8}\}$
and $\sigma\le \min\{\frac{R\ep}{R+1}, \sigma_0\}$.

This implies that, for $|x|<R$, $x_N\le -R\ep$,
\[
u(x)\ge -\lstar(0)\big(x_N+R\ep\big).
\]
So that $u\in F(\sigma/R,\ep;\sigma)$ in $B_{R}$ with power $p$, slope $\lstar$ and rhs $f$, and the lemma
will be proved.

\medskip

In order to prove \eqref{ineq-xi}, we will show that, once we fix
$0<r\le\frac{(1-R)}{8}$ there exists $\kappa>0$ such that, for
every $\xi\in \partial B_{R'}$ with $\xi_N\le-(1-R)/4$, there
exists $x_\xi\in\partial B_r(\xi)$ such that
\begin{equation}\label{ineq-xxi}
u(x_\xi)\ge-\lstar(0)(1-\kappa\sigma) {x_\xi}_N.
\end{equation}
Then, by using again that $|\nabla u|\le \lstar(0)(1+\sigma)$,
\[\begin{aligned}
u(\xi)&\ge u(x_\xi)-\lstar(0)(1+\sigma)r\ge \lstar(0)[-(1-\kappa\sigma){x_\xi}_N-(1+\sigma)r]\\
&\ge \lstar(0)[-\xi_N-r-\kappa\sigma-2r]\ge\lstar(0)[-\xi_N-4r]
\end{aligned}
\]
 if $\sigma\le\frac r\kappa $, that is, we get \eqref{ineq-xi}.

The existence of a point $x_\xi$ satisfying \eqref{ineq-xxi} is
done by assuming that such a point does not exist and getting a
contradiction if $\kappa$ is large depending on $r,R$ and the
constants in the structure conditions. The inequality that will
allow to get this contradiction will be achieved if $\sigma$ is
small depending on  the same parameters. Such inequality comes
from the construction of two barriers in the following way:

Let $\eta\in C_0^\infty(B_1')$ given by
\[
\eta(y)=\begin{cases}{\rm exp}\Big(-\frac{9|y|^2}{1-9|y|^2}\Big)\quad&\mbox{if}\quad|y|<\frac13,\\
0\quad&\mbox{if}\quad|y|\ge\frac13.
\end{cases}
\]

Let $s\ge0$ be maximal such that
\[
B_1\cap\{u>0\}\subset D:=\{x\in B_1:x_N<\sigma-s\eta(x')\}.
\]

Then, as $0\in\partial\{u>0\}$ there holds that $s\le \sigma$.

First, we let $v\in W^{1,p(\cdot)}(D\setminus \overline{B_r(\xi)})$ be the solution to
\begin{equation}\label{eq-v}
\begin{cases}
\Delta_{p(x)}v=-L_2\rho\quad&\mbox{in}\quad D\setminus \overline{B_r(\xi)},\\
v=0\quad&\mbox{on}\quad \partial D\cap B_1,\\
v=\lstar(0)(1+\sigma)(\sigma-x_N)\quad&\mbox{on}\quad\partial D\setminus B_1,\\
v=-\lstar(0)(1-\kappa\sigma)x_N\quad&\mbox{on}\quad \partial
B_r(\xi).
\end{cases}
\end{equation}
Since the boundary datum coincides with $\lstar(0)(1+\sigma)(\sigma-x_N-s\eta(x'))$ on $\partial D$, it has an extension 
$\phi\in W^{1,\infty}(D\setminus \overline{B_r(\xi)})$ and therefore the solution $v$ exists by a minimization argument
in $\phi+W_0^{1,p(\cdot)}(D\setminus \overline{B_r(\xi)})$.

As we are assuming that \eqref{ineq-xxi} does not hold for
any $x_\xi\in\partial B_r(\xi)$ and, since $u=0$ if $x\in\partial
D\cap B_1$ and $|\nabla u|\le \lstar(0)(1+\sigma)$, there holds that
$u\le v$ on $\partial (D\setminus \overline{B_r(\xi)})$. Now, recalling Lemma \ref{weak-radon}, we get 
$\Delta_{p(x)} u \ge f \chi_{\{u >0\}}\ge -L_2\rho$, then comparison of weak sub- and super-solutions gives
\[
u\le v\quad\mbox{in}\quad D\setminus \overline{B_r(\xi)}.
\]

Now, let $z\in\partial D\cap\partial\{u>0\}\cap\{|z'|<1/3\}$.
Then, there exists a ball $B$ contained in $\{u=0\}$ such that
$z\in\partial B$. By the definition of weak solution and, since
$\lstar(z)\ge\lstar(0)-C^*\rho^{\alpha^*}|z|^{\alpha^*}\ge
\lstar(0)(1-\sigma)$, we deduce that
\begin{equation}\label{ineq-lstar}
\lstar(0)(1-\sigma)\le \lstar(z)\le\limsup_{\stackrel{x\to
z}{u(x)>0}} \frac{u(x)}{\mbox{dist}(x,B)}\le |\nabla v(z)|.
\end{equation}

We will get a contradiction once we find a barrier from above for
$v$ in the form $w=v_1-\kappa\sigma v_2$ with $|\nabla v_1|\le
\lstar(0)(1+C_3\sigma)$, $|\nabla v_2|\ge c\lstar(0)>0$, $v_1>0$,
$v_2>0$ close to $z$ and $v_1=v_2=0$ on $\partial D\cap B_1$ close
to $z$. In fact, if such a barrier $w$ exists, by
\eqref{ineq-lstar} there holds that
\[
\lstar(0)(1-\sigma)\le|\nabla v(z)|\le |\nabla w(z)|=|\nabla
v_1(z)|-\kappa\sigma|\nabla v_2(z)|\le
\lstar(0)\big[1+C_3\sigma-c\kappa\sigma\big]
\]
and this is a contradiction  if $\kappa$ is large depending only
on $C_3$ and $c$. Since the constants $C_3$ and $c$ will depend
only on $r,R,p_{\min},p_{\max},\lone,\ltwo, L_1,L_2$ and $C^*$, the lemma
will be proved.

As in \cite{DP1} and \cite{FBMW}, the idea of the construction of
$v_1$ and $v_2$ is that they will be such that $w=v_1-\kappa\sigma
v_2$ will satisfy
\begin{equation}\label{gradiente w}
\frac{\lstar(0)}2\le |\nabla w|\le 2\lstar(0)
\end{equation}
if $\sigma$ is small depending on those constants. Then,
\[
\Delta_{p(x)}w=|\nabla
w|^{p(x)-2}\Big[\sum_{ij}b_{ij}(x)w_{x_ix_j}+\sum_jb_j(x)w_{x_j}\Big]
\]
with $b_{ij}=\delta_{ij}+(p(x)-2)\frac{w_{x_i}w_{x_j}}{|\nabla
w|^2}$ and $b_j=p_{x_j}\log|\nabla w|$. There holds that
\begin{equation}\label{bij}
\beta_1|\nu|^2\le \sum_{ij}b_{ij}\nu_i\nu_j\le
\beta_2|\nu|^2\quad\forall \nu\in\R^N
\end{equation}
with $\beta_1=\min\{1,p_{\min}-1\}$, $\beta_2=\max\{1,p_{\max}-1\}$ and,
with  $\Lambda=\max\{|\log\lone|,|\log\ltwo|\}+\log 2$,
$b=(b_1,\cdots,b_N)$,
\begin{equation}\label{bj}
|b|\le \Lambda L_1\rho\le\frac{\Lambda L_1\ltwo}{C^*}\sigma=
C_0\sigma,
\end{equation}
if $\sigma\le \frac{C^*}{\ltwo}$, with $C_0=\frac{\Lambda
L_1\ltwo}{C^*}$.

Thus, the idea is to construct $v_1$ in such a way that
\[
\frac23\lstar(0)\le |\nabla v_1|\le \frac32\lstar(0)
\]
and
\[
{\mathcal T}v_1\le -S^{-1}L_2\frac{\ltwo}{C^*}\sigma=-M\sigma \quad {\rm in } \quad D,
\]
with
$S=\min\{\big(\frac{\lone}2\big)^{p_{\min}-2},\big(\frac{\lone}2\big)^{p_{\max}-2},(2\ltwo)^{p_{\min}-2},
(2\ltwo)^{p_{\max}-2}\}$ for any operator
\[
{\mathcal
T}=\sum_{ij}b_{ij}(x)\partial_{x_ix_j}+\sum_jb_j(x)\partial_{x_j}
\]
with $\{b_{ij}\}$  satisfying \eqref{bij} with
$\beta_1=\min\{1,p_{\min}-1\}$, $\beta_2=\max\{1,p_{\max}-1\}$  and
$\{b_j\}$ satisfying
\[
|b|\le C_0\sigma
\]
with $C_0$ the constant in \eqref{bj}.

Then, $v_2$ will be a function satisfying
\[
{\mathcal T}v_2\ge 0\quad\mbox{in}\quad\widetilde D\setminus
B_r(\xi)
\]
for any such an operator $\mathcal T$ with
\[
0<c\lstar(0)\le |\nabla v_2|\le C\lstar(0)
\]
for some constants $c,C$ depending only on $R,r$. Here $\widetilde
D$ is a smooth domain contained in $D$ and containing $D\setminus
B_{(1-R)/10}(\partial {B_1'}\times\{0\})$. In this way, once we
fix $\kappa>0$ there holds that  $w$ satisfies \eqref{gradiente w}
if $\sigma$ is small and therefore,
\[
\Delta_{p(x)}w\le -L_2\rho=\Delta_{p(x)}v \quad\mbox{in}\quad
\widetilde D\setminus B_r(\xi).
\]

The functions $v_1$ and $v_2$ are also constructed in such a way that $w\ge v$
on $\partial\big(\widetilde D\setminus B_r(\xi)\big)$.

As in the previously cited papers, we let
\[
d_1(x)=-x_N+\sigma-s\eta(x')\quad\mbox{and}\quad
v_1(x)=\lstar(0)\frac{\gamma_1}{\mu_1}\big(1-e^{-\mu_1d_1(x)}\big)\quad\mbox{in}\quad
D
\]
with $\mu_1=C_1\sigma$ and $\gamma_1=1+C_2\sigma$. Then, $|\nabla
v_1|\le \lstar(0) (1+C\sigma)(1+C_2\sigma)$ with $C$ depending
only on $\eta$ (in particular, $|\nabla v_1|\le
\lstar(0)(1+C_3\sigma)$ with $C_3$ depending only on $C_2$ and
$\eta$). Moreover, $D_{x_ix_j}v_1=\lstar(0)\gamma_1e^{-\mu_1d_1}
\big[D_{x_ix_j}d_1-\mu_1{d_1}_{x_i}{d_1}_{x_j}\big]$. Thus,
\[
\begin{aligned}
{\mathcal T}v_1&\le \gamma_1e^{-\mu_1d_1}\Big[ N^2\ltwo \beta_2\|D^2\eta\|_{L^{\infty}}\sigma-\lone\beta_1\mu_1+ \ltwo C_0(1+C_3\sigma)\sigma\Big]\\
&\le \big[2 N^2\ltwo\beta_2\|D^2\eta\|_{L^{\infty}}+4\ltwo C_0-e^{-2}C_1\lone\beta_1]\sigma\\
&\le -M\sigma
\end{aligned}
\]
if $\sigma\le \sigma(C_1,C_2,C_3)$ and $C_1\ge
C_1(\lone,\ltwo,\beta_1,\beta_2,C_0, M)$.  $C_1$ is fixed from now
on.

On the other hand,
\begin{equation}\label{nabla v 1}
\frac23\lstar(0)\le
\lstar(0)(1+C_2\sigma)e^{-C_1\sigma(1+\sigma)}\le |\nabla v_1|\le
\lstar(0)(1+C_3\sigma)\le \frac32\lstar(0)
\end{equation}
if $\sigma\le \sigma(C_1,C_2,C_3)$.

\smallskip

The constant $C_2$ (and therefore also $C_3$) will be fixed now
in order to guaranty that $w\ge v$ on the boundary of $D\setminus
B_r(\xi)$.

\medskip

First, on $\partial D\cap B_1$ we have $v_1=0$.

\medskip

Observe that
\[
v_1(x)\ge\lstar(0)(1+C_2\sigma)e^{-2C_1\sigma
}d_1\ge\lstar(0)\big(1+\frac{C_2}2\sigma\big)d_1\ge
\lstar(0)(1+4\sigma)d_1
\]
if $C_2\ge8$ and $\sigma\le\sigma(C_1,C_2)$.

\smallskip

Now, on $\partial D\setminus B_1$ we consider two cases:

\smallskip

\noindent (a) $|x'|\ge\frac13$. Then, $\eta(x')=0$  and
$d_1=\sigma-x_N$. Thus, \[
v_1(x)\ge\lstar(0)(1+\sigma)(\sigma-x_N).
\]

\smallskip

\noindent (b) $|x'|<\frac13$. Then, $|x_N|>\sqrt{\frac23}$ and
\[\begin{aligned}
v_1(x)&\ge\lstar(0)\big(1+4\sigma\big)(\sigma-x_N-s\eta(x'))\\
&\ge
\lstar(0)(1+\sigma)(\sigma-x_N)+\lstar(0)\big[3(\sigma-x_N)-(1+4\sigma)\big]\sigma\\
&\ge\lstar(0)(1+\sigma)(\sigma-x_N)+\lstar(0)\big[\sqrt 6-(1+4\sigma)\big]\sigma\\
&\ge \lstar(0)(1+\sigma)(\sigma-x_N)
\end{aligned}
\]
if $C_2\ge8$, $\sigma\le\sigma(C_1,C_2)$ and $\sqrt
6-(1+4\sigma)\ge0$.

\medskip

Finally, if $x\in\partial B_r(\xi)$ and, since $r\le
\frac{(1-R)}{8}$, there holds that $x_N<0$, so that
\[
\begin{aligned}
v_1(x)&\ge \lstar(0)(1+4\sigma)(\sigma-x_N-s\eta(x'))\\
&=\lstar(0)\big[-x_N+(1+4\sigma)(\sigma-s\eta(x'))-4\sigma x_N\big]\\
&\ge-\lstar(0)x_N.
\end{aligned}
\]

Therefore, we can fix $C_2=8$ for our construction of $v_1$.

Now, we construct $v_2$ in $\widetilde D\setminus B_r(\xi)$ with
$\widetilde D$ as described above. We take $d_2$ such that
\[
d_2\in C^2(\overline{\widetilde D\setminus B_r(\xi)}),\quad
d_2=0\mbox{ on }\partial\widetilde D,\quad 0\le d_2\le 1\mbox{ in
}\widetilde D\setminus B_r(\xi)
\]
and, moreover
\[
0<\tilde c\le |\nabla d_2|\le \tilde
C\quad\mbox{in}\quad\widetilde D\setminus B_r(\xi)
\]
with $\tilde C,\tilde c$ depending only on $r,R$.

Then, we take
\[
v_2(x)=\lstar(0)\frac{\gamma_2}{\mu_2}\big(e^{\mu_2d_2(x)}-1\big).
\]

First, we fix $\mu_2$. Then, $\gamma_2$ is fixed so that
$v_2\le\frac{(1-R)}{8}\lstar(0)$, that is,
\[
\gamma_2=\frac{(1-R)}8\frac{\mu_2}{(e^{\mu_2}-1)}.
\]

Thus, there exist constants depending only on $\tilde c,\tilde
C,\mu_2,R$ such that
\[
0<c\lstar(0)\le |\nabla v_2|\le C\lstar(0).
\]

Now, we  fix $\mu_2$ so that ${\mathcal T}v_2\ge 0$ in $\widetilde
D\setminus B_r(\xi)$ for any operator ${\mathcal T}$  as above.

There holds
\[
{\mathcal T}  v_2\ge \gamma_2\big[\mu_2\lone\beta_1\tilde
c^2-\beta_2\ltwo\|D^2 d_2\|_{L^\infty}-\tilde CC_0\sigma\ltwo\big]\ge0
\]
if $\mu_2\ge \mu_2(\lone,\ltwo,\beta_1,\beta_2, \tilde c,\tilde C,
C_0)$. (Recall that $\tilde c$ and $\tilde C$ depend only on
$r,R$).

Now, in order to finish our proof we need to see that $w=v_1-\kappa\sigma v_2\ge v$ in $\widetilde D\setminus B_r(\xi)$. For this purpose, it only
remains to show that the inequality holds on $\partial B_r(\xi)$, that is, we have to prove that
\[
w(x)=v_1(x)-\kappa\sigma v_2(x)\ge
-\lstar(0)(1-\kappa\sigma)x_N\quad\mbox{on}\quad\partial B_r(\xi).
\]

Recall that $v_2\le \frac{(1-R)}{8}\lstar(0)$. Thus,
\[
w(x)=v_1(x)-\kappa\sigma v_2(x)\ge
\lstar(0)(-x_N-\frac{(1-R)}{8}\kappa\sigma)\ge-\lstar(0)(1-\kappa\sigma)x_N
\]
since $x_N\le -\frac{(1-R)}{8}$ for $x\in\partial B_r(\xi)$.

And we get a contradiction as discussed above.
\end{proof}

\medskip

The following lemma gives a control of the gradient of $u$  from
below on compact sets of $B_1^-$.
\begin{lemm}\label{lema 2} Let $p, \lstar, f, \rho, u$ as in Lemma \ref{lema 1}. For every $\ep,\delta>0$, $\frac12\le R<1$, there exists
$\sigma_0$ depending  on $\ep, N, \delta, R, p_{\min}, p_{\max},
\lone, \ltwo, L_1, L_2, C^*$ such that, if $\sigma\le\sigma_0$
there holds that
\[
|\nabla u|\ge \lstar(0)(1-\delta)\quad\mbox{in}\quad
B_{R}\cap\{x_N\le-\ep\}.
\]
\end{lemm}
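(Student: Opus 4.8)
The plan is to argue by contradiction via a compactness (blow‑up) argument, using Lemma \ref{lema 1} to force any limiting profile to coincide exactly with the half‑space solution $\ell_0 x_N^-$. Suppose the statement fails for some $\ep,\delta>0$ and $\frac12\le R<1$. Then there are powers $p_k$, slopes $\lstar_k$, right hand sides $f_k$ and scales $\rho_k$ satisfying the hypotheses of Lemma \ref{lema 1} with $\sigma=\sigma_k\to 0$, functions $u_k\in F(\sigma_k,1;\sigma_k)$ in $B_1$ with these data, and points $x_k\in B_R\cap\{x_N\le-\ep\}$ with $|\nabla u_k(x_k)|<\lstar_k(0)(1-\delta)$. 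Since $C^*\rho_k^{\alpha^*}\le\lstar_k(0)\sigma_k\le\ltwo\sigma_k$, we get $\rho_k\to 0$, whence $\|\nabla p_k\|_{L^\infty(B_1)}\le L_1\rho_k\to 0$, $\|f_k\|_{L^\infty(B_1)}\le L_2\rho_k\to 0$ and $[\lstar_k]_{C^{\alpha^*}(B_1)}\le C^*\rho_k^{\alpha^*}\to 0$. Passing to a subsequence, $p_k\to p_0$ and $\lstar_k\to\ell_0$ uniformly on $B_1$, with $p_0\in[p_{\min},p_{\max}]$ and $\ell_0\in[\lone,\ltwo]$ constant, and $x_k\to\bar x\in\overline{B_R\cap\{x_N\le-\ep\}}$. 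Moreover, integrating from the zero set $\{x_N=\sigma_k\}$ along vertical segments and using $|\nabla u_k|\le\lstar_k(0)(1+\sigma_k)$ gives $0\le u_k(x)\le\lstar_k(0)(1+\sigma_k)(\sigma_k-x_N)$ for $x\in B_{R_1}$, $x_N<\sigma_k$ (with $R_1:=\tfrac{1+R}2$), so the $u_k$ are uniformly bounded and uniformly Lipschitz and, after a further subsequence, $u_k\to u_0$ uniformly on compact subsets of $B_1$.

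Next I would bring in Lemma \ref{lema 1}: given any $\eta\in(0,\tfrac12]$, for $k$ large we have $u_k\in F(\sigma_k/R_1,\eta;\sigma_k)$ in $B_{R_1}$, hence $u_k(x)\ge-\lstar_k(0)(x_N+\eta R_1)$ whenever $x\in B_{R_1}$ and $x_N\le-\eta R_1$. Choosing $\eta$ with $\eta R_1<\ep/4$, this together with the upper bound shows that on the fixed open set $U:=B_{R_1}\cap\{x_N<-\ep/2\}$ (which contains $\overline{B_R\cap\{x_N\le-\ep\}}$) one has, for all large $k$, $u_k\ge\tfrac\ep4\lone>0$, and therefore $\Delta_{p_k(x)}u_k=f_k$ in $U$ with $p_k$ uniformly Lipschitz, $p_{\min}\le p_k\le p_{\max}$, $f_k$ bounded. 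By the interior $C^{1,\alpha}$ estimates of \cite{Fan} (exactly as invoked in the proof of Lemma \ref{propblowup}), $\{u_k\}$ is bounded in $C^{1,\alpha}_{\rm loc}(U)$, so along a further subsequence $\nabla u_k\to\nabla u_0$ uniformly on compact subsets of $U$. Passing to the limit in the two‑sided bound $-\lstar_k(0)(x_N+\eta R_1)\le u_k(x)\le\lstar_k(0)(1+\sigma_k)(\sigma_k-x_N)$ at each fixed $x\in U$ gives $-\ell_0(x_N+\eta R_1)\le u_0(x)\le-\ell_0 x_N$; letting $\eta\to 0$ we conclude $u_0(x)=-\ell_0 x_N=\ell_0 x_N^-$ throughout $U$, so $\nabla u_0\equiv-\ell_0 e_N$ and $|\nabla u_0|\equiv\ell_0$ on $U$.

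To finish: since $\bar x\in U$, the uniform convergence of $\nabla u_k$ near $\bar x$ yields $|\nabla u_k(x_k)|\to|\nabla u_0(\bar x)|=\ell_0$, while $|\nabla u_k(x_k)|<\lstar_k(0)(1-\delta)\to\ell_0(1-\delta)$, a contradiction; this produces the desired $\sigma_0$ with the claimed dependence on $\ep,N,\delta,R,p_{\min},p_{\max},\lone,\ltwo,L_1,L_2,C^*$. I expect the delicate points to be two: first, certifying (via Lemma \ref{lema 1}) that the $u_k$ are uniformly bounded away from $0$ on a \emph{fixed} open neighborhood of $\overline{B_R\cap\{x_N\le-\ep\}}$, so that the interior $C^{1,\alpha}$ estimates genuinely apply there and $\nabla u_k$ converges at the points $x_k$; and second, using the freedom in the flatness parameter $\eta$ to pin the blow‑up limit down to the exact half‑space solution rather than to a merely nearby function — the latter would not suffice, since the difference of two solutions of the $p_0$‑Laplacian need not solve any linear equation.
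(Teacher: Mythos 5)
Your proposal is correct and follows essentially the same route as the paper, which argues by contradiction exactly as in Lemma 6.6 of \cite{DP1}: Lemma \ref{lema 1} (applied with arbitrarily small flatness parameter) traps the blow-up limit between hyperplanes and guarantees uniform positivity on a fixed neighborhood of $\overline{B_R\cap\{x_N\le-\ep\}}$, so the equation holds there and the interior $C^{1,\alpha}$ estimates of \cite{Fan} give uniform convergence of $\nabla u_k$, forcing $|\nabla u_k(x_k)|\to\ell_0$ and contradicting $|\nabla u_k(x_k)|<\lstar_k(0)(1-\delta)$. The details you supply (the two-sided bound, the choice of $\eta\to0$, and the compact containment of the target set in the positivity region) are precisely the steps the paper delegates to \cite{DP1}.
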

\begin{proof} The proof is entirely similar to the one of Lemma 6.6 in \cite{DP1}. Let $R<R'<1$. As in \cite{DP1} we use a contradiction argument. In our case  by Lemma \ref{lema 1}, we have that the functions $u_k\in F(\frac 1{k},1;\frac1k)$ in $B_{1}$ satisfy
\[
\Delta_{p_k(x)}u_k=f_k\quad\mbox{in}\quad\mathcal K\subset\subset
B_{R'}^-,
\]
if $k$ is large depending on $\mathcal K$. Here $|f_k|\le
L_2\rho_k$,  $1<p_{\min}\le p_k(x)\le p_{\max}<\infty$, $|\nabla p_k|\le
L_1\rho_k$ and $C^*{\rho_k}^{\alpha^*}\le \frac{\lstar_k(0)}k$. Thus, by the
regularity estimates in \cite{Fan}, for a subsequence, $\nabla
u_k$ converges uniformly on compact subsets of $B_{R'}^-$. And the
proof follows as in \cite{DP1}.
\end{proof}

Now we can prove one of the main results that states that,
flatness to the right ($u$ vanishing for $x_N\ge\sigma$) implies
flatness to the left in a smaller ball.
\begin{prop}\label{prop-flatness}  Let $p, \lstar, f, \rho, u$ as in Lemma \ref{lema 1}. Let $1/2\le R<1$. There exist \linebreak
$\sigma_0=\sigma_0(N, R, p_{\min}, p_{\max}, \lone, \ltwo, L_1, L_2, C^*)$, $C_0=C_0(N, R, p_{\min}, p_{\max}, \lone, \ltwo, L_1, L_2, C^*)$ such that, if $\sigma\le \sigma_0$ there holds that $u\in F(\sigma/R,C_0\sigma;\sigma)$ in $B_{R}$ with the same power, slope and rhs.
\end{prop}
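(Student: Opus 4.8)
The plan is to follow the scheme of \cite{ACF} and \cite{DP1} already used in Lemma~\ref{lema 1}, the new ingredient being Lemma~\ref{lema 2}, which lets us linearize the equation in the region where $u>0$. Since conditions (1) and (3) defining the class $F(\sigma/R,C_0\sigma;\sigma)$ in $B_R$ do not involve the left-flatness constant, they are inherited directly from $u\in F(\sigma,1;\sigma)$ in $B_1$, so the whole point is condition (2) with the new constant $C_0\sigma$, i.e. the lower bound $u(x)\ge-\lstar(0)(x_N+C_0\sigma R)$ for $x\in B_R$, $x_N\le-C_0\sigma R$. Integrating $|\nabla u|\le\lstar(0)(1+\sigma)$ downward from the set $\{x_N\ge\sigma\}$, where $u\equiv0$, also yields the a priori upper bound $u\le\lstar(0)(1+\sigma)(\sigma-x_N)^+$ in $B_1$, which will serve as an upper barrier.

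First I would fix $R<R_1<1$ and a small constant $\epsilon_0=\epsilon_0(N,R,p_{\min},p_{\max},\lone,\ltwo,L_1,L_2,C^*)$, to be chosen. By Lemma~\ref{lema 1} (with radius $R_1$ and $\epsilon=\epsilon_0$), for $\sigma$ small we have $u\in F(\sigma/R_1,\epsilon_0;\sigma)$ in $B_{R_1}$, so $\{x_N<-\epsilon_0R_1\}\cap B_{R_1}\subset\{u>0\}$ and $\partial\{u>0\}\cap B_{R_1}$ lies in the slab $\{-\epsilon_0R_1\le x_N\le\sigma\}$. By Lemma~\ref{lema 2} (with a fixed small $\delta_0$ and $\epsilon=2\epsilon_0R_1$) we also have $|\nabla u|\ge\lstar(0)(1-\delta_0)$ in $B_{R_1}\cap\{x_N\le-2\epsilon_0R_1\}$. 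Then, wherever $|\nabla u|$ is bounded away from $0$, the equation $\Delta_{p(x)}u=f$ in $\{u>0\}\cap B_{R_1}$ can be recast in nondivergence form $\sum_{ij}b_{ij}u_{x_ix_j}+\sum_jb_ju_{x_j}=g$, with $\{b_{ij}\}$ uniformly elliptic with constants $\beta_1=\min\{1,p_{\min}-1\}$, $\beta_2=\max\{1,p_{\max}-1\}$, and $|b_j|\le C_0\sigma$, $|g|\le C_0\sigma$; here one uses $|\nabla p|\le L_1\rho$, $|f|\le L_2\rho$, the standing hypothesis $C^*\rho^{\alpha^*}\le\lstar(0)\sigma$ (which gives $\rho\le C\sigma$ for $\sigma$ small), together with the two-sided gradient bounds.

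Now I would run a barrier argument entirely parallel to the one in the proof of Lemma~\ref{lema 1}, but oriented to squeeze $\partial\{u>0\}$ into $\{|x_N|\le C_0\sigma\}$. Let $t\ge0$ be maximal with $\{x_N<-t\}\cap B_{R_1}\subset\{u>0\}$; then $t\le\epsilon_0R_1$ and, arguing with a bump-type barrier domain $D\supset\{u>0\}$ exactly as in Lemma~\ref{lema 1}, $\partial\{u>0\}$ touches $\{x_N=-t\}$ at a point $z$ at which there is a ball $B\subset\{u=0\}$ with $z\in\partial B$. Suppose $t>C_0\sigma$. One constructs a $p(x)$-subsolution $\underline w$ --- built, as the auxiliary functions $v_1,v_2$ in Lemma~\ref{lema 1}, from exponentials of smoothings of the relevant distance functions --- with $|\nabla\underline w|$ pinched near $\lstar(0)$, with $\Delta_{p(x)}\underline w\ge-L_2\rho$ in the region between $\partial B$ (resp. $\{x_N=-t\}$) and an inner surface where the lower bound on $u$ furnished by the left-flatness and Lemma~\ref{lema 2} is available, and with $\underline w\le u$ on the boundary of that region. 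The comparison principle for the $p(x)$-Laplacian (valid by the strict monotonicity of $A(x,\xi)=|\xi|^{p(x)-2}\xi$ recalled in Section~\ref{sect-intro}) gives $\underline w\le u$ there, and evaluating near $z$ against the free-boundary condition $\limsup_{\stackrel{x\to z}{u(x)>0}}u(x)/\mbox{dist}(x,B)\ge\lstar(z)\ge\lstar(0)(1-\sigma)$ of Definition~\ref{weak2}(4) yields a contradiction for $\sigma$ small, once $C_0$ is taken large (depending only on the structural constants). Hence $t\le C_0\sigma$, and one more integration of the gradient bound, as at the end of Lemma~\ref{lema 1}, turns this into the desired lower bound on $u$ in $B_R$.

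The hard part is the construction and calibration of the subsolution $\underline w$. As in the proof of Lemma~\ref{lema 1}, one has to choose the exponents and amplitudes in the exponential barriers so that every perturbative term produced by the variable exponent --- the variation of the ellipticity $\{b_{ij}\}$ and, above all, the first-order drift $b_j=p_{x_j}\log|\nabla\underline w|$ --- as well as the term coming from $f$, is genuinely of size $\le C_0\sigma$, and so that $\underline w$ still lies below $u$ on the inner boundary of the comparison region; making this gain beat those losses for all small $\sigma$, with all constants depending only on $N,R,p_{\min},p_{\max},\lone,\ltwo,L_1,L_2,C^*$, and matching it with the free-boundary/nondegeneracy condition of Definition~\ref{weak2} at the contact point $z$, is the delicate computational core of the argument, analogous to (but somewhat longer than) the construction of $v_1$ and $v_2$ in Lemma~\ref{lema 1}.
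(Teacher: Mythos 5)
Your reduction of the problem to condition (2) of the flatness class is right, but the way you propose to obtain that condition does not work, and the step you skip is precisely the heart of the proposition. Even if you could show that the free boundary in $B_{R_1}$ stays above $\{x_N=-C_0\sigma\}$, the conclusion you need is the quantitative bound $u(x)\ge-\lstar(0)(x_N+C_0\sigma R)$ for $x_N\le -C_0\sigma R$, and this cannot be recovered by ``one more integration of the gradient bound'': the inequality $|\nabla u|\le\lstar(0)(1+\sigma)$ only controls $u$ from \emph{above} when one integrates away from the zero set, while the mere positivity of $u$ below the level $-C_0\sigma$ carries no quantitative information. In Lemma \ref{lema 1} the lower bound at depth is seeded by the contact points $x_\xi$ with $u(x_\xi)\ge-\lstar(0)(1-\kappa\sigma){x_\xi}_N$, and the loss there is of order $r$, a fixed radius --- which is exactly why that lemma only yields left-flatness $\ep$ and not $C_0\sigma$. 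The paper's proof (following Theorem 6.3 in \cite{DP1} and \cite{ACF}) upgrades this by a different mechanism, which your sketch omits entirely: by Lemmas \ref{lema 1} and \ref{lema 2} the equation is uniformly elliptic in $B_{R''}\cap\{x_N<-(1-R)/8\}$, the nonnegative function $w=\lstar(0)(1+\sigma)(\sigma-x_N)-u$ solves there a linear equation with right-hand side of size $C\sigma$, and the Harnack inequality in balls $B_{(1-R)/8}(\xi)$, $\xi\in\partial B_{R'}\cap\{x_N\le-(1-R)/4\}$, propagates the $O(\sigma)$ smallness of $w$ at the contact points of Lemma \ref{lema 1} to $w(\xi)\le\widetilde C\lstar(0)\sigma$; vertical integration of the gradient bound then gives condition (2) with $C_0\sigma$. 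Without this Harnack step on $w$ (or an equivalent substitute) the $O(\sigma)$ left-flatness is simply not reached.

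The contradiction argument you propose for squeezing the free boundary is also flawed in itself. At a lowest touching point $z$ of $\partial\{u>0\}$ with $\{x_N=-t\}$ the tangent balls lie in $\{u>0\}$, not in $\{u=0\}$: the bump-domain device of Lemma \ref{lema 1} produces touching points at the top of the positive phase, where the complement of $D$ is in the zero set, so your claim that there is a ball $B\subset\{u=0\}$ with $z\in\partial B$ is unsupported (and in general false). Moreover, the condition you invoke, $\limsup_{x\to z,\,u>0}u(x)/\mbox{dist}(x,B)\ge\lstar(z)$, is a lower bound on the growth of $u$ and cannot be contradicted by a comparison $\underline w\le u$ with a subsolution, which is again a lower bound; a contradiction from below would have to be played against the first part of Definition \ref{weak2}(4), i.e. $\limsup_{u>0}|\nabla u|\le\lstar(z)$, via a subsolution whose slope at $z$ exceeds $\lstar(z)$, which is a different argument from the one you sketch and still requires the touching geometry you have not established. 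In fact none of the weak-solution conditions provides a purely local obstruction at such a lowest point; the obstruction is global, namely the closeness of $u$ to the plane solution $\lstar(0)(1+\sigma)(\sigma-x_N)^+$, and that is exactly what the Harnack argument on $w$ quantifies.
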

\begin{proof} The proof follows as the one of Theorem 6.3 in \cite{DP1}. We let $R'=R+(1-R)/4$ and $R''=R+(1-R)/2$. In our case, since $|\nabla u|\ge \frac{\lstar(0)}2$ in $\overline{B_{R''}}\cap\{x_N\le -(1-R)/{8}\}$ if $\sigma$ is small and $|\nabla u|\le 2\lstar(0)$, there holds that $u$ satisfies
\[
{\mathcal T}u=|\nabla u|^{2-p(x)}f(x)\quad\mbox{in}\quad
B_{R''}\cap\{x_N< -(1-R)/{8}\}
\]
for an operator as the one considered in Lemma \ref{lema 1}.

Then, as in \cite{DP1} (see also \cite{ACF}) we take
\[
w(x)=\lstar(0)(1+\sigma)(\sigma-x_N)- u(x)
\]
that satisfies
\[
{\mathcal T}w=-\lstar(0)(1+\sigma)b_N-|\nabla
u|^{2-p(x)}f(x)\quad\mbox{in}\quad B_{R''}\cap\{x_N<
-\frac{(1-R)}{8}\}
\]
and, using that $w\ge0$ in $B_{1}\cap\{x_N\le\sigma\}$, taking
$\xi\in\partial B_{R'}\cap\{x_N\le -(1-R)/4\}$, applying Harnack
inequality in $B_{(1-R)/8}(\xi)$ and using that the right hand
side is bounded by $C\sigma$ for a constant $C$ depending only on
$R,p_{\min},p_{\max},\lone,\ltwo, L_1, L_2$ and $C^*$ we get, as in
\cite{ACF,DP1},
\[
w(\xi)\le \widetilde C\lstar(0)\sigma.
\]

Then, the proof follows as in \cite{DP1}.
\end{proof}

Finally, we can improve on the control of the gradient.
\begin{lemm}\label{lema 3}Let $p, \lstar, f, \rho, u$ as in Lemma \ref{lema 1}. For every  $1/2\le R<1$, $0<\delta<1$ there exists $\sigma_{\delta,R}$ and $C_{\delta,R}$
depending also on $N, p_{\min}, p_{\max}, \lone, \ltwo, L_1, L_2, C^*$ such that, if $\sigma\le\sigma_{\delta,R}$ there holds that
\[
|\nabla u|\ge\lstar(0)(1-\delta)\quad\mbox{in}\quad
B_{R}\cap\{x_N\le -C_{\delta,R}\sigma\}.
\]
\end{lemm}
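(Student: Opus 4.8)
The plan is to combine the flatness estimate of Proposition \ref{prop-flatness} with the gradient lower bound of Lemma \ref{lema 2} at an intermediate scale. Concretely, starting from $u\in F(\sigma,1;\sigma)$ in $B_1$, Proposition \ref{prop-flatness} gives (after possibly shrinking $\sigma$) that $u\in F(\sigma/R,C_0\sigma;\sigma)$ in $B_R$; after rescaling to the unit ball via \eqref{reduction} this says $u$ is flat in both directions in $B_R$ with parameters controlled by $\sigma$. The point is that once $u$ is two-sided flat, a ball $B\subset\{u=0\}$ can be brought into contact with the free boundary from above, and the lower-bound part of property (4) in Definition \ref{weak2} together with the hypotheses of Lemma \ref{lema 1} puts $u$ back into a class to which Lemma \ref{lema 2} applies. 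First I would fix $R<R'<1$, run Proposition \ref{prop-flatness} on $B_1$ to land in $F(\sigma/R',C_0\sigma;\sigma)$ on $B_{R'}$, rescale so the free boundary in direction $e_N$ is pinched between the hyperplanes $x_N=\pm C\sigma$, and then invoke Lemma \ref{lema 2} inside $B_R\subset B_{R'}$.

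The key steps, in order, are: (i) apply Proposition \ref{prop-flatness} to get two-sided flatness of $u$ in $B_{R'}$ with flatness parameters $O(\sigma)$; (ii) observe that after the rescaling \eqref{reduction} the rescaled solution belongs, on the scale $B_{R'/R}$, to a class $F(\tilde\sigma,1;\tilde\sigma)$ with $\tilde\sigma$ comparable to $\sigma$, so that Lemma \ref{lema 2} is applicable with that $\tilde\sigma$; (iii) choose $\ep$ in Lemma \ref{lema 2} to be exactly the flatness width $C_0\sigma$ — here one must be careful that the $\sigma_0$ produced by Lemma \ref{lema 2} depends on $\ep=C_0\sigma$, so one needs a fixed-point / smallness argument showing that there is a $\sigma_{\delta,R}$ for which $\sigma\le\sigma_{\delta,R}$ guarantees simultaneously that Proposition \ref{prop-flatness} applies and that $C_0\sigma$ is admissible as the $\ep$ in Lemma \ref{lema 2}; (iv) translate the conclusion $|\nabla u|\ge \lstar(0)(1-\delta)$ on $B_R\cap\{x_N\le-\ep\}$ of the rescaled statement back to $u$ on $B_R\cap\{x_N\le -C_{\delta,R}\sigma\}$, reading off $C_{\delta,R}$ from the constants $C_0$ and the rescaling factor.

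I expect the main obstacle to be step (iii): Lemma \ref{lema 2} delivers the lower bound only on $\{x_N\le-\ep\}$ with a $\sigma_0$ that deteriorates as $\ep\to0$, whereas here the region of interest degenerates to a neighborhood of $\{x_N=0\}$ of width $O(\sigma)$ as $\sigma\to0$; so one cannot simply fix $\ep$ and send $\sigma\to0$. The remedy is to run the contradiction argument of Lemma \ref{lema 2} directly at the matched scale: assume a sequence $u_k\in F(\sigma_k,1;\sigma_k)$ with $\sigma_k\to0$ for which the conclusion fails at points $x_k$ with ${x_k}_N\le -C\sigma_k$; after Proposition \ref{prop-flatness} and rescaling by the factor $\sigma_k$ (so that the free boundary is pinned near $x_N=0$ uniformly), the rescaled functions converge, by the interior gradient estimates of \cite{Fan}, to a limit which is forced to be the half-plane solution $\lstar(0)x_N^-$, giving $|\nabla u|\to\lstar(0)$ uniformly on the relevant compact set and contradicting the failure. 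In other words, the proof is entirely similar to that of Lemma \ref{lema 2}, but with the dilation tuned to the flatness scale $\sigma$; this is exactly the scheme used for the analogous statement in \cite{DP1}, to which I would refer for the routine details.
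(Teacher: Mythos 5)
Your steps (i), (ii), (iv) and your diagnosis of the obstacle are right, but the remedy in step (iii) has a genuine gap: $\sigma_k$ is not the matched scale. The points you must treat have depth $|x_N|$ anywhere in the range $[C_{\delta,R}\sigma,\,R]$, and in the contradiction argument the failure points $x_k$ may satisfy $\sigma_k\ll |{x_k}_N|\ll 1$ (say $|{x_k}_N|=\sqrt{\sigma_k}$). After rescaling by $\sigma_k$ with the free boundary pinned near $\{x_N=0\}$, such points escape every compact set, so the locally uniform convergence of gradients furnished by \cite{Fan} says nothing about them; and they are not covered by Lemma \ref{lema 2} either, since there $\ep$ is fixed. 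Conversely, for points at depth comparable to $C\sigma_k$ with $C$ fixed, the rescaled free boundary only lies in a slab of width of order one (the flatness width $C_0\sigma_k$ divided by $\sigma_k$), so the limit is \emph{not} forced to be the half--plane solution $\lstar(0)x_N^-$ and no contradiction follows --- consistently with the fact that the constant $C_{\delta,R}$ must be taken large, which a setup with a fixed $C$ cannot exploit. (One can salvage a $\sigma_k$--type blow-up by centering at the failure point and letting $C\to\infty$, but then the free boundary escapes to infinity and the limit is an entire $p_0$-harmonic function trapped between two parallel planes of slope $\lstar(0)$, so one needs a separate Liouville-type step; that argument is no longer ``entirely similar'' to the proof of Lemma \ref{lema 2}.)

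The paper's proof, which is that of Theorem 6.4 in \cite{DP1}, uses the scale dictated by the point itself: for $y$ with $y_N\le -C_{\delta,R}\sigma$ one rescales about $y$ (or the nearby free boundary point) by $2d$, where $d=\mathrm{dist}(y,\partial\{u>0\})$ is comparable to the depth. After Proposition \ref{prop-flatness} the free boundary lies in a slab of width $C_0\sigma$, so at unit scale the rescaled solution is flat with parameter of order $\sigma/d\le C_0/C_{\delta,R}$, which is made small by choosing $C_{\delta,R}$ large --- not by sending $\sigma\to0$ --- and the rescaled $\bar p(x)=p(y+2dx)$, $\bar\lambda^*(x)=\lstar(y+2dx)$, $\bar f(x)=2d\,f(y+2dx)$ satisfy the same structure conditions. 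Then Lemma \ref{lema 2}, applied at unit scale with a fixed $\ep$ (say $1/4$), gives $|\nabla u(y)|\ge\lstar(0)(1-\delta)$, and $C_{\delta,R}$ is read off from the ratio $C_0/\sigma_0(\ep,\delta,\dots)$. Your proposal never produces this constant, and as written the compactness argument does not reach the intermediate depths; this is the missing ingredient.
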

\begin{proof} It follows exactly as the proof of Theorem 6.4 in \cite{DP1}.

Observe that the scalings $\bar p_k(x)=p_k(y_k+2d_k x)$, $\bar
{\lstar_k}(x)=\lstar_k(y_k+2d_k x)$ and $\bar
f_k(x)=2d_kf_k(y_k+2d_k x)$ satisfy the same structure conditions
as the functions $p_k$, $\lstar_k$ and $f_k$ that are independent
of $k$ in the contradiction argument.
\end{proof}

Now, in order to improve the flatness in some possibly  new
direction we perform a non-ho\-mo\-ge\-neous blow up.

\begin{lemm}\label{lema 4} Let $u_k\in F(\sigma_k,\sigma_k;\tau_k)$ in $B_1$ with power $p_k$, slope $\lstar_k$ and rhs $f_k$ such
that $1<p_{\min}\le p_k(x)\le p_{\max}<\infty$, $0<\lone\le\lstar_k(x)\le\ltwo<\infty$, $|\nabla p_k|\le L_1\rho_k$, $|f_k|\le L_2\rho_k$, $[\lstar_k]_{C^{\alpha^*}}\le C^*\rho_k^{\alpha^*}$ with $C^*\rho_k^{\alpha^*}\le\lstar_k(0)\tau_k$, $\sigma_k\to0$ and $\frac{\tau_k}{\sigma_k^2}\to0$ as $k\to\infty$.

For $y\in B_1'$, let
\[
\begin{aligned}
& F_k^+(y):=\sup\{h\,/\,(y,\sigma_k h)\in\partial\{u_k>0\}\},\\
& F_k^-(y):=\inf\{h\,/\,(y,\sigma_k h)\in\partial\{u_k>0\}\}.
\end{aligned}
\]

Then, for a subsequence,
\[\begin{aligned}
(1)\ &F(y):=\limsup_{\stackrel{z\to y}{k\to\infty}}F_k^+(z)=\liminf_{\stackrel{z\to y}{k\to\infty}}F_k^-(z) \mbox{ for every }y\in B_1'.\\
    &Moreover, F_k^+\to F,\  F_k^-\to F\  \mbox{uniformly},\   F \mbox{ is continuous}, \ F(0)=0 \mbox{ and } |F|\le 1.\\
\ \\
(2)\  &F \mbox{ is subharmonic}.
\end{aligned}\]
\end{lemm}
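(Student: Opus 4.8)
The plan is to follow the classical non-homogeneous blow-up scheme of Alt--Caffarelli \cite{AC} and its quasilinear adaptations \cite{ACF,DP1,FBMW}, adapting the estimates to the $p(x)$-setting already developed in Section~3. The two assertions are logically distinct but share the same preliminary compactness input, so I would first establish the needed uniform bounds and then treat (1) and (2) in turn.

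\textbf{Preliminary compactness and regularity of the strip.} First I would record that, since $u_k\in F(\sigma_k,\sigma_k;\tau_k)$ with $\sigma_k\to0$, the free boundary $\partial\{u_k>0\}$ is trapped in the strip $\{|x_N|\le\sigma_k\}$ inside $B_{1/2}$, say, so $F_k^\pm$ are well-defined on a fixed ball $B_{1-\eta}'$ for $k$ large and satisfy $|F_k^\pm|\le 1$. Applying Lemma~\ref{lema 1} (flatness to the right implies flatness to the left in a smaller ball; here used after a harmless dilation to reduce flatness parameters, together with Proposition~\ref{prop-flatness}) I get that in a fixed interior ball the solution $u_k$ is, after the rescaling, close to the half-plane solution $\lstar_k(0)x_N^-$; in particular by the gradient bound Lemma~\ref{lema 3} one has $|\nabla u_k|\ge\lstar_k(0)(1-\delta)$ in $B_{1-\eta}\cap\{x_N\le-C_\delta\sigma_k\}$. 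These are exactly the ingredients that in \cite{DP1} force $F_k^+$ and $F_k^-$ to be asymptotically close: the Lipschitz bound $|\nabla u_k|\le\lstar_k(0)(1+\tau_k)$ from above and the lower gradient bound from below squeeze the graph of the free boundary between two nearly-linear pieces, giving an oscillation estimate $F_k^+(y)-F_k^-(y)=o(1)$ uniformly. Then by Ascoli--Arzel\`a (the $F_k^\pm$ are equi-continuous by a barrier/Harnack argument, again as in \cite{ACF,DP1}) a subsequence converges uniformly, the two limits coincide, the common limit $F$ is continuous, $F(0)=0$ follows from $0\in\partial\{u_k>0\}$, and $|F|\le1$ is inherited. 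This gives (1).

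\textbf{Subharmonicity of $F$.} For (2) I would argue by contradiction/comparison exactly as in \cite{AC}, Lemma~7.3 and \cite{DP1}: suppose $F$ is not subharmonic, so there is a ball $B_r'(y_0)\subset B_1'$ and a harmonic function $h$ in $B_r'(y_0)$ with $h=F$ on $\partial B_r'(y_0)$ but $h(y_0)<F(y_0)$. Lift $h$ to the function $\hat h(x',x_N)=h(x')$ and use it to build a competitor free boundary: precisely, one compares $u_k$ with the $p_k(x)$-harmonic replacement $v_k$ in the region $\{x_N<\sigma_k(\hat h(x')+\varepsilon)\}\cap$(a fixed cylinder), with boundary data matching $\lstar_k(0)$ times the signed distance to the graph. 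Because $\Delta_{p_k(x)}u_k=f_k$ with $|f_k|\le L_2\rho_k\to0$ and $|\nabla p_k|\le L_1\rho_k\to0$, the equation for $u_k$ in the positivity set is a small perturbation of $\Delta_{p_k(x_0)}$; linearizing around the gradient $\lstar_k(0)e_N$ (legitimate by the lower gradient bound) the operator becomes uniformly elliptic with coefficients $b_{ij},b_j$ as in the proof of Lemma~\ref{lema 1}, with $|b_j|=O(\sigma_k)$. One then shows, using the hypothesis $\tau_k/\sigma_k^2\to0$ (this is where that scaling is essential), that $v_k$ would push the free boundary strictly to the left of the graph of $\sigma_k F_k$, contradicting the maximality definition of $F_k^+$ and the convergence $F_k^+\to F$ in (1). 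The key point is that the curvature gain from replacing $F$ by the harmonic $h$ is of order $\sigma_k/r^2$, while the errors coming from $f_k$, from $|\nabla p_k|$, and from the slope oscillation $[\lstar_k]\rho_k^{\alpha^*}\le\lstar_k(0)\tau_k$ are all $o(\sigma_k)$ since $\tau_k\ll\sigma_k^2\ll\sigma_k$; hence for large $k$ the barrier strictly dominates and we reach the contradiction.

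\textbf{Main obstacle.} The routine parts are the compactness and equicontinuity of $F_k^\pm$. The genuinely delicate step is the barrier construction for subharmonicity in the $p(x)$-setting: one must verify that the linearized operator $\mathcal T_k=\sum b_{ij}\partial_{ij}+\sum b_j\partial_j$ governing $u_k$ near the flat free boundary has all lower-order and inhomogeneous terms of size $o(\sigma_k)$ (which forces careful bookkeeping of the $\log|\nabla u_k|$ factor in $b_j$, controlled exactly as in \eqref{bj}), and that the explicit barrier built from $h$ and the exponential profiles $v_1,v_2$ of Lemma~\ref{lema 1} beats $v_k$ by a margin of order $\sigma_k/r^2$. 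Handling the variable slope $\lstar_k$ — i.e.\ replacing $\lstar_k(z)$ by $\lstar_k(0)$ up to an error controlled by $C^*\rho_k^{\alpha^*}\le\lstar_k(0)\tau_k$ on the free boundary datum — is the other point requiring care, but it is absorbed into the same $o(\sigma_k)$ budget. Once those estimates are in place, the argument closes verbatim as in \cite{DP1}.
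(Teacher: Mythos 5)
Your treatment of part (1) is at the same level as the paper's (which simply invokes Lemma 7.3 of \cite{AC}), and apart from the loose appeal to equicontinuity/Ascoli--Arzel\`a (the $F_k^\pm$ need not be continuous, so the convergence must be handled in the $\limsup/\liminf$ sense, as in \cite{AC}) this part is acceptable. The genuine gap is in part (2). The paper does not prove subharmonicity by a barrier comparison at all: it uses the measure-theoretic structure of the free boundary. The representation formula of Theorem \ref{rep-weak}, together with the identification $q_{u_k}=(\lambda_k^*)^{p_k-1}$ of Lemma \ref{glambda*}, converts the flux of $|\nabla u_k|^{p_k(x)-2}\nabla u_k$ across the graph $Z_0(\sigma_k g)$ into the ${\mathcal H}^{N-1}$-measure of $\partial_{\rm red}\{u_k>0\}$ lying above it, and the excess-area lemma (Lemma 7.5 in \cite{AC}, applied to $E_k=\{u_k>0\}\cup Z_-$) shows that $F(y_0)>g(y_0)$ forces this measure to exceed ${\mathcal H}^{N-1}(Z_0\cap\{u_k>0\})$ by $c\sigma_k^2$. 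Comparing with the upper bound $|\nabla u_k|\le\lambda_k^*(0)(1+\tau_k)$ yields $c\sigma_k^2\le C(\tau_k+\rho_k^{\alpha^*})$, which contradicts $C^*\rho_k^{\alpha^*}\le\lambda_k^*(0)\tau_k$ and $\tau_k/\sigma_k^2\to0$. Note that the available gain is \emph{quadratic} in $\sigma_k$; this is exactly why the hypothesis $\tau_k/\sigma_k^2\to0$ is needed.

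Your barrier route does not close, for two concrete reasons. First, your bookkeeping is inconsistent with the hypotheses: you claim a gain of order $\sigma_k/r^2$ against errors $o(\sigma_k)$, an accounting under which $\tau_k/\sigma_k\to0$ would already suffice; the fact that the lemma demands $\tau_k\ll\sigma_k^2$ signals that the mechanism you describe is not the one operating here, and you never substantiate where a gain of order $\sigma_k$ (rather than $\sigma_k^2$) would come from. Second, the claimed contradiction --- that the comparison function ``pushes the free boundary strictly to the left of the graph of $\sigma_k F_k$, contradicting the maximality definition of $F_k^+$'' --- does not follow from a sub/supersolution comparison: such a comparison controls the values of $u_k$, not the location of $\partial\{u_k>0\}$, unless one invokes the free boundary condition of Definition \ref{weak2}(4) at a touching point with a tangent ball contained in $\{u_k=0\}$ (as is done in Lemma \ref{lema 1}); the geometry you set up (free boundary above the harmonic graph on $\partial B_r'(y_0)$, below it at $y_0$) produces no such touching point in the required direction. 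Moreover, the lower gradient bound $|\nabla u_k|\ge\lambda_k^*(0)(1-\delta)$ of Lemma \ref{lema 3} holds only in $\{x_N\le-C_\delta\sigma_k\}$, strictly below the strip containing the free boundary, so the linearization you rely on is not justified where the contradiction would have to be detected. Finally, the assertion that the argument ``closes verbatim as in \cite{DP1}'' is inaccurate: \cite{DP1}, like \cite{AC} and \cite{ACF}, proves this step by the measure-theoretic excess-area argument, which is the route the paper follows and the one you would need to reproduce (with the new terms coming from $f_k$ and the variable slope absorbed into the $o(\sigma_k^2)$ budget, as in \eqref{eq-1}--\eqref{eq-3c}).
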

\begin{proof}(1) is proved exactly as in Lemma 7.3 in \cite{AC}.

In order to prove (2), we take $g$ a
harmonic function in a neighborhood of $B'_r(y_0)\subset\subset B'_1$
with $g>F$ on $\partial B'_r(y_0)$ and $g(y_0)<F(y_0)$ and get a
contradiction. We define the sets $Z_+(\phi),Z_-(\phi)$ and $Z_0(\phi)$ as in
the previous papers. That is,
\[Z:=B'_r(y_0)\times\R, \qquad
Z_+(\phi):=\{(y,h)\in Z \,/\, h>\phi(y)\}\]
and corresponding definitions for $Z_-(\phi),Z_0(\phi)$.

Observe that we may assume that ${\mathcal H}^{N-1}\big(Z_0(\sigma_kg)\cap\partial\{u_k>0\}\big)=0$. If not, we replace $g$ by $g+c_0$ for some small enough constant $c_0$.

In fact, let $c_1>0$ small such that $g(y_0)<g(y_0) + c <F(y_0)$ for $0<c<c_1$. Since by {Theorem \ref{rep-weak}}
${\mathcal H}^{N-1}(D\cap\partial\{u_k>0\})<\infty$ for every $D\subset\subset B_1$, we see that
\[|\{(y,h)\in Z \,/\,  \sigma_kg(y)<h<\sigma_k(g(y)+c_1)\}\cap\partial\{u_k>0\}|=0,\]
which implies that $\int_0^{c_1} H_k(c) dc=0$, for $H_k(c)={\mathcal H}^{N-1}\big(Z_0(\sigma_k(g+c))\cap\partial\{u_k>0\}\big)$.
Then, we can take $c_0\in(0,c_1)$ such that $H_k(c_0)=0$ for every $k$, and now replacing $g$ by $g+c_0$ we have 
${\mathcal H}^{N-1}\big(Z_0(\sigma_kg)\cap\partial\{u_k>0\}\big)=0$. 

In the following we denote $Z_+=Z_+(\sigma_kg)$ and similarly $Z_-$ and $Z_0$.

 Now, by using the representation formula (Theorem \ref{rep-weak}) and proceeding as in \cite{AC}, Lemma 7.5, we get
\[
\int_{\{u_k>0\}\cap Z_0}|\nabla u_k|^{p_k(x)-2}\nabla
u_k\cdot\nu\,d{\mathcal H}^{N-1}=\int_{\partial\{u_k>0\}\cap
Z_+}q_{u_k}\,d{\mathcal H}^{N-1}+\int_{\{u_k>0\}\cap Z_+}f_k\,dx.
\]

Since $q_{u_k}\ge0$ and $q_{u_k}(x)=\lstar_k(x)^{p_k(x)-1}$ \ \
${\mathcal H}^{N-1}-a.e.$ on $\partial_{\rm{red}}\{u_k>0\}$,
\begin{equation}\label{eq-1}\begin{aligned}
&\int_{\partial\{u_k>0\}\cap Z_+}q_{u_k}\,d{\mathcal H}^{N-1}\ge\int_{\partial_{\rm{red}}\{u_k>0\}\cap Z_+}{\lstar_k}^{p_k-1}\,d{\mathcal H}^{N-1}\\
&\ \ \
\ge\min\Big\{\big(\lstar_k(0)(1-C^{**}\rho_k^{\alpha^*})\big)^{p_k^+-1},\big(\lstar_k(0)(1-C^{**}\rho_k^{\alpha^*})\big)^{p_k^--1}\Big\}
{\mathcal H}^{N-1}\big(\partial_{\rm{red}}\{u_k>0\}\cap Z_+\big)
\end{aligned}
\end{equation}
where $C^{**}=\frac{C^{*}}{\lone}$, $p_k^+=\sup_{B_{1}}p_k$ and
$p_k^-=\inf_{B_{1}}p_k$. Recall that $p_k^+-p_k^-\le L_1\rho_k$.

On the other hand,
\begin{equation}\label{eq-2}
\int_{\{u_k>0\}\cap Z_+}f_k\,dx\ge -L_2\rho_k\big|\{u_k>0\}\cap
Z_+\big|.
\end{equation}

Finally,
\begin{equation}\label{eq-3}\begin{aligned}
&\int_{\{u_k>0\}\cap Z_0}|\nabla u_k|^{p_k(x)-2}\nabla u_k\cdot\nu\,d{\mathcal H}^{N-1}\\
&\ \ \ \le\max\Big\{\big(\lstar_k(0)(1+\tau_k)\big)^{p_k^+-1},
\big(\lstar_k(0)(1+\tau_k)\big)^{p_k^--1}\Big\} {\mathcal
H}^{N-1}\big(\{u_k>0\}\cap Z_0\big).
\end{aligned}
\end{equation}

{}From now on, in order to simplify the computations, we assume that
$\lstar_k(0)\ge1$. The final result will be the same if not.

By \eqref{eq-1}, \eqref{eq-2} and \eqref{eq-3},
\[
\begin{aligned}
&
{\lstar_k(0)}^{p_k^--1}(1-C^{**}\rho_k^{\alpha^*})^{p_k^+-1}{\mathcal H}^{N-1}\big(\partial_{\rm{red}}\{u_k>0\}\cap Z_+\big)\\
&\ \ \ \le L_2\rho_k\big|\{u_k>0\}\cap Z_+\big|+
{\lstar_k(0)}^{p_k^+-1}(1+\tau_k)^{p_k^+-1}{\mathcal
H}^{N-1}\big(\{u_k>0\}\cap Z_0\big).
\end{aligned}
\]

Therefore,
\begin{equation}\label{eq-3a}
\begin{aligned}
&{\mathcal H}^{N-1}\big(\partial_{\rm{red}}\{u_k>0\}\cap Z_+\big)\\
&\qquad \le
{\lstar_k(0)}^{p_k^+-p_k^-}\Big(\frac{1+\tau_k}{1-C^{**}\rho_k^{\alpha^*}}\Big)^{p_k^+-1}
{\mathcal H}^{N-1}\big(\{u_k>0\}\cap Z_0\big)\\
&\qquad+\frac{L_2\rho_k}{{\lstar_k(0)}^{p_k^--1}(1-C^{**}\rho_k^{\alpha^*})^{p_k^+-1}}
\big|\{u_k>0\}\cap Z_+\big|.
\end{aligned}
\end{equation}

Now, we use the excess area formula Lemma 7.5 in \cite{AC} (with
$E_k=\{u_k>0\}\cup Z_-$) that states that, since $F(y_0)>g(y_0)$,
 \begin{equation}\label{eq-3b}
 {\mathcal H}^{N-1}\big(\partial_{\rm{red}}E_k\cap Z\big)\ge {\mathcal H}^{N-1}(Z_0)+c\sigma_k^2
 \end{equation}
for $k$ large. 

  Therefore, since there holds $Z\cap\partial E_k= \big(Z_+\cap\partial\{u_k>0\}\big)\cup \big(Z_0\cap\{u_k=0\}\big)$ and \eqref{eq-3b}, 
	we obtain
\begin{equation}\label{eq-3c}
\begin{aligned}
{\mathcal
H}^{N-1}\big(Z_+\cap\partial_{\rm{red}}\{u_k>0\}\big)&\ge
{\mathcal H}^{N-1}\big(Z\cap\partial_{\rm{red}}E_k\big)-{\mathcal H}^{N-1}\big(Z_0\cap\{u_k=0\}\big)\\
&\ge{\mathcal H}^{N-1}\big(Z_0\big)+c\sigma_k^2-{\mathcal H}^{N-1}\big(Z_0\cap\{u_k=0\}\big)\\
&={\mathcal H}^{N-1}\big(Z_0\cap\{u_k>0\}\big)+c\sigma_k^2.
\end{aligned}
\end{equation}

From here, using the   facts that
 \[
 {\lstar_k(0)}^{p_k^+-p_k^-}\Big(\frac{1+\tau_k}{1-C^{**}\rho_k^{\alpha^*}}\Big)^{p_k^+-1}
-1\le C_0\big(\tau_k+\rho_k^{\alpha^*}\big)
\]
and 
\[
\frac{L_2\rho_k}{{\lstar_k(0)}^{p_k^--1}(1-C^{**}\rho_k^{\alpha^*})^{p_k^+-1}}\le
C_1\rho_k,
\]
together with $|\{u_k>0\}\cap Z_+|\le |B_1|\le C$,  ${\mathcal H}^{N-1}(\{u_k>0\}\cap Z_0)\le{\mathcal H}^{N-1}(Z_0)\le C$, \eqref{eq-3a} and \eqref{eq-3c},   we get
\[
c\sigma_k^2\le CC_0(\tau_k+\rho_k^{\alpha^*})+CC_1\rho_k\le
C_2(\tau_k+\rho_k^{\alpha^*}).
\]

This is a contradiction to our assumptions that
$C^*\rho_k^{\alpha^*}\le \lstar_k(0) \tau_k$ and
$\frac{\tau_k}{\sigma_k^2}\to0$.
\end{proof}

The following lemma was proved in \cite{ACF} with $c=1$. The
result is obtained by rescaling the $h$ variable.
\begin{lemm}\label{lema 5} Let $w(y,h)$ be such that
\begin{enumerate}
\item[(a)] $\sum_{i=1}^{N-1}w_{y_iy_i}+c\,w_{hh}=0$ in
$B_1\cap\{h<0\}$ with $c>0$.

\item[(b)] $w(y,h)\to g$ in $L^1$ as $h\nearrow 0$.

\item[(c)] $g$ is subharmonic and continuous in $B_1'$, $g(0)=0$.

\item[(d)] $w(0,h)\le C|h|$.

\item[(e)] $w\ge -C$.
\end{enumerate}

Then, there exists $C_0$ depending only on $C$, N and  $c$ such that,
for every $y\in B_{1/2}'$,
\[
\int_0^{1/2}\frac1{r^2}\Big(\fint_{\partial
B_r'(y)}\,g(z)d{\mathcal H}^{N-2}\Big)\,dr\le C_0.
\]
\end{lemm}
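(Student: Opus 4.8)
The plan is to reduce this to the classical (constant-coefficient) estimate of Alt--Caffarelli--Friedman by absorbing the constant $c>0$ in (a) into the geometry. First I would introduce the change of variables $\tilde h = \sqrt{c}\,h$ and set $\tilde w(y,\tilde h) := w(y,\tilde h/\sqrt{c})$. Since $\partial_{\tilde h}^2 \tilde w = \frac1c\, w_{hh}$, condition (a) becomes $\sum_{i=1}^{N-1}\tilde w_{y_iy_i} + \tilde w_{\tilde h\tilde h} = 0$, i.e.\ $\tilde w$ is harmonic in the appropriate half-ball; condition (b) is preserved in the limit $\tilde h \nearrow 0$ with the same trace $g$; condition (c) is unchanged; (d) becomes $\tilde w(0,\tilde h) \le \frac{C}{\sqrt c}|\tilde h|$; and (e) is unchanged. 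Note that the new domain where $\tilde w$ is harmonic is $\{|y|^2 + \tilde h^2/c < 1,\ \tilde h<0\}$, which contains $B_{r_0}\cap\{\tilde h<0\}$ for $r_0 = \min\{1,\sqrt c\}$; after a further harmless rescaling in all variables we may arrange that $\tilde w$ is harmonic in, say, $B_1\cap\{\tilde h<0\}$ at the cost of replacing the constant in (d) by one depending on $C$ and $c$, and shrinking the ball on which the conclusion is asserted (which only changes $C_0$).

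Having done this, I would invoke the lemma from \cite{ACF} (the $c=1$ case), which states precisely that under (a)--(e) with $c=1$ one has
\[
\int_0^{1/2}\frac1{r^2}\Big(\fint_{\partial B_r'(y)} g(z)\,d{\mathcal H}^{N-2}\Big)\,dr\le C_0
\]
for all $y\in B_{1/2}'$, with $C_0$ depending only on $C$ and $N$. Applying this to $\tilde w$ and its trace $g$ yields the same inequality with a constant $C_0$ now depending on $C$, $N$ and $c$ (through the rescaling). Since the trace $g$ is unchanged by the vertical dilation, the conclusion transfers verbatim to $w$.

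I would then track the dependence of $C_0$ on the parameters: the $\tilde h$-dilation multiplies the Lipschitz-type bound in (d) by $c^{-1/2}$, and the spatial rescaling needed to restore the unit half-ball contributes a factor depending only on $\min\{1,\sqrt c\}$; the subharmonicity and continuity of $g$ and the normalization $g(0)=0$ are scale-invariant. Hence $C_0 = C_0(C,N,c)$ as claimed. The main (indeed only) obstacle is bookkeeping: making sure the anisotropic dilation genuinely turns (a) into Laplace's equation on a region still containing a fixed half-ball, and that none of the hypotheses (b)--(e) degenerate under it --- in particular that the $L^1$ convergence in (b) survives the vertical dilation, which it does since dilation in one variable is an $L^1$-bounded operation on each slice. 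Once that is checked, the result is immediate from the cited constant-coefficient lemma.
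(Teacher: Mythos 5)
Your strategy --- absorb the constant $c$ by a dilation of the $h$-variable and then quote the $c=1$ lemma of \cite{ACF} --- is exactly the paper's proof, which consists of precisely this remark. However, your dilation goes the wrong way. With $\tilde w(y,\tilde h)=w(y,\tilde h/\sqrt c)$ you have $\tilde w_{\tilde h\tilde h}=\tfrac1c w_{hh}$, so that $\sum_i\tilde w_{y_iy_i}+\tilde w_{\tilde h\tilde h}=\sum_i w_{y_iy_i}+\tfrac1c w_{hh}=\big(\tfrac1c-c\big)w_{hh}$, which does not vanish unless $c=1$; equivalently, under your substitution hypothesis (a) becomes $\sum_i\tilde w_{y_iy_i}+c^2\,\tilde w_{\tilde h\tilde h}=0$, which is no better than the original. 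The correct substitution is the reciprocal one, $\tilde w(y,\tilde h):=w(y,\sqrt c\,\tilde h)$ (i.e.\ $\tilde h=h/\sqrt c$), which does give $\sum_i\tilde w_{y_iy_i}+\tilde w_{\tilde h\tilde h}=0$; then (d) becomes $\tilde w(0,\tilde h)\le C\sqrt c\,|\tilde h|$ and the harmonicity domain is $\{|y|^2+c\,\tilde h^2<1,\ \tilde h<0\}$, which contains $B_{r_0}\cap\{\tilde h<0\}$ with $r_0=\min\{1,1/\sqrt c\}$, so the two cases in your bookkeeping trade places. This is an easily repaired slip, but as written the pivotal identity of your argument is false.

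A second point needs attention: the clause ``after a further harmless rescaling in all variables \dots\ shrinking the ball on which the conclusion is asserted (which only changes $C_0$)'' is not accurate when $r_0<1$. If you dilate all variables by $r_0$ to land in the unit half-ball, the trace becomes $g(r_0\,\cdot)$, and the cited lemma then yields the estimate only for $y\in B'_{r_0/2}$ and for the truncated integral $\int_0^{r_0/2}$, not for all $y\in B'_{1/2}$ and $\int_0^{1/2}$; what changes is the domain of validity of the conclusion, not merely the constant. Moreover you cannot simply translate the lemma to other centers to restore the full range, because hypothesis (d) is tied to the origin. So for the range of $c$ in which the ellipsoidal domain does not contain the unit half-ball you must supply an extra argument: for instance, control the tail $\int_{cr_0}^{1/2}r^{-2}\fint_{\partial B'_r(y)}g\,d{\mathcal H}^{N-2}\,dr$ separately, or observe that the proof in \cite{ACF} only uses harmonicity of $w$ in a thin half-neighborhood $B'_{3/4}\times(-\delta,0)$ of the flat face together with (b), (d), (e) --- which the purely vertical rescaling does preserve, making the isotropic rescaling unnecessary. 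With the substitution corrected and this transfer step addressed, the proof coincides with the paper's intended one.
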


Then, we have
\begin{lemm}\label{lema 6} Let $u_k, p_k, \lstar_k, f_k, \rho_k, \sigma_k$ as in Lemma \ref{lema 4}. Let $F_k^+, F_k^-$ and $F$ as in that lemma.
There exists $C=C(N, p_{\min},p_{\max}, \lone, \ltwo)$ such that, if $y_0\in B_{1/2}'$,
\begin{equation}\label{F}
\int_0^{1/4}\frac1{r^2}\Big(\fint_{\partial
B_r'(y_0)}\big(F-F(y_0)\big)\,d{\mathcal H}^{N-2}\Big)\,dr\le C.
\end{equation}
\end{lemm}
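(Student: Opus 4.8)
The plan is to realize the estimate \eqref{F} as an application of Lemma \ref{lema 5}, once we produce an appropriate limit function $w$ of a rescaled blow-up sequence. First I would fix $y_0\in B_{1/2}'$; after a translation in $y$ we may assume $y_0=0$ (and subtract the constant $F(y_0)$, which by Lemma \ref{lema 4}(1) is the common value $\lim F_k^{\pm}(0)$). The idea is to consider the rescaled functions
\[
w_k(y,h):=\frac{u_k(y,\sigma_k h)-\lstar_k(0)\,(\sigma_k h)^-}{\sigma_k^2}
\]
(this is the standard non-homogeneous blow-up that already appears implicitly in Lemma \ref{lema 4}: note that, on $\{x_N<-\sigma_k\}$, say, $u_k$ is positive and $p_k$-harmonic-like, so $w_k$ is an approximate solution of a linear uniformly elliptic equation). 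The flatness hypotheses $u_k\in F(\sigma_k,\sigma_k;\tau_k)$ with $\sigma_k\to0$, $\tau_k/\sigma_k^2\to0$, together with the interior gradient bounds from \cite{Fan} and Lemma \ref{lema 2}/Lemma \ref{lema 3} applied on compact subsets of $\{x_N<0\}$, give that, along a subsequence, $w_k\to w$ uniformly on compact subsets of $B_1\cap\{h<0\}$.

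The second step is to verify that the limit $w$ satisfies hypotheses (a)--(e) of Lemma \ref{lema 5}. For (a): on any compact $\mathcal K\subset\subset B_1\cap\{h<0\}$ one has $u_k>0$ and $\Delta_{p_k}u_k=f_k$ for $k$ large; by Lemma \ref{lema 2} $|\nabla u_k|\ge \lstar_k(0)(1-\delta)$ there, so linearizing the $p_k(x)$-Laplacian around the constant vector $-\lstar_k(0)e_N$ (as in the construction in the proof of Lemma \ref{lema 1}, with coefficients $b_{ij}=\delta_{ij}+(p_k(x)-2)\frac{(u_k)_{x_i}(u_k)_{x_j}}{|\nabla u_k|^2}$ and lower-order terms of size $O(\rho_k)$) yields in the limit a constant-coefficient equation $\sum_{i=1}^{N-1}w_{y_iy_i}+c\,w_{hh}=0$ with $c=p(x_0)-1>0$ (the right-hand side $|\nabla u_k|^{2-p_k}f_k$ and the first-order terms contribute $O(\rho_k)\to0$ after the $1/\sigma_k^2$ rescaling, using $|f_k|\le L_2\rho_k$). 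For (b): as $h\nearrow0$, the free-boundary graphs $\sigma_k F_k^{\pm}$ pinch $\partial\{u_k>0\}$, so $w_k(\cdot,h)\to$ a trace that, by the definition of $F_k^\pm$ and Lemma \ref{lema 4}(1), equals $-\lstar_k(0)$ times a term whose limit is $-\lambda^*(x_0)F(y)$ up to a normalizing constant — i.e. $w(\cdot,h)\to g$ in $L^1$ with $g$ a positive multiple of $F$. For (c): $g$ is subharmonic by Lemma \ref{lema 4}(2) and continuous with $g(0)=0$. For (d): $w(0,h)\le C|h|$ follows from property (2) of Definition \ref{weak2} (the linear growth bounds $c_{\min}\le\frac1r\sup_{B_r}u\le C_{\max}$) together with $u_k\in F(\sigma_k,\sigma_k;\tau_k)$, which sandwiches $u_k$ between hyperplanes of slope $\lstar_k(0)$ up to $O(\sigma_k)$ errors, making $w_k(0,h)$ bounded by $C|h|$. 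For (e): $w\ge -C$ follows from $F_k^+\le 1$, i.e. the free boundary lies in $\{|x_N|\le\sigma_k\}$, so $w_k\ge u_k/\sigma_k^2\ge 0$ on the relevant set away from a bounded region, and the growth bound controls the rest.

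Finally, I would apply Lemma \ref{lema 5} to $w$: it gives a constant $C_0=C_0(C,N,c)$, hence $C_0=C_0(N,p_{\min},p_{\max},\lone,\ltwo)$ since $c=p(x_0)-1\in[p_{\min}-1,p_{\max}-1]$ and the constant $C$ in (d), (e) depends only on $\lone,\ltwo$ and the flatness normalization, with
\[
\int_0^{1/2}\frac1{r^2}\Big(\fint_{\partial B_r'(0)}g(z)\,d{\mathcal H}^{N-2}\Big)\,dr\le C_0.
\]
Since $g$ is a fixed positive multiple of $F$ (the normalizing constant being bounded above and below in terms of $\lone,\ltwo$), dividing through and restricting to radii $r\le 1/4$ gives \eqref{F} with $C=C(N,p_{\min},p_{\max},\lone,\ltwo)$. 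I expect the main obstacle to be Step 2(b)--(d): carefully justifying that the rescaled traces $w_k(\cdot,h)$ converge to a multiple of $F$ (reconciling the two-sided graphs $F_k^\pm$ in the limit via Lemma \ref{lema 4}(1)) and that the linear growth estimate $w(0,h)\le C|h|$ survives the $\sigma_k^{-2}$ scaling — this is exactly where the hypothesis $\tau_k/\sigma_k^2\to0$ and the nondegeneracy/flatness of Definition \ref{def-flat} must be used in a quantitative way, uniformly in $k$.
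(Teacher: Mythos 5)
Your overall strategy (build a limit function $w$ and feed it into Lemma \ref{lema 5}, with $c=p_0-1$ and trace a multiple of $F$) is the right one, and it is the paper's strategy, but the blow-up you propose is not the correct one and the argument breaks at that point. The deviation of $u_k$ from the linear profile is of order $\sigma_k$, not $\sigma_k^2$: in the strip $|x_N|\lesssim\sigma_k$ one has $u_k(y,x_N)+\lstar_k(0)x_N\approx\lstar_k(0)\,\sigma_kF_k(y)$, so your $w_k(y,h)=\sigma_k^{-2}\big(u_k(y,\sigma_kh)-\lstar_k(0)(\sigma_kh)^-\big)$ is of size $\sigma_k^{-1}$ and has no finite limit; hypotheses (d) and (e) of Lemma \ref{lema 5} cannot be verified for it. Moreover, the anisotropic substitution $h\mapsto\sigma_kh$ destroys exactly the structure Lemma \ref{lema 5} needs: second derivatives in $h$ pick up a factor $\sigma_k^{-2}$ relative to the tangential ones, so the linearized equation $\sum_{i<N}\partial_{y_iy_i}+(p_k-1)\partial_{NN}$ degenerates in the limit to $w_{hh}=0$ rather than to $\sum_{i=1}^{N-1}w_{y_iy_i}+c\,w_{hh}=0$; and for fixed $h<0$ the points $(y,\sigma_kh)$ collapse onto the free boundary strip, so you never obtain a solution of a uniformly elliptic equation on the fixed half-ball $B_1\cap\{h<0\}$, which is the domain Lemma \ref{lema 5} requires.

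The paper instead keeps the vertical variable at unit scale: after recentering at $(y_0,\sigma_kF_k^+(y_0))$ and dilating by $2$ (so that $\bar u_k\in F(4\sigma_k,4\sigma_k;\tau_k)$ and the relevant subharmonic function becomes $2(F(y_0+\tfrac12y)-F(y_0))$ — a vertical recentering your reduction omits), it sets $w_k(y,h)=\big(u_k(y,h)+\lstar_k(0)h\big)/\sigma_k$ on $B_1^-$. Boundedness of $w_k$ uses $\tau_k/\sigma_k\to0$; interior $C^{1,\alpha}$ and $W^{2,q}$ estimates (valid on $\{h\le-C_\delta\sigma_k\}$ by Lemma \ref{lema 3}) give a limit $w$ solving $\sum_{ij}c_{ij}w_{x_ix_j}=0$ in $B_1^-$ with $c_{ij}=\delta_{ij}+(p_0-2)\delta_{iN}\delta_{jN}$; the bound $w(0,h)\le0$ comes from $|\nabla u_k|\le\lstar_k(0)(1+\tau_k)$ and $\tau_k/\sigma_k\to0$ (not from Definition \ref{weak2}(2) as you suggest). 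The genuinely hard step — showing $w(y,h)\to\lstar_0F(y)$ as $h\to0^-$, which is hypothesis (b) of Lemma \ref{lema 5} — is handled by combining the thin-strip convergence $w_k(y,\sigma_kh)\to\lstar_0F(y)$ with an explicit comparison function $\phi_\ep$ solving $\sum c_{ij}\partial_{x_ix_j}\phi_\ep=1$, checked to be a subsolution of ${\mathcal T}_k$ below the level $h=-K\sigma_k$ for $K$ large. This is precisely the step you flag as ``the main obstacle'' and leave unargued, so even apart from the scaling issue the proposal has a genuine gap there.
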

\begin{proof}
The proof follows the lines of the previously cited papers.
The idea is that the function $2\big(F(y_0+\frac12 y)-F(y_0)\big)$ will take the place of the function $g$ in Lemma \ref{lema 5}.

We
write down the proof for the reader's convenience since we cannot
assume that $\lstar_k(0)=1$ and we have a right hand side in the
equation that was not present in the previous papers. We let
$y_0\in B_{1/2}'$ and consider the functions $\bar
u_k(y,h)=2u_k(y_0+\frac12y, \sigma_kF_k^+(y_0)+\frac12h)$ in
$B_1$. From the fact that $u_k\in F(\sigma_k,\sigma_k;\tau_k)$ in $B_1$ we deduce that $\bar u_k\in
F(4\sigma_k,4\sigma_k;\tau_k)$ in  $B_1$.

In fact, we denote $(x',x_N)=(y_0+\frac12y, \sigma_kF_k^+(y_0)+\frac12h)$ and recall that $|F_k^+|\le 1$ . Then we have for $y\in B_1'$, $h>4\sigma_k$ that
$x_N>\sigma_kF_k^+(y_0)+2\sigma_k\ge \sigma_k$ implying that $\bar u_k(y,h)=0$.

On the other hand, for $y\in B_1'$, $h<-4\sigma_k$  we have 
$x_N<\sigma_kF_k^+(y_0)-2\sigma_k\le -\sigma_k$. This implies that $\bar u_k(y,h)=2u_k(x',x_N)\ge -2\lstar_k(0)[x_N+\sigma_k]\ge 
-\lstar_k(0)[h+4\sigma_k]$.

Finally, we see that $|\nabla \bar u_k(y,h)|=|\nabla  u_k(y_0+\frac12y, \sigma_kF_k^+(y_0)+\frac12h)|\le \lstar_k(0)(1+\tau_k)$ and we 
conclude that $\bar u_k\in F(4\sigma_k,4\sigma_k;\tau_k)$ in  $B_1$.

\medskip

Observe that by this change of variables the  function $F_k^+(y)$ has been replaced by $2\big(F_k^+(y_0+\frac12y)-F_k^+(y_0)\big)$.

\medskip

Thus, from now on we may assume that $u_k\in
F(4\sigma_k,4\sigma_k;\tau_k)$ in $B_1$ and $y_0=0$. Let
\[
w_k(y,h)=\frac{u_k(y,h)+\lstar_k(0) h}{\sigma_k}.
\]
Then, given $0<\delta<\frac12$, we take $k\ge k_\delta$ so that
$\lstar_k(0)/2\le |\nabla u_k|\le 2\lstar_k(0)$ in
$B_{1-\delta}\cap\{h\le -C_\delta\sigma_k\}$ with $C_\delta$ the
constant in Lemma \ref{lema 3} with $R=1-\delta$. We have
\begin{equation}\label{Tk}
{\mathcal
T}_kw_k:=\sum_{ij}b_{ij}^k(x){w_k}_{x_ix_j}+\sum_jb_j^k(x){w_k}_{x_j}= \frac{b_N^k}{\sigma_k}\lstar_k(0)+
\frac{f_k}{\sigma_k}|\nabla u_k|^{2-p_k}\quad\mbox{in}\quad
B_{1-\delta}\cap\{h\le -C_\delta\sigma_k\}.
\end{equation}
Here
$b^k_{ij}(x)=\delta_{ij}+(p_k(x)-2)\frac{{u_k}_{x_i}{u_k}_{x_j}}{|\nabla
u_k|^2}$ and $b_j^k(x)={p_k}_{x_j}\log|\nabla u_k|$. Therefore,
${\mathcal T}_k$ is a uniformly elliptic operator with ellipticity
and bounds of the coefficients independent of $k$. Namely, they
satisfy \eqref{bij} and
\[
|b^k|\le \bar C_0\rho_k
\]
(see \eqref{bj}).

On the other hand, the right hand side satisfies
\begin{equation}\label{cota-rhs}
\frac{b_N^k}{\sigma_k}\lstar_k(0)+\frac{f_k}{\sigma_k}|\nabla u_k|^{2-p_k}\le
K_0\frac{\rho_k}{\sigma_k}\to0\quad\mbox{as}\quad
k\to\infty.
\end{equation}

We will divide the proof into several steps.

(i) We prove that there exists a constant $C>0$ such that $\|w_k\|_{L^\infty(B^-_1)}\le C$.

In fact, recall that $u_k\in F(4\sigma_k,4\sigma_k;\tau_k)$ in $B_1$ so $u_k(0,0)=0$ and $|\nabla u_k|\le \lstar_k(0)(1+\tau_k)$. On the other hand, there holds that $u_k(y,h)=0$ if $h\ge 4\sigma_k$. Therefore,
 \[
 u_k(y,h)\le \lstar_k(0)(1+\tau_k)(4\sigma_k-h)
 \]
 so that, if $-K\le h\le 0$,
 \[
 w_k(y,h)\le 4\lstar_k(0)(1+\tau_k)-\lstar_k(0)\frac{\tau_k}{\sigma_k}h\le C.
 \]

On the other hand, if $h<-4\sigma_k$, since $u_k\in F(4\sigma_k,4\sigma_k;\tau_k)$ in $B_1$, by \eqref{item-flat-2} in Definition  \ref{def-flat},
\[
w_k(y,h)=\frac{u_k(y,h)+\lstar_k(0) h}{\sigma_k}\ge-\frac{\lstar_k(0)(h+4\sigma_k)-\lstar_k(0)h}{\sigma_k}=-4\lstar_k(0).
\]

Finally, if $-4\sigma_k\le h\le 0$,
\[\begin{aligned}
w_k(y,h)&\ge - \frac{\lstar_k(0)(1+\tau_k)(4\sigma_k-h)-\lstar_k(0)h}{\sigma_k}\\
&=-4\lstar_k(0)(1+\tau_k)+\frac{\lstar_k(0)(2+\tau_k)h}{\sigma_k}\\
&\ge -C.
\end{aligned}
\]

\medskip

(ii) Uniform bounds of first and second order derivatives.

Recall that $w_k$ satisfies \eqref{Tk} that is uniformly elliptic with ellipticity constants and bounds of the coefficients independent of $k$ in $B_{1-\delta}\cap\{h<-C_\delta\sigma_k\}$. By step (i) we then have
\begin{equation}\label{C1alpha}
\|\ w_k\|_{{C^{1,\alpha}}({\mathcal K})}\le C_{\mathcal K}\quad\forall\ \
{\mathcal K}\subset\subset B_1^-.
\end{equation}
and, for every $1<q<\infty$,
\begin{equation}\label{W2q}
\|\ w_k\|_{{W^{2,q}}({\mathcal K})}\le C_{\mathcal K}\quad\forall\ \
{\mathcal K}\subset\subset B_1^-.
\end{equation}

Hence, for a subsequence that we still call $w_k$, there exists $w\in C^{1,\alpha}\cap W^{2,q}$ such that $w_k\to w$ in $C^1({\mathcal K})$ and weakly in $W^{2,q}({\mathcal K})$ for every ${\mathcal K}\subset\subset B_1^-.$

\medskip

(iii) Determining the equation satisfied by $w$.

\medskip

Let $c_{ij}=\delta_{ij}+(p_0-2)\delta_{iN}\delta_{jN}$ where $p_{\min}\le
p_0\le p_{\max}$ is the uniform limit of the sequence of functions
$p_k$ (for a subsequence). Then,  $b^k_{ij}\to c_{ij}$ uniformly
on compact subsets of $B_1^-$. In fact, by the uniform estimates
of the gradient of $w_k$ we have that
\begin{equation}\label{lim-grad}
\big|\nabla u_k(y,h)+\lambda_k^*(0)e_N\big|=\big|\nabla\big(u_k(y,h)+\lambda_k^*(0)h\big)\big|\le C_{\mathcal K}\sigma_k
\end{equation}
if $k\ge k_{\mathcal K}$ and $\mathcal K\subset\subset B_1^-$.

Let $\lambda_0^*=\lim_{k\to\infty}\lambda_k^*(0)$ (for a
subsequence). Then, by \eqref{lim-grad} $\nabla u_k\to -\lambda_0^*e_N$ uniformly on
compact subsets of $B_1^-$. Since
$\lambda_0^*\ge\lambda_{\min}>0$, there holds that
\[
\frac{{u_k}_{x_i}{u_k}_{x_j}}{|\nabla u_k|^2}\to \delta_{iN}\delta_{jN}
\]
uniformly on compact subsets of $B_1^-$. And we have proved the convergence.

On the other hand, $|b^k_j(x)|\le C_0 \sigma_k$. Therefore, by passing to the limit in \eqref{Tk} we get
\begin{equation}\label{eq-w}
\sum_{ij}c_{ij}w_{x_ix_j}=0\quad\mbox{in}\quad B_1^-.
\end{equation}

\medskip

(iv) Bounds of $w$.

Recalling that $|\nabla u_k|\le \lambda_k^*(0)(1+\tau_k)$, we get
\begin{equation}\label{bound-partial-wk}
 \frac\partial{\partial h}w_k(y,h)\ge-\frac{\lstar_k(0)(1+\tau_k)-\lstar_k(0)}{\sigma_k}=-\lstar_k(0)\frac{\tau_k}{\sigma_k}.
 \end{equation}
 Thus, for $h<0$,
 \begin{equation}\label{bound-negative}
  w_k(0,h)\le \lstar_k(0)\frac{\tau_k}{\sigma_k}|h|\to0\quad\mbox{as}\quad k\to\infty.
\end{equation}

Passing to the limit, we find that
\[
w(0,h)\le 0\mbox\quad{for}\quad h<0.
\]

\medskip

(v) Let us see that $w(y,h)
\to \lambda_0^*F(y)$ as $h\to0^-$, uniformly in $B'_{1-\delta}$ for every $0<\delta<1$.

First, as in \cite{ACF,DP1}, we can prove that
\begin{equation}\label{limitwk}
w_k(y,\sigma_k h)-\lstar_0F(y)\to 0\quad\mbox{uniformly in}\quad
B'_{1-\delta}\times[-K,-2C_\delta]
\end{equation}
for every $K>2C_\delta$ and every $0<\delta<1$. We omit this
proof, that relies heavily on Proposition \ref{prop-flatness} (see
\cite{ACF} for the proof).

In order to get the result,  following the ideas in \cite{ACF,DP1}, we construct a
barrier. First, for $\delta>0$ we let $\Omega_\delta$ a smooth
domain such that
\[
B_{1-2\delta}^-\subset\Omega_\delta\subset B_{1-\delta}^-.
\]
For $\ep>0$ small, we let $g_\ep\in C^3(\partial\Omega_\delta)$
such that $\|g_\ep\|_{C^3(\partial\Omega_\delta)}\le C$ with $C$
independent of $\ep$ and $\delta$ and
\[
\begin{matrix}
\lstar_0F-2\ep\le&g_\ep\le \lstar_0F-\ep &\qquad\qquad\mbox{in }\partial\Omega_\delta\cap\partial B_{1-3\delta}^-\cap\{h=0\}\\
&g_\ep\le \lstar_0F-\ep &\mbox{in }\partial\Omega_\delta\cap\{h=0\}\\
&\hskip-12pt g_\ep\le w-\ep &\mbox{in
}\partial\Omega_\delta\cap\{h<0\}.
\end{matrix}
\]
Then, we let $\phi_\ep$ the solution to
\[
\begin{cases}
\sum_{ij}c_{ij}{\phi_\ep}_{x_ix_j}=1\quad&\mbox{in}\quad \Omega_\delta\\
\phi_\ep= g_\ep\quad&\mbox{on}\quad\partial\Omega_\delta
\end{cases}
\]
with $c_{ij}$ as in \eqref{eq-w}.

On one hand, if $k\ge k(\ep,\delta)$,
\[
\phi_\ep\le
w_k\quad\mbox{on}\quad\partial\Omega_\delta\cap\{h<-2C_\delta\sigma_k\}.
\]

On the other hand, since
$\|\phi_\ep\|_{C^2(\overline{\Omega_\delta})}\le C$, there holds
that, for  $K>2C_\delta$  and $k\ge k(\ep,\delta,K)$,
\[
\phi_\ep\le w_k\quad\mbox{on}\quad\Omega_\delta\cap\{h=
-K\sigma_k\}.
\]

Recall that, by Lemma \ref{lema 3}, we have  
\[
|\nabla u_k|\ge \frac{\lstar_k(0)}2\quad\mbox{in}\quad
B_{1-\delta}\cap\{h<-C_\delta\sigma_k\}
\]
and there holds \eqref{Tk} and \eqref{cota-rhs}. Therefore,
\[
{\mathcal T}_kw_k\le K_0\frac{\rho_k}{\sigma_k}\le
\frac12\quad\mbox{in}\quad\Omega_\delta\cap\{h<-K\sigma_k\}
\]
if $k\ge k_0$.

Let us see that
\begin{equation}\label{phi-ep}
{\mathcal T}_k\phi_\ep\ge
\frac12\quad\mbox{in}\quad\Omega_\delta\cap\{h<-K\sigma_k\}
\end{equation}
if $K$ is large independently of $\ep$ and $k$ is large
independently of $\ep$ and $K$. In fact, for $x\in\Omega_\delta$,
\[
\begin{aligned}
{\mathcal
T}_k\phi_\ep&=\sum_{ij}c_{ij}{\phi_\ep}_{x_ix_j}+\sum_{ij}\big(b^k_{ij}(x)-c_{ij}\big)
{\phi_\ep}_{x_ix_j}+\sum_jb^k_j(x){\phi_\ep}_{x_j}\\
&\ge
1-\|D^2\phi_\ep\|_{L^\infty}\sum_{ij}\|b_{ij}^k-c_{ij}\|_{L^\infty}-\|b^k\|_{L^\infty}\|\nabla\phi_\ep\|_{L^\infty}.
\end{aligned}
\]

On one hand, $\|b^k\|_{L^\infty}\le C_0\sigma_k\to0$ as
$k\to\infty$. On the other hand, by elliptic estimates up to the boundary $\{h=-K\sigma_k\}$, since we have proved that $|w_k|\le C$,
\[
\begin{aligned}
\|\nabla &(u_k+\lstar_k(0)h)\|_{L^\infty(\{h\le -K\sigma_k\})}={\sigma_k}\|\nabla w_k\|_{L^\infty(\{h\le -K\sigma_k\})}\\
&\le \sigma_k
C\frac{\rho_k/\sigma_k+1}{(K-C_\delta)\sigma_k}\le \frac
{2C}{K-C_\delta}\quad\mbox{in}\quad\Omega_\delta\cap\{h<-K\sigma_k\}.
\end{aligned}
\]

 Then, as $\frac{\lstar_k(0)}2\le
|\nabla u_k|\le 2\lstar_k(0)$  in that set and $p_k(x)-p_0\to0$ uniformly in $B_1$,
\[
\|b_{ij}^k-c_{ij}\|_{L^\infty(B_{1}\cap\{h\le-K\sigma_k\})}\le
\frac C{K-C_\delta}+o_k(1).
\]

We conclude, by taking $K$ large enough independent of $k$ and
$\ep$ and then, $k$ large,  that \eqref{phi-ep} holds.

Therefore, $\phi_\ep\le w_k$ in
$\Omega_\delta\cap\{h\le-K\sigma_k\}$. By letting $k\to\infty$ we
find that $\phi_\ep\le w$ in $\Omega_\delta\cap\{h<0\}$ and then,
by letting $h\to0^-$,
\[
\liminf_{h\to0^-}w(y,h)\ge\lim_{h\to0^-}\phi_\ep(y,h)\ge\lstar_0F(y)-2\ep\quad\mbox{for}\quad
y\in B'_{1-3\delta}.
\]

In order to get a bound from above, we recall
\eqref{bound-partial-wk} and get,
\[
w_k(y,h)-w_k(y,-K\sigma_k)\le
-C\frac{\tau_k}{\sigma_k}|h|\quad\mbox{if}\quad h\le-K\sigma_k.
\]

On the other hand, $w_k(y,-K\sigma_k)\to \lstar_0F(y)$ uniformly
in $B'_{1-\delta}$. Hence, if $k$ is large, and $(y,h)\in
B_{1-\delta}^-\cap\{h\le -K\sigma_k\}$,
\[
w_k(y,h)\le \lstar_0F(y)+2\ep
\]
and we deduce that, for $(y,h)\in B_{1-\delta}^-$,
\[
w(y,h)\le \lstar_0F(y)+2\ep.
\]
Therefore,
\[
\limsup_{h\to0^-}w(y,h)\le \lstar_0F(y)+2\ep\quad\mbox{uniformly
in}\quad B_{1-\delta}'.
\]

Since $\ep$ is arbitrary, we conclude that, for every $0<\delta<1$,
\[
\lim_{h\to0^-}w(y,h)=\lstar_0F(y)\quad\mbox{uniformly for }y\in
B_{1-3\delta}'.\]

\medskip

(vi) Final step.

We apply Lemma \ref{lema 5} to the function $w$ and recall that when writing $w(y,0)$ in the original variables we get
$2\big(F(y_0+\frac12y)-F(y_0)\big)$. So, the result is proved.
\end{proof}

\begin{coro} \label{coro-ell} Let $u_k, p_k, \lstar_k, f_k, \rho_k, \sigma_k$ and $F$ as in Lemma \ref{lema 4}.
There exists a constant $C=C(N, p_{\min}, p_{\max}, \lone, \ltwo)$ and, for every $0<\theta<1$ there exist $c_\theta=c_\theta(N, p_{\min}, p_{\max}, \lone, \ltwo, \theta)$, a ball $B'_r$ and $\ell\in \R^{N-1}$ such that
\[
c_\theta\le r\le \theta,\quad|\ell|\le C,\quad F(y)\le \ell\cdot
y+\frac\theta2r\quad\mbox{for }|y|\le r.
\]
\end{coro}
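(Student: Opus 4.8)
The plan is to use Lemma~\ref{lema 6} to pick a radius $r$ at which the spherical mean of $F$ over $\partial B'_r$ is small relative to $r$, to replace $F$ on $B'_r$ by the harmonic function with the same boundary data, to take $\ell$ to be the gradient of that harmonic function at the origin, and then to pass to a ball $B'_\rho$ with $\rho$ a small multiple of $r$ so that the quadratic remainder of the harmonic function becomes negligible. The only properties of $F$ that enter are the ones already produced: $F$ is continuous and subharmonic in $B'_1$ with $F(0)=0$ and $|F|\le 1$ (Lemma~\ref{lema 4}); $F$ is Lipschitz with a constant $L$ bounded in terms of $N$ and the structure constants (this is part of the blow-up analysis, coming from the flatness of the $u_k$ together with their nondegeneracy, as in \cite{AC,ACF}); and, evaluating Lemma~\ref{lema 6} at $y_0=0$ and using $F(0)=0$,
\[
\int_0^{1/4}\frac{M(r)}{r^2}\,dr\le C_0,\qquad M(r):=\fint_{\partial B'_r}F\,d\mathcal H^{N-2}\ge 0,
\]
with $C_0=C_0(N,p_{\min},p_{\max},\lone,\ltwo)$ (nonnegativity of $M$ and the bound $M(r)\ge F(0)$ being the sub-mean-value property).

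Fix $0<\theta<1$, a small parameter $\beta=\beta(\theta,N,L)$ proportional to $\theta$ (chosen below), and $\varepsilon:=\theta\beta/4$. Since $M\ge 0$, if $M(r)>\varepsilon r$ held for every $r\in[\,\tilde c,\tfrac18\,]$ with $\tilde c:=\tfrac18\,e^{-C_0/\varepsilon}$, then
\[
C_0\ \ge\ \int_{\tilde c}^{1/8}\frac{M(r)}{r^2}\,dr\ >\ \varepsilon\int_{\tilde c}^{1/8}\frac{dr}{r}\ =\ \varepsilon\log\frac{1}{8\tilde c}\ =\ C_0,
\]
a contradiction; hence there is a ``good'' radius $r\in[\,\tilde c,\tfrac18\,]$ with $M(r)\le\varepsilon r$. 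Now let $h$ be harmonic in $B'_r$ with $h=F$ on $\partial B'_r$. Since $F$ is continuous and subharmonic, $F\le h$ in $B'_r$; by the maximum principle and the Lipschitz bound, $\|h\|_{L^\infty(B'_r)}\le\sup_{\partial B'_r}|F|\le Lr$. Set $\ell:=\nabla h(0)$. Interior estimates for harmonic functions in $\R^{N-1}$ give $|\ell|\le C_N\,r^{-1}\|h\|_{L^\infty(B'_r)}\le C_NL=:C$, a constant of the required type; moreover $h(0)=\fint_{\partial B'_r}F=M(r)$ and, for $|y|\le r/2$,
\[
\big|h(y)-h(0)-\ell\cdot y\big|\ \le\ C_N\,r^{-2}\|h\|_{L^\infty(B'_r)}\,|y|^2\ \le\ C\,r^{-1}|y|^2 .
\]
Finally, put $\rho:=\beta r$; choosing $\beta$ small enough (of the form $c\,\theta$ with $c=c(N,L)$) that $\rho\le\theta$, that $\rho\le r/2$, and that $C\beta\le\theta/4$, we have $\rho\ge\beta\tilde c=:c_\theta$ with $c_\theta$ depending only on $\theta,N$ and the structure constants, and for $|y|\le\rho$,
\[
F(y)\ \le\ h(y)\ \le\ \ell\cdot y+M(r)+C\,\frac{\rho^2}{r}\ \le\ \ell\cdot y+\varepsilon r+C\beta^2 r\ \le\ \ell\cdot y+\frac{\theta\beta}{4}r+\frac{\theta\beta}{4}r\ =\ \ell\cdot y+\frac{\theta}{2}\rho ,
\]
which is the assertion with the ball $B'_\rho$ in place of $B'_r$.

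The main obstacle is the uniformity of $C$ in the slope bound $|\ell|\le C$: harmonic replacement only controls $|\nabla h(0)|$ by $r^{-1}\operatorname{osc}_{\partial B'_r}F$, so in order to keep this bounded independently of $\theta$ one must work on a ball on which $F$ itself has size $O(r)$ — this is precisely why the Lipschitz bound for $F$, and not merely $|F|\le 1$ or continuity, is indispensable and must be available before this step. The secondary difficulty is bookkeeping: the quadratic remainder of $h$ is comparable to $r$ unless one shrinks the ball, which forces the output radius $\rho$ to be a small $\theta$-dependent multiple of the good radius, so one must arrange the chain of choices (first $\beta$, then $\varepsilon$, then $\tilde c$, then $c_\theta$) so that the final lower bound $c_\theta$ depends only on $\theta$ and the structure constants. (When $N-1\in\{1,2\}$ the potential-theoretic computation behind Lemma~\ref{lema 6} involves the usual logarithmic kernel, but since that lemma is already in hand this is immaterial here.)
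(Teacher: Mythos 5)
You have a genuine gap, and it sits exactly at the point you yourself flag as ``the main obstacle'': the universal bound $|\ell|\le C$ is obtained from $\sup_{\partial B'_r}|F|\le Lr$, i.e.\ from a Lipschitz bound for $F$ with a structurally controlled constant, and no such bound is available. Lemma \ref{lema 4} gives only that $F$ is continuous, subharmonic, $F(0)=0$, $|F|\le 1$, and Lemma \ref{lema 6} gives only the integral estimate \eqref{F}; neither this paper nor \cite{AC}, \cite{ACF}, \cite{DP1} proves Lipschitz continuity of $F$ at this stage. It is not a consequence of ``flatness plus nondegeneracy of the $u_k$'': the non-homogeneous blow-up rescales the vertical variable by $1/\sigma_k$ while keeping the horizontal one fixed, so the free boundaries $\{(y,\sigma_k F_k^\pm(y))\}$ lying in a slab of width $\sim\sigma_k$ gives $|F|\le 1$ but no modulus in $y$ beyond continuity; a uniform Lipschitz bound on $F$ is essentially of the same nature as what the whole improvement-of-flatness scheme is designed to produce, so assuming it begs the question. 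Without it, harmonic replacement only yields $|\ell|=|\nabla h(0)|\le C_N r^{-1}\|F\|_{L^\infty}\le C_N/r\le C_N/c_\theta$, so $C$ degenerates as $\theta\to 0$ — and this is fatal for the way Corollary \ref{coro-ell} is used: Lemma \ref{lema 8}, and through it Lemma \ref{lema 10} and the iterations in Section 4, need $|\ell|\le C$ independent of $\theta$ precisely to get $|\nu-\bar\nu|\le C\sigma$.

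Note also that the Lipschitz claim cannot be patched from what is in hand: subharmonicity together with the two-sided mean estimates encoded in Lemma \ref{lema 6} (equivalently, the growth bound $\mu(B'_r(y))\le Cr^{N-2}$ for $\mu=\Delta F$) only gives log-Lipschitz--type control of the associated potential in the worst case, not a Lipschitz bound. The remaining skeleton of your argument (pigeonhole for a good radius from the estimate at $y_0=0$, harmonic replacement, $\ell=\nabla h(0)$, shrinking the ball to absorb the quadratic error and the term $M(r)\le\varepsilon r$) is sound, but observe that you use Lemma \ref{lema 6} only at the origin, whereas its statement at \emph{every} $y_0\in B'_{1/2}$ is exactly the extra information that the argument of \cite{AC}, Lemmas 7.7--7.8 --- to which the paper's proof defers --- exploits in order to control the linear term with a universal constant. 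To repair your proof you would need to derive, from the estimates at all points, a bound of the form $\fint_{\partial B'_r}|F|\,d{\mathcal H}^{N-2}\le C r$ (or an equivalent substitute) at the radius you select; as written, that step is missing and the conclusion $|\ell|\le C(N,p_{\min},p_{\max},\lone,\ltwo)$ does not follow.
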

\begin{proof}
The result is a consequence of Lemma \ref{lema 6} and the proof
follows as Lemmas 7.7 and 7.8 in \cite{AC}.
\end{proof}

Now, we apply the corollary to a weak flat solution $u$ if
$\sigma$ is small enough.
\begin{lemm}\label{lema 8} Let $p\in Lip(B_\rho)$, $\lstar\in C^{\alpha^*}(B_\rho)$, $f\in L^\infty(B_\rho)$ such that
$1<p_{\min}\le p(x)\le p_{\max}<\infty$, $0<\lone\le\lstar(x)\le \ltwo<\infty$ with $|\nabla p|\le L_1$, $|f|\le L_2$ and
$[\lstar]_{C^{\alpha^*}(B_\rho)}\le C^*$. Let $0<\theta<1$.
There exists $\sigma_\theta=\sigma_\theta(\theta, N, p_{\min}, p_{\max}, \lone, \ltwo, L_1, L_2, C^*)$ such that,
if $$u\in F(\sigma,\sigma;\tau)\mbox{ in }B_\rho \mbox{ in direction }\nu$$ with power $p$, slope $\lstar$ and rhs $f$ and, if $C^*\rho^{\alpha^*}\le \lstar(0)\tau$, $\sigma\le\sigma_\theta$ and $\tau\le \sigma_\theta \sigma^2$ there holds that
$$u\in F(\theta\sigma,1;\tau) \mbox{ in }B_{\bar\rho}\mbox{ in direction }\bar\nu$$
with the same power, slope and rhs and
\[
c_\theta\rho\le\bar\rho\le\theta\rho,\qquad |\nu-\bar\nu|\le
C\sigma.
\]

Here $c_\theta$ and $C$ are the constants in Corollary
\ref{coro-ell}.
\end{lemm}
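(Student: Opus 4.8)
The plan is to argue by contradiction through a non-homogeneous blow-up, in the spirit of \cite{AC,ACF,DP1}, using the machinery already assembled in this section. Fix $0<\theta<1$ and suppose the conclusion fails. Taking $\sigma_\theta=1/j$ in the statement, we obtain for each $j$ data $p_j$, $\lstar_j$, $f_j$ and $\rho_j>0$ satisfying the structure conditions, together with $u_j\in F(\sigma_j,\sigma_j;\tau_j)$ in $B_{\rho_j}$ in some direction $\nu_j$ with $C^*\rho_j^{\alpha^*}\le\lstar_j(0)\tau_j$, $\sigma_j\le1/j$ and $\tau_j\le\frac1j\sigma_j^2$, such that for no admissible pair $(\bar\rho,\bar\nu)$ (i.e.\ with $c_\theta\rho_j\le\bar\rho\le\theta\rho_j$ and $|\nu_j-\bar\nu|\le C\sigma_j$) does one have $u_j\in F(\theta\sigma_j,1;\tau_j)$ in $B_{\bar\rho}$ in direction $\bar\nu$. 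After the rescaling \eqref{reduction} and a rotation we may assume $\rho_j=1$, $\nu_j=e_N$ and $0\in\partial\{u_j>0\}$; the rescaled data keep the same constants $p_{\min},p_{\max},\lone,\ltwo$, with $|\nabla\bar p_j|\le L_1\rho_j\to0$, $|\bar f_j|\le L_2\rho_j\to0$, $[\bar\lstar_j]_{C^{\alpha^*}(B_1)}\le C^*\rho_j^{\alpha^*}\le\lstar_j(0)\tau_j$, while $\sigma_j\to0$ and $\tau_j/\sigma_j^2\le1/j\to0$. Thus the sequence $\{u_j\}$ is exactly of the type considered in Lemma \ref{lema 4}.

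Applying Lemma \ref{lema 4}, after passing to a subsequence we get $F_j^+\to F$ and $F_j^-\to F$ uniformly on $B_1'$, with $F$ continuous and subharmonic, $F(0)=0$ and $|F|\le1$. Feeding this $F$ and the given $\theta$ into Corollary \ref{coro-ell}, we obtain the constant $C$, the constant $c_\theta$, a ball $B'_r$ with $c_\theta\le r\le\theta$ and a vector $\ell\in\R^{N-1}$ with $|\ell|\le C$ such that $F(y)\le\ell\cdot y+\frac\theta2 r$ for $|y|\le r$. By the uniform convergence of $F_j^+$, for all large $j$ we then have the quantitative flatness bound $F_j^+(y)\le\ell\cdot y+\frac34\theta r$ for $|y|\le r$.

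Next I would turn this graph bound into a pointwise vanishing statement for $u_j$. By conditions (1) and (2) of Definition \ref{def-flat} (with $\rho=1$), $u_j=0$ for $x_N\ge\sigma_j$ and $u_j>0$ for $x_N<-\sigma_j$, so the free boundary of $u_j$ in $B_1$ lies in the strip $\{|x_N|\le\sigma_j\}$ and $\{u_j=0\}$ is reached monotonically along $e_N$ once one is above the free boundary. A standard connectedness argument (as in \cite{AC}, Lemmas 7.7--7.8, and \cite{ACF}) then gives, for large $j$, that $u_j(x)=0$ whenever $|x'|\le r$ and $x_N\ge\sigma_j(\ell\cdot x'+\frac34\theta r)$. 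Now set
\[
\bar\nu_j=\frac{e_N-\sigma_j(\ell,0)}{|e_N-\sigma_j(\ell,0)|},\qquad \bar\rho_j=r,
\]
so that $|\nu_j-\bar\nu_j|=|e_N-\bar\nu_j|\le C\sigma_j$ for $j$ large (using $|\ell|\le C$, after absorbing an absolute factor into $C$), and $c_\theta\le\bar\rho_j=r\le\theta$. Since $|e_N-\sigma_j(\ell,0)|\ge1$, for $x\in B_{\bar\rho_j}$ with $\langle x,\bar\nu_j\rangle\ge\theta\sigma_j\bar\rho_j$ one has $|x'|<r$ and $x_N-\sigma_j\ell\cdot x'\ge\theta\sigma_j r$, hence $x_N\ge\sigma_j(\ell\cdot x'+\theta r)\ge\sigma_j(\ell\cdot x'+\frac34\theta r)$; therefore $u_j=0$ on $\{x\in B_{\bar\rho_j}:\langle x,\bar\nu_j\rangle\ge\theta\sigma_j\bar\rho_j\}$, which is condition (1) of $F(\theta\sigma_j,1;\tau_j)$ in $B_{\bar\rho_j}$ in direction $\bar\nu_j$. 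Condition (2) with second flatness parameter $1$ is vacuous because on $\{\langle x,\bar\nu_j\rangle\le-\bar\rho_j\}$ the lower bound $-\lstar_j(0)(\langle x,\bar\nu_j\rangle+\bar\rho_j)$ is nonpositive while $u_j\ge0$; and condition (3), $|\nabla u_j|\le\lstar_j(0)(1+\tau_j)$ in $B_{\bar\rho_j}$, is inherited from the same bound on the larger ball $B_1$. As $0\in\partial\{u_j>0\}$ as well, we conclude that $u_j\in F(\theta\sigma_j,1;\tau_j)$ in $B_{\bar\rho_j}$ in direction $\bar\nu_j$ with $c_\theta\le\bar\rho_j\le\theta$ and $|\nu_j-\bar\nu_j|\le C\sigma_j$. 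Undoing the rescaling \eqref{reduction} yields $u_j\in F(\theta\sigma_j,1;\tau_j)$ in $B_{\rho_j\bar\rho_j}$ in direction $\bar\nu_j$ with $c_\theta\rho_j\le\rho_j\bar\rho_j\le\theta\rho_j$, contradicting the choice of $u_j$.

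The step I expect to be the delicate one is the transfer in the third paragraph: upgrading the uniform estimate $F_j^+(y)\le\ell\cdot y+\frac34\theta r$ to the conclusion that $u_j$ vanishes above a tilted hyperplane throughout a ball of definite radius. This needs the geometric fact that the positivity set of $u_j$ does not re-enter above the graph of $\sigma_jF_j^+$ inside $B_r$ --- equivalently, that $\{u_j=0\}$ is monotone along $e_N$ past the free boundary --- together with the thin-strip localization of $\partial\{u_j>0\}$; one must also keep track that the ball $B_{\bar\rho_j}$ stays inside the region where the blow-up of Lemma \ref{lema 4} and Corollary \ref{coro-ell} provide control. All of this is exactly the content carried over from \cite{AC} (Lemmas 7.7--7.8) and \cite{ACF}; the only genuinely new bookkeeping compared with the constant-exponent homogeneous case is checking that the rescaled data $\bar p_j,\bar\lstar_j,\bar f_j$ retain the structure conditions and that the right-hand side and the $\rho_j$-dependence do not spoil the smallness $\tau_j/\sigma_j^2\to0$ needed in Lemma \ref{lema 4}.
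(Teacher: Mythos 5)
Your proposal is correct and follows essentially the paper's own route: the paper's proof is exactly the Alt--Caffarelli Lemma 7.9 contradiction argument, i.e.\ rescale to $B_1$, apply the non-homogeneous blow-up of Lemma \ref{lema 4} and Corollary \ref{coro-ell} to a putative sequence of counterexamples, transfer the tilted-plane bound on $F$ back to $F_j^+$ and hence to the vanishing set of $u_j$, which is what you did. One harmless slip: condition (2) of $F(\theta\sigma,1;\tau)$ is not satisfied because the lower bound $-\lstar(0)\big(\langle x,\bar\nu\rangle+\bar\rho\big)$ is nonpositive (on that set it is in fact nonnegative), but because no point of the open ball $B_{\bar\rho}$ satisfies $\langle x,\bar\nu\rangle\le-\bar\rho$, so the condition is vacuous.
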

\begin{proof} It follows as Lemma 7.9 in \cite{AC} by applying Corollary \ref{coro-ell} to $\bar u_k(x)=\frac1{\rho}_k u_k({\rho}_k x)$.
\end{proof}

Now, in order to improve on the gradient in the flatness class, we
find an equation to which $v=|\nabla u|$ is a subsolution.
\begin{lemm}\label{lema 9} Let
$p\in W^{1,\infty}(\Omega)\cap W^{2,q}(\Omega)$ with
$1<p_{\min}\le p(x)\le p_{\max}<\infty$ in $\Omega$ and $f\in
L^\infty(\Omega)\cap W^{1,q}(\Omega)$ for some $q\ge1$.

 Let $u$ such that $\Delta_{p(x)}u=f$ and $0<c\le|\nabla u|\le C$ in $\Omega$. There exist $D=\{D_{ij}\}$, $ B=\{ b_j\}$ and $G$ such that
\[\begin{aligned}
&\bar\beta|\xi|^2\le \sum_{ij}D_{ij}(x)\xi_i\xi_j\le{\bar\beta}^{-1}|\xi|^2\quad\mbox{for every }\xi\in\R^N,\ x\in\Omega,\\
&\|B\|_{L^{\infty}(\Omega)} \le \bar
C\quad,\quad\|G\|_{L^q(\Omega)}\le \bar C
\end{aligned}
\]
with $\bar\beta=\bar\beta(p_{\min}, p_{\max}, c, C)>0$, $\bar C=\bar
C(p_{\min}, p_{\max}, c, C,\|f\|_{L^\infty(\Omega)\cap W^{1,q}(\Omega)}, \|p\|_{W^{1,\infty}(\Omega)\cap W^{2,q}(\Omega)})$ such that $v=|\nabla u|$ satisfies
\begin{equation}\label{eq-v1}
\mbox{div\,}D\nabla v+ B\cdot\nabla v\ge G
\end{equation}
weakly in $\Omega$.
\end{lemm}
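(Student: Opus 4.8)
The plan is to use that the bounds $0<c\le|\nabla u|\le C$ make the equation uniformly elliptic and nondegenerate, so that $u$ is regular enough to differentiate $\Delta_{p(x)}u=f$; each partial derivative $u_{x_k}$ then solves a uniformly elliptic \emph{divergence}-form equation with one and the same principal part $D$, and $v=|\nabla u|$ inherits \eqref{eq-v1} from these $N$ equations through the Cauchy--Schwarz/Bochner-type inequality $\sum_k D\nabla u_{x_k}\cdot\nabla u_{x_k}\ge D\nabla v\cdot\nabla v$.

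\emph{Regularity.} Rewriting the equation in nondivergence form,
\[
a_{ij}(x)\,u_{x_ix_j}=|\nabla u|^{2-p(x)}f-\log|\nabla u|\,\nabla p\cdot\nabla u=:g,\qquad a_{ij}(x)=\delta_{ij}+(p(x)-2)\frac{u_{x_i}u_{x_j}}{|\nabla u|^2},
\]
the matrix $(a_{ij})$ is continuous (continuity of $\nabla u$ from the interior $C^{1,\alpha}$ estimates of \cite{Fan} already used in the proof of Proposition \ref{loc-lip}, nondegeneracy from $|\nabla u|\ge c$) and uniformly elliptic, and $g\in L^\infty(\Omega)$. Hence the Calder\'on--Zygmund estimates for nondivergence elliptic equations with continuous coefficients give $u\in W^{2,r}_{\rm loc}(\Omega)$ for every $r<\infty$; in particular $v=|\nabla u|\in W^{1,r}_{\rm loc}(\Omega)$ for every $r<\infty$, $v\ge c>0$, and $\nabla v=\frac1v D^2u\,\nabla u$.

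\emph{The linearized equations and the choice of $D$.} Writing $A(x,\xi)=|\xi|^{p(x)-2}\xi$, so $\operatorname{div}A(x,\nabla u)=f$ weakly, and differentiating this identity in $x_k$ (legitimate since $A(x,\nabla u)\in W^{1,r}_{\rm loc}$), we get for $w_k:=u_{x_k}$
\[
\operatorname{div}\big(\mathcal A\,\nabla w_k+G_k\big)=f_{x_k}\qquad\text{weakly in }\Omega,
\]
where $\mathcal A(x)=A_\xi(x,\nabla u)=|\nabla u|^{p(x)-2}\big(\delta_{ij}+(p(x)-2)\frac{u_{x_i}u_{x_j}}{|\nabla u|^2}\big)$ and $G_k(x)=A_{x_k}(x,\nabla u)=|\nabla u|^{p(x)-2}\log|\nabla u|\,p_{x_k}\,\nabla u$ (the derivative of $A$ in the \emph{explicit} $x_k$). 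The matrix $\mathcal A$ is symmetric and, because $c\le|\nabla u|\le C$ and $1<p_{\min}\le p(x)\le p_{\max}$, uniformly elliptic with constants depending only on $p_{\min},p_{\max},c,C$; I take $D:=\mathcal A$. The pointwise inequality $\sum_k D\nabla w_k\cdot\nabla w_k\ge D\nabla v\cdot\nabla v$ holds because, with $\hat n:=\nabla u/v$ a unit vector, $\nabla v=D^2u\,\hat n$, whereas $\sum_k D\nabla w_k\cdot\nabla w_k=\operatorname{tr}\big((D^2u)^{T}D\,D^2u\big)$ is basis independent and so, after completing $\hat n$ to an orthonormal basis, dominates the single nonnegative summand $D(D^2u\,\hat n)\cdot(D^2u\,\hat n)=D\nabla v\cdot\nabla v$.

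\emph{Deriving \eqref{eq-v1}.} For $\phi\in C_0^\infty(\Omega)$, $\phi\ge0$, I test the equation for $w_k$ with $\psi_k:=w_k\phi/v$ (admissible by the regularity above) and sum over $k$, using $\sum_k w_k\nabla w_k=v\nabla v$, $\nabla u\cdot\nabla w_k=v\,v_{x_k}$ and $\sum_k p_{x_k}w_k=\nabla p\cdot\nabla u$. Discarding the term $\int\frac\phi v\sum_k D\nabla w_k\cdot\nabla w_k$ by the pointwise inequality, the \emph{only} surviving gradient-of-test-function term is $-\int\frac\gamma v(\nabla p\cdot\nabla u)\,\nabla u\cdot\nabla\phi$, with $\gamma:=|\nabla u|^{p(x)-2}\log|\nabla u|$ bounded; rewriting it as $\int(\operatorname{div}\vec H)\phi$ with $\vec H:=\frac\gamma v(\nabla p\cdot\nabla u)\nabla u$ and collecting the zeroth- and first-order terms yields \eqref{eq-v1} with $D=\mathcal A$,
\[
B:=-\gamma\,\nabla p+\frac{\gamma}{v^2}(\nabla p\cdot\nabla u)\,\nabla u,\qquad G:=\frac1v\nabla f\cdot\nabla u-\operatorname{div}\vec H.
\]
It is then routine that $|B|\le\bar C$ (all factors bounded, and $\frac1{v^2}|(\nabla p\cdot\nabla u)\nabla u|\le|\nabla p|$) and $\|G\|_{L^q}\le\bar C$, with $\bar\beta,\bar C$ of the stated form. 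The part needing the most care is precisely this last step: organizing the algebra so that only the term $\vec H\cdot\nabla\phi$ is left over, and checking $\vec H\in W^{1,q}_{\rm loc}(\Omega)$, i.e. $\operatorname{div}\vec H\in L^q_{\rm loc}(\Omega)$ --- this is where the hypotheses $p\in W^{2,q}$, $f\in W^{1,q}$ (and the resulting $u\in W^{2,q}_{\rm loc}$) enter, and, as usual, these estimates are interior, so in the applications $\Omega$ is shrunk slightly.
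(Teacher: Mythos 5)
Your proposal follows essentially the same strategy as the paper: establish $u\in W^{2,t}_{\rm loc}$, differentiate the identity $\mbox{div\,}A(x,\nabla u)=f$ in $x_k$, test the resulting equations for $w_k=u_{x_k}$ with $u_{x_k}$ times a nonnegative cut-off, sum over $k$, and discard the nonnegative Hessian-quadratic term by ellipticity (your normalization $\psi_k=w_k\phi/v$ merely yields $D=A_\xi(x,\nabla u)$ in place of the paper's $D=|\nabla u|\,A_\xi(x,\nabla u)$, which is immaterial, and your pointwise inequality $\sum_k D\nabla w_k\cdot\nabla w_k\ge D\nabla v\cdot\nabla v$ is correct). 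The genuine gap is in the last step, where you set $G=\frac1v\nabla f\cdot\nabla u-\mbox{div\,}\vec H$ with $\vec H=\frac\gamma v(\nabla p\cdot\nabla u)\nabla u$ and call the bound $\|G\|_{L^q}\le\bar C$ routine. It is not: $\mbox{div\,}\vec H$ contains the term $\frac\gamma v(\nabla p\cdot\nabla u)\Delta u$ and several further terms involving $D^2u$ (through $\nabla v$ and $D^2u\,\nabla p$), and these are controlled only in $L^q_{\rm loc}$ by interior Calder\'on--Zygmund estimates, i.e.\ by norms of $D^2u$, which are \emph{not} among the quantities $p_{\min},p_{\max},c,C,\|f\|_{L^\infty\cap W^{1,q}},\|p\|_{W^{1,\infty}\cap W^{2,q}}$ allowed in the statement. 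Thus the coefficients you exhibit do not satisfy the claimed bounds, and ``shrinking $\Omega$ slightly'' does not repair this: the lemma is applied (Lemmas \ref{Thm4.2} and \ref{lema 10}) on sets of the form $\{U_\ep>0\}\cap B_{2r}(x_0)$ reaching up to the free boundary, and the whole point is that $\bar\beta,\bar C$ are uniform there, independent of any second-derivative norm of $u$ and of $\ep$, $r$.

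The missing step --- which is the actual content of the paper's computation --- is to expand $\mbox{div\,}\vec H$ and dispose of all second-order terms: the contributions proportional to $\nabla v$ (from $\nabla\gamma$, from $\frac1v\langle D^2u\,\nabla p,\nabla u\rangle=\langle\nabla p,\nabla v\rangle$, etc.) must be absorbed into $B\cdot\nabla v$, which changes your $B$, while the term containing $\Delta u$ must be eliminated by substituting from the equation
\[
|\nabla u|^{p(x)-2}\Big(\Delta u+(p(x)-2)\frac{\langle\nabla u,\nabla v\rangle}{|\nabla u|}+\log|\nabla u|\,\langle\nabla u,\nabla p\rangle\Big)=f,
\]
which converts it into $f$-terms and further $\nabla v$-terms. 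After this cancellation the remaining $G$ involves only $\nabla f$, $f$, $\nabla p$, $D^2p$ and $\nabla u$, so that $\|G\|_{L^q(\Omega)}\le\bar C$ with exactly the stated dependence; this is how the paper arrives at $B=|\nabla u|^{p(x)-3}\langle\nabla u,\nabla p\rangle\nabla u$ and $G=\langle\nabla f,\nabla u\rangle-f\log|\nabla u|\langle\nabla u,\nabla p\rangle-|\nabla u|^{p(x)-2}\log|\nabla u|\sum_{i,k}u_{x_i}u_{x_k}p_{x_ix_k}$. Until you carry out this algebra, \eqref{eq-v1} as you state it holds only with a right-hand side that is not admissible for the lemma or for its subsequent use.
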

\begin{proof}
We start with some notation.
For $x\in \Omega$, $\xi\in\R^N$, we let
$A(x,\xi)=|\xi|^{p(x)-2}\xi$.
First we observe that, by the arguments in Theorem 3.2 in \cite{CL2}, $u\in W^{2,2}_{\rm loc}(\Omega)$ and then, by using the nondivergence
form of the equation, we deduce that $u\in W^{2,t}_{\rm loc}(\Omega)$ for every $1\le t<\infty$ (see Lemma 9.16 in \cite{GT}).

Then,  taking $\eta\in
C_0^\infty(\Omega)$, letting $\eta_{x_k}$ as test function and
integrating by parts, we get
\begin{equation}\label{weak}
\int f\eta_{x_k}=\int\frac{\partial A}{\partial x_k}(x,\nabla
u)\nabla\eta+\sum_{ij}\int a_{ij}(x,\nabla u)u_{x_jx_k}\eta_{x_i}
\end{equation}
where $a_{ij}(x,\xi)=\frac{\partial A_i}{\partial \xi_j}(x,\xi)$.

Observe that \eqref{weak} actually holds for any $\eta\in
W^{1,p(x)}_0(\Omega)$.

Then, we  take $\eta=u_{x_k}\psi$ with $0\le\psi\in
C_0^\infty(\Omega)$ arbitrary.  Hence, by using the ellipticity of
$a_{ij}$ and after summation on $k$, we get
\[\begin{aligned}
\int f\Delta u \psi+\int f\langle\nabla u,\nabla \psi\rangle&\ge\sum_{i,k}\int\frac{\partial A_i}{\partial x_k}(x,\nabla u) u_{x_ix_k}\psi\\
&+\sum_{i,k}\int \frac{\partial A_i}{\partial x_k}(x,\nabla u) u_{x_k}\psi_{x_i}+\sum_{i,j}\int a_{ij}\sum_k u_{x_k}u_{x_jx_k}\psi_{x_i}.
\end{aligned}
\]
Now, we denote $D=(D_{ij})$ with $D_{ij}=|\nabla u|a_{ij}$, we use that
$v_{x_j}=\sum_k\frac{u_{x_kx_j}u_{x_k}}{|\nabla u|}$ and we integrate by parts the second terms on the left and right hand sides.
In fact, since
\[\frac{\partial A_i}{\partial x_k}(x,\nabla u)=|\nabla u|^{p(x)-2}\log|\nabla u| u_{x_i}
p_{x_k},
\]
we get
\begin{equation}\label{choclo}\begin{aligned}
\frac{ d}{dx_i}\Big[\frac{\partial A_i}{\partial x_k}(x,\nabla u)\Big] &=
|\nabla u|^{p(x)-2}\big(\log|\nabla u|\big)^2 \,u_{x_i}p_{x_k}p_{x_i}\\&+
|\nabla u|^{p(x)-2}\log|\nabla u|\, u_{x_i} p_{x_kx_i}+|\nabla u|^{p(x)-2}\log|\nabla u|\, u_{x_ix_i}p_{x_k}\\
&+(p(x)-2)|\nabla u|^{p(x)-3}\log|\nabla u| u_{x_i}p_{x_k}v_{x_i}
+
|\nabla u|^{p(x)-3} u_{x_i}p_{x_k}v_{x_i},
\end{aligned}\end{equation}
so we obtain
\begin{equation}\label{eq-v2}\begin{aligned}
-\int\langle\nabla f,\nabla u\rangle\psi&\ge \int\langle D\nabla v,\nabla \psi\rangle+\sum_{i,k}\int \frac{\partial A_i}{\partial x_k}(x,\nabla u) u_{x_ix_k}\psi\\
&-\sum_{i,k}\int \frac{ d}{dx_i}\Big[\frac{\partial A_i}{\partial x_k}(x,\nabla u)\Big] u_{x_k}\psi-\sum_{i,k}\int \frac{\partial A_i}{\partial x_k}(x,\nabla u) u_{x_ix_k}\psi\\
&=\int\langle D\nabla v,\nabla \psi\rangle
-\sum_{i,k}\int \frac{ d}{dx_i}\Big[\frac{\partial A_i}{\partial x_k}(x,\nabla u)\Big] u_{x_k}\psi.
\end{aligned}\end{equation}

Then, by replacing \eqref{choclo} in \eqref{eq-v2}, it follows
\[\begin{aligned}
-\int\langle\nabla f,\nabla u\rangle\psi&\ge \int\langle D\nabla v,\nabla \psi\rangle
-\int |\nabla u|^{p(x)-2}\big(\log|\nabla u|\big)^2 \langle\nabla u,\nabla p\rangle^2\psi\\
&- \int |\nabla u|^{p(x)-2}\log|\nabla u|\sum_{i,k} u_{x_i}u_{x_k} p_{x_kx_i}\psi
-\int |\nabla u|^{p(x)-2}\log|\nabla u| \langle \nabla u,\nabla p\rangle\Delta u\psi\\
&-\int\Big\langle |\nabla u|^{p(x)-3}\big[(p(x)-2)\log|\nabla u|+1\big]\langle\nabla u,\nabla p\rangle\ \nabla u,\nabla v\Big\rangle \psi.
\end{aligned}\]

Finally, since $|\nabla u|^{p(x)-2}\Big(\Delta u+(p(x)-2)\sum_{i,j}\frac{u_{x_i}u_{x_j}}{|\nabla u|^2}u_{x_ix_j}+\log|\nabla u| \langle \nabla u,\nabla p\rangle\Big)=f$,
\[\begin{aligned}
-\int |\nabla u|^{p(x)-2}\log|\nabla u| \langle \nabla u,\nabla p\rangle\Delta u\psi&=-
\int f \log|\nabla u| \langle \nabla u,\nabla p\rangle\psi\\&+
\int \big\langle(p(x)-2)|\nabla u|^{p(x)-3}\log|\nabla u| \langle \nabla u,\nabla p\rangle\nabla u,\nabla v\Big\rangle \psi\\&+
\int |\nabla u|^{p(x)-2}\big(\log|\nabla u|\big)^2 \langle\nabla u,\nabla p\rangle^2\psi.
\end{aligned}
\]

Hence, $v$ satisfies \eqref{eq-v1} with
\[\begin{aligned}
D_{ij}&=|\nabla u|^{p(x)-1}\big(\delta_{ij}+\frac{(p(x)-2)}{|\nabla u|^2} u_{x_i}
u_{x_j}\big),\\
 B&=|\nabla u|^{p(x)-3}\langle\nabla u,\nabla p\rangle\  \nabla u,\\
 G&=\langle \nabla f,\nabla u\rangle-f \log|\nabla u| \langle \nabla u,\nabla p\rangle -|\nabla u|^{p(x)-2}\log|\nabla u|\sum_{i,k} u_{x_i}u_{x_k} p_{x_kx_i}.
 \end{aligned}\]
\end{proof}

\begin{rema}\label{por-ref-lyagh} A similar lemma to Lemma \ref{lema 9}, valid for the case $f\equiv 0$, was established in reference
\cite{CL1} (Lemma 2.2).
\end{rema}

Now, we get an estimate on $|\nabla u|$ close to the free boundary.

\begin{lemm}\label{Thm4.2} Let $p$ and $f$ as in Lemma
\ref{lema 9} with $q>\max\{1,N/2\}$ and $\lstar\in
C^{\alpha^*}(\Omega)$ with $0<\lambda_{\min}\le \lstar(x)\le
\lambda_{\max}<\infty$ in $\Omega$ and
$[\lstar]_{C^{\alpha^*}(\Omega)}\le C^*$. Let $u$ be a weak
solution to \ref{fbp-px} in $\Omega$ and let
$x_0\in\Omega\cap\partial\{u>0\}$ with $B_{4R}(x_0)\subset\Omega$,
$R\le 1$. Assume that, for every $r\le R$,
\[
u\in F(\sigma,1;\infty)\quad\mbox{in}\quad B_r(x_0)\quad\mbox{in some direction}\quad\nu_r,
\]
with power $p$, slope $\lstar$ and rhs $f$, with $\sigma\le 1/2$.

Then, for every $x_1$ in $B_r(x_0)$,
\begin{equation}
\label{bound-grad} |\nabla u|\le \lstar(x_1)+C
\left(\frac{r}{R}\right)^\gamma\quad\mbox{in}\quad
B_r(x_1)\quad\mbox{if}\quad r\le R,
\end{equation}
for some constants $C$ and $0<\gamma<1$ depending only on $N$,
$p_{\min}$, $p_{\max}$, $\lambda_{\min}$,
$\|f\|_{L^\infty(B_{2R}(x_0))\cap W^{1,q}(B_{2R}(x_0))}$,
$\|p\|_{W^{1,\infty}(B_{2R}(x_0))\cap W^{2,q}(B_{2R}(x_0))}$,
$\alpha^*$, $C^*$, $q$ and $\|\nabla u\|_{L^\infty(B_{2R}(x_0))}$.

\end{lemm}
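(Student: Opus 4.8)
The plan is a De Giorgi--type iteration on dyadic balls, based on the fact that $v=|\nabla u|$ is a subsolution of a uniformly elliptic equation with small right-hand side (Lemma~\ref{lema 9}) combined with the flatness of $u$ at every scale. First I would reduce to $x_1=x_0$: once $|\nabla u|\le\lstar(x_0)+Cr^{\gamma}$ in $B_r(x_0)$ is proved for every $r\le R$, then for $x_1\in B_r(x_0)$ one has $B_r(x_1)\subset B_{2r}(x_0)$ and $|\lstar(x_0)-\lstar(x_1)|\le C^*r^{\alpha^*}$, so the full statement follows after replacing $\gamma$ by $\min\{\gamma,\alpha^*\}$ (recall $R\le1$). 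Next, the rescaling $\bar u(x)=u(x_0+Rx)/R$, $\bar p(x)=p(x_0+Rx)$, $\bar\lstar(x)=\lstar(x_0+Rx)$, $\bar f(x)=Rf(x_0+Rx)$ preserves all hypotheses in $B_1$ (with structure constants only improved, since $R\le1$), so it suffices to prove $|\nabla u|\le\lstar(0)+Cr^{\gamma}$ in $B_r$ for every $r\le1$, with $x_0=0$.

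\textbf{The iteration.} Put $a(r)=\big(\sup_{B_r\cap\{u>0\}}|\nabla u|-\lstar(0)\big)_+$. I would establish a decay estimate
\[
a(\theta\rho)\le\theta^{\gamma}\,a(\rho)+C_*\,\rho^{\beta}\qquad\text{for }0<\rho\le1,
\]
with $\beta=\min\{1,\alpha^*,2-N/q\}>0$ (here $q>N/2$ is used), $\theta\in(0,1)$, $0<\gamma<\beta$ and $C_*$ depending only on the listed data. Iterating this from $\rho=1$, where $a(1)\le\|\nabla u\|_{L^\infty(B_1)}$ is bounded in terms of the data, gives $a(r)\le C r^{\gamma}$, i.e.\ the claim. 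To prove the decay estimate at scale $\rho$ I would rescale $B_\rho$ to $B_1$, so that the right-hand side $G$ of Lemma~\ref{lema 9} has $\|G\|_{L^q(B_1)}=O(\rho^{\beta})$ and the oscillation of $\lstar$ is $O(\rho^{\alpha^*})$, while $u\in F(\sigma,1;\tau)$ in $B_1$ with $\tau\le a(\rho)/\lone+C\rho$ ($\nabla u$ vanishing on $\{u=0\}$). When $\tau$ is small, the flatness--improvement chain (Lemmas~\ref{lema 1}, \ref{lema 2}, \ref{prop-flatness}, \ref{lema 3}) places the free boundary of $u$ in $B_{1/2}$ inside a thin two--sided strip about $\{\langle x,\nu\rangle=0\}$ and gives $\tfrac{\lone}{2}\le|\nabla u|\le 2\ltwo$ in the part of $\{u>0\}\cap B_{1/2}$ outside that strip; there Lemma~\ref{lema 9} applies and $v=|\nabla u|$ is a uniformly elliptic subsolution with $L^q$ right-hand side $O(\rho^{\beta})$.

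\textbf{Reaching the free boundary.} On the upper part of the free boundary, flat--to--the--right provides a ball contained in $\{u=0\}$ touching it, so condition (4) of Definition~\ref{weak2} together with a barrier comparison as in Proposition~\ref{prop-flatness} (run from above) forces $v$ close to $\lstar(0)$ on a fixed proportion of the boundary of $\{u>0\}\cap B_{1/4}$ adjacent to the free boundary, with an error controlled by $a(\rho)$ and $\rho^{\alpha^*}$. A boundary weak--Harnack / De Giorgi estimate for the subsolution $v$ (the right-hand side entering only as an additive $O(\rho^{\beta})$ term) then improves the supremum of $v$ on $B_\theta$: one gets $\sup_{B_\theta\cap\{u>0\}}v\le\lstar(0)+\theta^{\gamma}a(\rho)+C\rho^{\beta}$ for suitable $\theta,\gamma$, which after undoing the rescaling is the decay estimate of the previous step. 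For the finitely many large scales $\rho$ at which $a(\rho)$ is not yet small enough to enter the flatness--improvement lemmas, the crude bound $a(\rho)\le\|\nabla u\|_{L^\infty(B_1)}$ suffices, so the decay estimate is organized as an induction on the dyadic scale.

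\textbf{Main obstacle.} The delicate point is exactly this bootstrapping: the flatness--improvement lemmas and the barrier all require the flatness parameter $\tau$ at scale $\rho$ to be already small, while $\tau$ is essentially $a(\rho)$, the very quantity the iteration is producing; hence the induction on the scale, with the weak-solution asymptotics (condition (4) of Definition~\ref{weak2}) and the uniform nondegeneracy supplying the initial scales. The heart of the matter is to keep every constant in this induction independent of $u$ --- in particular independent of the a priori non-quantitative modulus in condition (4) --- which is where one leans on the uniformity of the constants in Lemmas~\ref{lema 2} and \ref{lema 3} and, if needed, on a compactness argument for the class of flat weak solutions with the given structure bounds.
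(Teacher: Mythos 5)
There is a genuine gap: the engine you propose for the one--step decay cannot be started under the hypotheses of this lemma. The flatness--improvement chain (Lemmas \ref{lema 1}, \ref{lema 2}, Proposition \ref{prop-flatness}, Lemma \ref{lema 3}) applies only to functions in $F(\sigma,1;\sigma)$ with $\sigma\le\sigma_0$ \emph{small} and with compatibility conditions of the type $C^*\rho^{\alpha^*}\le\lstar(0)\sigma$, $\tau\lesssim\sigma^2$; here the hypothesis is merely $u\in F(\sigma,1;\infty)$ with the \emph{fixed} constant $\sigma\le1/2$ and with no gradient control at all ($\tau=\infty$). Indeed, in the paper's architecture this lemma is precisely the device that manufactures the small excess $\tau$ needed to enter that chain later (see how it is invoked inside Lemma \ref{lema 10} and in Theorems \ref{reg-weak-conflat+dens}--\ref{reg-weak-conflat}), so using the chain to prove it is backwards. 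You acknowledge the circularity, but your remedy does not close it: $\sigma$ never becomes small as the scale decreases (it is a fixed constant in the hypothesis, not a quantity your iteration improves), condition (4) of Definition \ref{weak2} gives only a pointwise, non--quantitative $\limsup$, so the ``initial scales'' at which you would switch on the machinery cannot be bounded in terms of the stated data (the constants would then depend on $u$, which the statement forbids), and the compactness argument you allude to is not set up. Also, the barrier step ``run from above'' from condition (4) is not available as described: the touching--ball part of (4) is a \emph{lower} bound on $u/\mathrm{dist}$, not an upper bound on $|\nabla u|$.

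The paper's proof needs none of this and is much more economical; only its skeleton (Lemma \ref{lema 9} plus a decay--and--iterate scheme) coincides with yours. Set $\lstar_{2R_0}=\sup_{B_{2R_0}(x_0)}\lstar$ and $U_\ep=\big(|\nabla u|-\lstar_{2R_0}-\ep\big)^+$. The first part of condition (4) of Definition \ref{weak2} makes $U_\ep$ vanish in a neighborhood of $B_{2r}(x_0)\cap\partial\{u>0\}$, and on $\{U_\ep>0\}$ one automatically has $|\nabla u|\ge\lone$, so Lemma \ref{lema 9} applies there; extending $B,G$ by zero and $D$ elliptically, $U_\ep$ is a subsolution of a uniformly elliptic equation in all of $B_{2r}(x_0)$. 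Applying the weak Harnack inequality to $V=h_\ep(2r)-U_\ep\ge0$, where $h_\ep(r)=\sup_{B_r(x_0)}U_\ep$, and using that $F(\sigma,1;\infty)$ with $\sigma\le1/2$ provides a ball of radius $\tfrac{1-\sigma}{2}r\ge r/4$ contained in $\{u=0\}$ (where $V=h_\ep(2r)$), one gets directly
\begin{equation*}
h(r)\le(1-\bar c)\,h(2r)+\bar C\,r^{2-N/q},\qquad h(r)=\sup_{B_r(x_0)}\big(|\nabla u|-\lstar_{2R_0}\big)^+,
\end{equation*}
with constants depending only on the listed data; no smallness of $\sigma$ or $\tau$, no strip localization, no lower gradient bound near the free boundary, and no barrier are needed. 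A standard iteration (using $2-N/q>0$), the choice $R_0=r^{1/2}R^{1/2}$, and the H\"older continuity of $\lstar$ then yield \eqref{bound-grad}. If you replace your flatness--improvement step by this weak--Harnack argument, the rest of your outline (reduction to $x_1=x_0$, dyadic iteration) is essentially the paper's proof.
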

\begin{proof} We let $0<R_0\le R$,  $\ep>0$ and define
\[\begin{aligned}
\lstar_{2R_0}&=\sup_{B_{2R_0}(x_0)}\lstar(x),\\
 U_\ep(x)&=\big(|\nabla u|-\lstar_{2R_0}-\ep\big)^+.
 \end{aligned}\]
Let $0<r\le R_0$. Since for every $ \bar x\in \overline{B_{2R_0}}(x_0)\cap\partial\{u>0\}$
\[
\limsup_{\stackrel{x\to \bar x}{u(x)>0}}|\nabla u|\le \lstar(\bar x),
\]
then the function $U_\ep$
vanishes in a neighborhood of $B_{2r}(x_0)\cap\partial\{u>0\}$.

We have $|\nabla u|\ge \lambda_{\min}$ in $\{U_\ep>0\}$ and
moreover, arguing as in Lemma \ref{lema 9} we see that $u\in
W^{2,t}(B_{2r}(x_0)\cap\{U_\ep>0\})$ for every $1\le t<\infty$.
Thus, by Lemma \ref{lema 9}, $U_\ep$ is a solution to
\begin{equation*}
\mbox{div} D\nabla U_\ep+ B\cdot\nabla U_\ep\ge G
\end{equation*}
in $\{U_\ep>0\}\cap B_{2r}(x_0)$
for some functions $D=\{D_{ij}\}$, $ B=\{ b_j\}$ and $G$ such that
\begin{equation}\label{cotas-eq}
\begin{aligned}
&\bar\beta|\xi|^2\le \sum_{ij}D_{ij}(x)\xi_i\xi_j\le{\bar\beta}^{-1}|\xi|^2\quad\mbox{for every }\xi\in\R^N,\ x\in B_{2R}(x_0),\\
&\|B\|_{L^{\infty}(\{U_\ep>0\}\cap B_{2R}(x_0))}\le \bar
C\quad,\quad\|G\|_{L^q(\{U_\ep>0\}\cap B_{2R}(x_0))}\le \bar C
\end{aligned}
\end{equation}
with $\bar\beta=\bar\beta(p_{\min}, p_{\max}, \lambda_{\min},
\|\nabla u\|_{L^\infty(B_{2R}(x_0))})$, $\bar C=\bar C(p_{\min},
p_{\max}, \lambda_{\min}, \|\nabla u\|_{L^\infty(B_{2R}(x_0))},
\linebreak \|f\|_{L^\infty(B_{2R}(x_0))\cap W^{1,q}(B_{2R}(x_0))},
 \|p\|_{W^{1,\infty}(B_{2R}(x_0))\cap W^{2,q}(B_{2R}(x_0))})$.

Therefore, if $\widetilde G$ and $\widetilde B$ are the extensions by 0 of $G$ and $B$ respectively from $\{U_\ep>0\}\cap B_{2r}(x_0)$ to $B_{2r}(x_0)$ and $\widetilde D$ is an extension of $D$ that preserves the uniform ellipticity with the same constants, there holds that $U_\ep$ satisfies
\begin{equation}\label{eq-Uep}
\mbox{div} \widetilde D\nabla U_\ep+ \widetilde B\cdot\nabla
U_\ep\ge \widetilde G
\end{equation}
in $B_{2r}(x_0)$  (see, for instance, Lemma 2.1 in \cite{LW3}).

Let now $h_\ep(r)=\sup_{B_{r}(x_0)}U_\ep$ and $V=h_\ep(2r)-U_\ep$. Then,
\begin{equation*}
\mbox{div} \widetilde D\nabla V+ \widetilde B\cdot\nabla V\le
-\widetilde G\quad\mbox{in}\quad B_{2r}(x_0).
\end{equation*}
Moreover, $V\ge0$ in $B_{2r}(x_0)$. By the weak Harnack inequality (see \cite{GT}),
\[
\inf_{B_{r}(x_0)} V+r^{2-N/q}\|\widetilde G\|_{L^q(B_{2r}(x_0))}\ge c\pint_{B_{3r/2}(x_0)}V
\]
with $c=c(N,\bar\beta,\|\widetilde B\|_{L^\infty(B_{2R}(x_0))}, q)$.

Now, since by the flatness condition, $u$ (and therefore $U_\ep$) vanishes in the ball $B_{\frac{1-\sigma}2 r}(x_0+\frac{1+\sigma}2 r\nu_r)$ for some direction $\nu_r$, there holds that   $V=h_\ep(2r)$ in $B_{\frac{1-\sigma}2 r}(x_0+\frac{1+\sigma}2 r\nu_r)$ and therefore,
\[
h_\ep(2r)-h_\ep(r)+r^{2-N/q}\bar C\ge \hat c \big(\frac{1-\sigma}2\big)^N h_\ep(2r)\ge
{\bar c}\, h_\ep(2r)
\]
since $\sigma\le 1/2$, with $\bar c=\bar c(N,\bar\beta,\|\widetilde B\|_{L^\infty(B_{2R}(x_0))},q)<1$ and $\bar C$ the constant in \eqref{cotas-eq}.
We pass to the limit as $\ep\to0$ and we conclude that
\begin{equation}\label{estim-para-h(r)}
h(r)\le \big(1-\bar c\big) h(2r)+r^{2-N/q}\bar C,
\end{equation}
if $r\le R_0$ with $h(r)=\sup_{B_r(x_0)}\big(|\nabla u|-\lstar_{2R_0}\big)^+$. Since $2-N/q>0$, there exist $\tilde\gamma\in(0,1)$, $\tilde C>0$ depending only on
$N,q,\bar c, \|\nabla u\|_{L^\infty(B_{2R}(x_0))}$ and $\bar C$ such that
\[h(s)\le \tilde C \big(\frac{s}{2R_0}\big)^{\tilde\gamma}\]
if $s\le 2R_0$. This implies
\begin{equation}\label{por-iterac}
\sup_{B_{2r}(x_0)}|\nabla u|\le \sup_{B_{2R_0(x_0)}}\lstar (x) + \tilde C\big(\frac{r}{R_0}\big)^{\tilde\gamma},
\end{equation}
if $r\le R_0\le R$, and the  H\"older continuity of $\lstar(x)$ gives, for $x_1\in B_{2R_0}(x_0)$,
\begin{equation}\label{por-hold}
\sup_{B_{2R_0(x_0)}}\lstar (x)\le  \lstar(x_1) + C^*(4R_0)^{{\alpha}^*}.
\end{equation}

We now take $r\le R$,  $R_0=r^{1/2}R^{1/2}$ and $x_1\in B_r(x_0)$ and obtain,  from \eqref{por-iterac} and \eqref{por-hold},
\[
\sup_{B_{r}(x_1)}|\nabla u|\le\sup_{B_{2r}(x_0)}|\nabla u|\le
\lstar(x_1) + C\left(\frac{r}{R}\right)^\gamma,
\]
for $\gamma=\min\{\frac{{\alpha}^*}{2}, \frac{\tilde\gamma}{2}\}$
and $C$ depending only on $\tilde C$, $C^*$, $\tilde\gamma$ and
${\alpha}^*$, which proves \eqref{bound-grad} and completes the
proof.
\end{proof}

Let us show that a point $x_0$ in the reduced free boundary of a weak solution is always under the assumptions of Lemma \ref{Thm4.2}.
\begin{lemm}\label{lema-reduced-Thm4.2}
Let $p\in Lip(\Omega)$ with $1<p_{\min}\le p(x)\le
p_{\max}<\infty$, $\lstar\in C(\Omega)$ with
$0<\lone\le\lstar(x)\le\ltwo<\infty$ and $f\in
L^\infty(\Omega)$.
Let $u$ be a weak solution to \ref{fbp-px} in $\Omega$ and $x_0\in\Omega\cap\partial_{\rm{red}}\{u>0\}$.

There exists $\sigma_0>0$   such that, if $\sigma<\sigma_0$, there exists $r_{\sigma}>0$ such that, for every $r\le r_{\sigma}$,
\[
u\in F(\sigma,1;\infty)\quad \mbox{ in }\quad B_r(x_0)\quad\mbox{ in direction }\quad \nu(x_0),
\]
with power $p$, slope $\lstar$ and rhs $f$. Here $\nu(x_0)$ denotes the exterior unit normal to $\Omega\cap\partial\{u>0\}$ at $x_0$ in the measure
theoretic sense.
\end{lemm}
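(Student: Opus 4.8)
The plan is to reduce the statement to a single flatness condition and then prove that by a blow-up argument based on Lemma \ref{propblowup}. After a rotation and translation we may assume $x_0=0$ and $\nu=\nu(x_0)=e_N$, and I would simply take $\sigma_0=1$. Fix $\sigma\in(0,1)$. Because the target class $F(\sigma,1;\infty)$ has $\sigma_2=1$ and $\tau=\infty$, conditions (2) and (3) of Definition \ref{def-flat} hold for free: condition (2) is vacuous since $\langle x,e_N\rangle\le-\rho$ is incompatible with $x\in B_\rho$, and condition (3) is trivially true when $\tau=\infty$ (note $u$ is locally Lipschitz by Proposition \ref{loc-lip}); also $u$ is a weak solution in $B_\rho\subset\Omega$ and $0\in\partial\{u>0\}$. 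Hence it suffices to produce $r_\sigma>0$ with $B_{r_\sigma}\subset\Omega$ such that, for every $r\le r_\sigma$,
\[
\{u>0\}\cap B_r\cap\{x_N\ge\sigma r\}=\emptyset,
\]
which is exactly condition (1) of Definition \ref{def-flat}.

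I would establish this by contradiction. If it fails, there are $r_k\to0$ and $y_k\in\{u>0\}$ with $|y_k|<r_k$ and $(y_k)_N\ge\sigma r_k$. Since $u(0)=0$ and $\{u>0\}$ is open, $\rho_k:={\rm dist}(y_k,\partial\{u>0\})$ satisfies $0<\rho_k\le|y_k|<r_k$ and $B_{\rho_k}(y_k)\subset\{u>0\}$. As $(y_k)_N>0$, at least half of this ball lies in $\{x_N>0\}$, so $\tfrac12|B_{\rho_k}|\le|\{u>0\}\cap B_{2r_k}\cap\{x_N>0\}|$. On the other hand, since $0\in\partial_{\rm red}\{u>0\}$ with measure-theoretic normal $e_N$, the defining condition \eqref{(3.3)} gives $|\{u>0\}\cap B_{2r_k}\cap\{x_N>0\}|\le\int_{B_{2r_k}}|\chi_{\{u>0\}}-\chi_{\{x_N<0\}}|=o(r_k^N)$. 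Hence $\rho_k=o(r_k)$, so there is a free boundary point $w_k\in\partial\{u>0\}$ with $|w_k-y_k|=\rho_k=o(r_k)$; consequently $|w_k|<2r_k$ and $(w_k)_N\ge\sigma r_k-\rho_k\ge\tfrac\sigma2 r_k$ for $k$ large. Thus we have deposited genuine free boundary points well inside the ``forbidden'' cone $\{x_N\gtrsim\sigma r_k\}$.

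Next I would blow up at scale $R_k:=4r_k$, setting $u_k(x)=R_k^{-1}u(R_kx)$. By Lemma \ref{propblowup} (applied with the constant sequence $x_k\equiv0\in\partial\{u>0\}$), along a subsequence $u_k\to u_0$ in $C^\alpha_{\rm loc}(\R^N)$, $\partial\{u_k>0\}\to\partial\{u_0>0\}$ locally in Hausdorff distance, and $0\in\partial\{u_0>0\}$. The rescaled points $\bar w_k:=w_k/R_k$ satisfy $|\bar w_k|<\tfrac12$, $(\bar w_k)_N\ge\sigma/8$ and $\bar w_k\in\partial\{u_k>0\}$, so (after a further subsequence) $\bar w_k\to\bar w_0$ with $(\bar w_0)_N\ge\sigma/8>0$, and by the local Hausdorff convergence of the free boundaries $\bar w_0\in\partial\{u_0>0\}$. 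On the other hand, rescaling \eqref{(3.3)} shows $\chi_{\{u_k>0\}}\to\chi_{\{x_N<0\}}$ in $L^1_{\rm loc}(\R^N)$, hence $u_0=0$ a.e.\ on $\{x_N>0\}$ and, by continuity, $u_0\equiv0$ on $\{x_N\ge0\}$. In particular $u_0$ vanishes on a whole neighbourhood of $\bar w_0$, so $\bar w_0\notin\overline{\{u_0>0\}}\supseteq\partial\{u_0>0\}$ --- a contradiction. This proves the emptiness claim, and with it the lemma.

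I expect the crux to be the ``clean-up'' step in the second paragraph: a priori the bad point $y_k$ lies only in the positivity set, not on the free boundary, and could in principle be far from it, in which case the subsequent blow-up would carry no information at $\bar w_0$. It is precisely the $o(r^N)$ decay coming from $x_0$ being a reduced-boundary point that forces ${\rm dist}(y_k,\partial\{u>0\})=o(r_k)$, turning $y_k$ into a genuine free boundary point inside the forbidden cone; the Hausdorff convergence of the free boundaries together with the vanishing of the blow-up limit on $\{x_N\ge0\}$ then closes the argument. The remaining ingredients (scaling of the flatness conditions and of \eqref{(3.3)}, and the properties of blow-up sequences) are routine given Lemma \ref{propblowup}.
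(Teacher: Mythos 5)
Your proof is correct, but it runs along a genuinely different route than the paper's. The paper argues directly at scale $r$: if a free boundary point $\bar x$ sat in $B_r^+\setminus\{0<x_N<\sigma r\}$, the nondegeneracy constant $c_{\min}$ from hypothesis (2) of Definition \ref{weak2} together with the local Lipschitz bound of Proposition \ref{loc-lip} would produce an explicit ball of positivity of radius comparable to $\sigma r$ inside $B_{2r}^+$, contradicting the bound $|\{u>0\}\cap B_r^+|<\varepsilon|B_r|$ coming from \eqref{(3.3)}; a separate measure comparison then rules out the remaining possibility that $u>0$ throughout the region above height $\sigma r$. You instead show that any positivity point above height $\sigma r$ lies within $o(r)$ of the free boundary (your ``clean-up'' step, again via \eqref{(3.3)}), transfer the resulting free boundary point into a blow-up, and conclude from Lemma \ref{propblowup}: the Hausdorff convergence of $\partial\{u_k>0\}$ forces the limit point $\bar w_0$ with $(\bar w_0)_N\ge\sigma/8$ to lie on $\partial\{u_0>0\}$, while the rescaled \eqref{(3.3)} forces $u_0\equiv0$ on $\{x_N\ge0\}$, a contradiction. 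The two arguments use the same two ingredients --- the $o(r^N)$ decay at a reduced boundary point and the nondegeneracy of $u$ --- but you use nondegeneracy only implicitly, packaged inside Lemma \ref{propblowup}(2) (that is exactly where it enters the Hausdorff convergence), whereas the paper uses $c_{\min}$ and the Lipschitz constant explicitly. What your compactness argument buys is a cleaner structure: no case distinction between ``free boundary point in the forbidden region'' and ``$u>0$ fills the forbidden region'', and no bookkeeping of constants (you even get any $\sigma<1$ rather than an explicit $\sigma_0=(2c_N)^{-1}$). What the paper's direct argument buys is elementarity and transparency of the dependence on $c_{\min}$ and the Lipschitz constant; in both proofs $r_\sigma$ is non-quantitative, since it depends on the rate in the limit \eqref{(3.3)} at the given point. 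Your reduction of the flatness class membership to condition (1) of Definition \ref{def-flat} (conditions (2) and (3) being vacuous for $\sigma_2=1$, $\tau=\infty$) is also correct.
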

\begin{proof} Assume for simplicity that $x_0=0$ and $\nu(x_0)=e_N$. Let $R>0$ be such that $B_{4R}\subset\Omega$.

Given $0<\varepsilon<\frac12$,
there exists $r_\varepsilon\le R$ such that
\begin{equation}\label{por-nor-med}
\frac{|\{u>0\}\cap B_r^+|}{|B_r|}<\varepsilon\quad \mbox{ if }\quad r\le r_{\varepsilon},
\end{equation}
and also a constant  $c_N>1$ so that
\begin{equation}\label{no-todo-posit}
|B_r^+\setminus\{0<x_N<\sigma r\}|\ge |B_r|(1/2-c_N \sigma)> \varepsilon |B_r| \quad\mbox{ if }\quad \sigma<\frac{1/2-\varepsilon}{c_N}.
\end{equation}
Let $r\le \frac{r_{\varepsilon}}{2}$ and suppose there exists $\bar x\in (B_r^+\setminus\{0<x_N<\sigma r\})\cap\partial\{u>0\}$.
Then, $\sup_{B_{\rho}(\bar x)}u\ge c_{\min}\rho$, if $\rho\le \rho_0=\min\{r_0, R\}$, with $c_{\min}$ and $r_0$ the
constants corresponding to $D=B_{2R}$ in the definition of weak solution.

Then, if $r\le \rho_0$, there exists $x_1\in \bar B_{\sigma r/2} (\bar x)$ such that $u(x_1)\ge c_{\min}\sigma r / 2$, implying that
\[
u(x)\ge c_{\min}\sigma r/2  - L\kappa \sigma r/2 >0 \quad \mbox{ in }\quad B_{\kappa\sigma r /2}(x_1)\subset B_{2r}^+,
\]
if $\kappa\le \min\{1, \frac{c_{\min}}{2L}\}$, where $L$ is the Lipschitz constant of $u$ in $B_{2R}$. As a consequence,
\[
\frac{|\{u>0\}\cap B_{2r}^+|}{|B_{2r}|}\ge (\kappa\sigma/4)^N,
\]
which contradicts \eqref{por-nor-med} if $(\kappa
\sigma/4)^N>\varepsilon$. Finally,  we fix $\sigma_0=(2
c_N)^{-1}$, take $\sigma<\sigma_0$ and choose $0<\varepsilon<
\frac12$ satisfying
\begin{equation*}
\frac{4}{\kappa} \varepsilon^{1/N}<\sigma<\frac{1/2-\varepsilon}{c_N}.
\end{equation*}
Then, letting  $r_{\sigma}=\min\{\frac{r_{\varepsilon}}{2},
\rho_0\}$ and $r\le r_{\sigma}$, we observe that
$(B_r^+\setminus\{0<x_N<\sigma r\})\cap\partial\{u>0\}=\emptyset$ by the above discussion, and that we cannot have $u>0$ in
$B_r^+\setminus\{0<x_N<\sigma r\}$ because of \eqref{por-nor-med} and \eqref{no-todo-posit}.
Therefore we conclude that $u\in F(\sigma,1;\infty)$ in $B_r$ with power $p$, slope $\lstar$ and rhs $f$, for
every $r\le r_{\sigma}$.
\end{proof}

Now,   we get a result that holds at  free boundary points satisfying a density condition on the zero set. This is the situation when
$u$ comes from a minimization problem as was the case in \cite{AC,ACF,DP1}, for instance.
\begin{lemm}\label{stronger-Thm4.2}
Let $p$ and $f$ as in Lemma \ref{lema 9} with $q>\max\{1,N/2\}$
and $\lstar\in C^{\alpha^*}(\Omega)$ with $0<\lambda_{\min}\le
\lstar(x)\le \lambda_{\max}<\infty$ in $\Omega$ and
$[\lstar]_{C^{\alpha^*}(\Omega)}\le C^*$. Let $u$ be a weak
solution to \ref{fbp-px} in $\Omega$ and let
$x_0\in\Omega\cap\partial\{u>0\}$ with $B_{4R}(x_0)\subset\Omega$,
$R\le 1$. Assume that
\begin{equation}\label{D}
\frac{\big|B_r(x_0)\cap\{u=0\}\big|}{|B_r(x_0)|}\ge c_0>0\quad\mbox{if}\quad r\le R.\end{equation}

Then, for every $x_1$ in $B_r(x_0)$,
\begin{equation}
\label{bound-grad-mismo} |\nabla u|\le \lstar(x_1)+C
\left(\frac{r}{R}\right)^\gamma\quad\mbox{in}\quad
B_r(x_1)\quad\mbox{if}\quad r\le R,
\end{equation}
for some constants $C$ and $0<\gamma<1$ depending only on $N$,
$p_{\min}$, $p_{\max}$, $\lambda_{\min}$,
$\|f\|_{L^\infty(B_{2R}(x_0))\cap W^{1,q}(B_{2R}(x_0))}$,
$\|p\|_{W^{1,\infty}(B_{2R}(x_0))\cap W^{2,q}(B_{2R}(x_0))}$,
$\alpha^*$, $C^*$, $q$, $\|\nabla u\|_{L^\infty(B_{2R}(x_0))}$ and
$c_0$.

\end{lemm}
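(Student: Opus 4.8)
The plan is to follow the proof of Lemma \ref{Thm4.2} almost word for word, since the flatness hypothesis $u\in F(\sigma,1;\infty)$ was used there at a single point only — to exhibit a ball sitting inside $\{u=0\}$ on which an auxiliary function is constant — and this role is now taken over by the density assumption \eqref{D}. First I would record that, $u$ being a weak solution, hypotheses $(1)$--$(2)$ of Definition \ref{weak2} hold and $u\in L^\infty_{\rm loc}(\Omega)$ by \cite{Wo}, so Proposition \ref{loc-lip} makes $u$ locally Lipschitz. Then, fixing $0<R_0\le R$ and $\ep>0$, I set $\lstar_{2R_0}=\sup_{B_{2R_0}(x_0)}\lstar$ and $U_\ep=(|\nabla u|-\lstar_{2R_0}-\ep)^+$; property $(4)$ of Definition \ref{weak2} makes $U_\ep$ vanish in a neighborhood of $\overline{B_{2R_0}}(x_0)\cap\partial\{u>0\}$, so $\{U_\ep>0\}\subset\{u>0\}$ and $|\nabla u|\ge\lambda_{\min}$ there, and $u\in W^{2,t}_{\rm loc}(\{U_\ep>0\})$ for all $t<\infty$. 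By Lemma \ref{lema 9} together with the extension-by-zero argument (\cite{LW3}, Lemma 2.1), $U_\ep$ satisfies $\mbox{div}\,\widetilde D\nabla U_\ep+\widetilde B\cdot\nabla U_\ep\ge\widetilde G$ in all of $B_{2r}(x_0)$ for $r\le R_0$, with coefficients and $\widetilde G$ obeying the bounds in \eqref{cotas-eq}.

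The heart of the argument — and the only genuinely new step — is the oscillation decay. I would let $h_\ep(r)=\sup_{B_r(x_0)}U_\ep$ and apply the weak Harnack inequality of \cite{GT} to the nonnegative supersolution $V=h_\ep(2r)-U_\ep$, obtaining $\inf_{B_r(x_0)}V+r^{2-N/q}\|\widetilde G\|_{L^q(B_{2r}(x_0))}\ge c\,\pint_{B_{3r/2}(x_0)}V$ with $c=c(N,\bar\beta,\|\widetilde B\|_{L^\infty},q)$. To bound the average of $V$ from below I would use that $u\ge0$ is locally Lipschitz, hence $\nabla u=0$ a.e. on $\{u=0\}$ and therefore $U_\ep=0$ a.e. there, so $V=h_\ep(2r)$ a.e. on $B_{3r/2}(x_0)\cap\{u=0\}$; by \eqref{D} this set has measure at least $c_0(2/3)^N|B_{3r/2}(x_0)|$, whence $\pint_{B_{3r/2}(x_0)}V\ge c_0(2/3)^N\,h_\ep(2r)$. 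Combining these and using $\inf_{B_r(x_0)}V=h_\ep(2r)-h_\ep(r)$ and $\|\widetilde G\|_{L^q}\le\bar C$ gives $h_\ep(2r)-h_\ep(r)+\bar C r^{2-N/q}\ge\bar c\,h_\ep(2r)$ with $\bar c$ depending in addition on $c_0$, which we may assume lies in $(0,1)$.

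From here the conclusion is identical to Lemma \ref{Thm4.2}: letting $\ep\to0$ yields $h(r)\le(1-\bar c)h(2r)+\bar C r^{2-N/q}$ for $h(r)=\sup_{B_r(x_0)}(|\nabla u|-\lstar_{2R_0})^+$, which, since $2-N/q>0$, iterates to a Hölder bound $h(s)\le\tilde C(s/2R_0)^{\tilde\gamma}$ for $s\le 2R_0$; combining this with the Hölder continuity of $\lstar$ (so that $\lstar_{2R_0}\le\lstar(x_1)+C^*(4R_0)^{\alpha^*}$ for $x_1\in B_{2R_0}(x_0)$) and choosing $R_0=(rR)^{1/2}$, $x_1\in B_r(x_0)$ gives \eqref{bound-grad-mismo} with $\gamma=\min\{\alpha^*/2,\tilde\gamma/2\}$ and $C$ depending on the indicated quantities and $c_0$.

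As for the main obstacle: there is essentially no new analytic difficulty beyond Lemma \ref{Thm4.2}. The one point requiring care is the claim $U_\ep=0$ a.e. on $\{u=0\}$, which I would justify by the local Lipschitz regularity of $u$ (Proposition \ref{loc-lip}, available because weak solutions satisfy hypothesis $(2)$) together with the classical fact that the gradient of a Sobolev function vanishes a.e. on any of its level sets. Everything else — the subsolution property of $U_\ep$, the extension of the coefficients, the weak Harnack step, and the iteration — is taken verbatim from the proof of Lemma \ref{Thm4.2}.
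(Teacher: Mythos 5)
Your proposal is correct and matches the paper's intended argument: the paper's proof of this lemma is literally ``as in Lemma \ref{Thm4.2}, with the density condition \eqref{D} replacing the flatness condition,'' and you have correctly located the single place where flatness entered (producing a set of positive measure proportion in $B_{3r/2}(x_0)$ on which $V=h_\ep(2r)$ before applying the weak Harnack inequality) and substituted for it the density bound together with the fact that $\nabla u=0$ a.e.\ on $\{u=0\}$. The resulting constant $\bar c$ depending additionally on $c_0$, and the final iteration with $R_0=(rR)^{1/2}$, are exactly as in the paper.
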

\begin{proof}
The proof is exactly as that of Lemma \ref{Thm4.2} the only difference being that instead of the flatness condition we use the  density condition
\eqref{D}.
\end{proof}

Now, with the ideas in the proof of Lemma \ref{Thm4.2} we can improve on the gradient.
\begin{lemm} \label{lema 10} Let
$p\in W^{1,\infty}(B_\rho)\cap W^{2,q}(B_\rho)$ with
$1<p_{\min}\le p(x)\le p_{\max}<\infty$ in $B_\rho$ and $f\in
L^\infty(B_\rho)\cap W^{1,q}(B_\rho)$ with $q>\max\{1,N/2\}$,
$\|p\|_{W^{1,\infty}(B_\rho)\cap W^{2,q}(B_\rho)}\le \widetilde
L_1$ and $\|f\|_{L^\infty(B_\rho)\cap W^{1,q}(B_\rho)}\le
\widetilde L_2$. Let $\lstar\in C^{\alpha^*}(B_\rho)$ with
$0<\lambda_{\min}\le \lstar(x)\le \lambda_{\max}<\infty$ in
$B_\rho$ and $[\lstar]_{C^{\alpha^*}(B_\rho)}\le C^*$.

 Let $0<\theta<1$. There exist $\sigma_\theta$, $c_\theta$, $C$, $\tilde C$ and $\tilde\gamma$
such that, if
\[
u\in F(\sigma,1; \tau)\mbox{ in }B_\rho\mbox{ in direction }\nu
\]
with power $p$, slope $\lstar$ and rhs $f$ and, if $\sigma\le
\sigma_\theta$, $\tau\le \sigma_\theta\sigma^2$ and $\tilde
C\rho^{\tilde\gamma}\le\lambda_{\min}\tau$, there  holds that
\[
u\in F(\theta\sigma,\theta\sigma;\theta^2\tau)\mbox{ in
}B_{\bar\rho}\mbox{ in direction }\bar\nu
\]
with the same power, slope and rhs and
\[
c_\theta\rho\le\bar\rho\le\frac14\rho,\qquad |\nu-\bar\nu|\le
C\sigma.
\]
The constants depend only on  $N$,  $p_{\min}$, $p_{\max}$,
$\lambda_{\min}$, $\lambda_{\max}$, $\widetilde L_1$, $\widetilde
L_2$, $\alpha^*$, $C^*$, $q$. The constants $\sigma_\theta$ and
$c_\theta$ depend moreover on $\theta$.
\end{lemm}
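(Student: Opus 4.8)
The plan is to follow the scheme of Theorem 6.4 in \cite{DP1}, of which this is the inhomogeneous, variable-exponent analogue: this lemma is the single iterative step which, applied along a geometric sequence of radii, yields the $C^{1,\alpha}$ regularity of the free boundary. After a rotation so that $\nu=e_N$ and the dilation \eqref{reduction} reducing to $\rho=1$ --- keeping track of how $\widetilde L_1,\widetilde L_2,C^*$ and, crucially, the condition $\widetilde C\rho^{\widetilde\gamma}\le\lambda_{\min}\tau$ transform --- the proof combines a \emph{geometric} flatness improvement (Lemma \ref{lema 8}) with a \emph{slope} improvement (Lemma \ref{Thm4.2}).

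The geometric part goes as follows. Since $\tau\le\sigma_\theta\sigma^2\le\sigma$, the hypothesis $u\in F(\sigma,1;\tau)$ implies $u\in F(\sigma,1;\sigma)$, so Proposition \ref{prop-flatness} gives two-sided flatness $u\in F(C\sigma,C\sigma;\sigma)$ in a fixed fraction of the ball, in the same direction; and since $u$ still satisfies its original gradient bound $|\nabla u|\le\lstar(0)(1+\tau)$ there, in fact $u\in F(C\sigma,C\sigma;\tau)$ --- i.e.\ these operations, read correctly, do not degrade the slope parameter. Lemma \ref{lema 8}, applied with a fixed multiple of $\theta$ as parameter, then performs the tilt: $u\in F(\theta\sigma,1;\tau)$ in a new direction $\bar\nu$ with $|\nu-\bar\nu|\le C\sigma$ and radius shrunk by a factor between $c_\theta$ and $\theta$; a second application of Proposition \ref{prop-flatness} (again using that the gradient bound persists) brings the second flatness parameter down, yielding $u\in F(C\theta\sigma,C\theta\sigma;\tau)$ in a still smaller ball. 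Renaming $\theta$ by a fixed fraction of itself, the geometry of $u$ is thus in the class $F(\theta\sigma,\theta\sigma;\tau)$, and only the slope $\tau$ remains to be improved to $\theta^2\tau$.

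That last improvement is the slope part, and is where Lemma \ref{Thm4.2} (which in turn rests on Lemma \ref{lema 9}) enters: it gives $|\nabla u|\le\lstar(x_1)+C\rho^{\gamma}$ near $0$, hence, via the H\"older continuity of $\lstar$, $|\nabla u|\le\lstar(0)\bigl(1+C\rho^{\gamma}/\lambda_{\min}\bigr)$ in a smaller ball; the standing hypothesis $\widetilde C\rho^{\widetilde\gamma}\le\lambda_{\min}\tau$ (take $\widetilde\gamma=\gamma$, $\widetilde C$ large) together with the freedom to shrink $c_\theta$ --- hence the radius at which Lemma \ref{Thm4.2} is applied, which carries an extra factor $c_\theta^{\gamma}$ absorbing the $\theta^{-2}$ --- lets us replace the slope bound by $\lstar(0)(1+\theta^2\tau)$, i.e.\ condition (3) in Definition \ref{def-flat} for the class $F(\theta\sigma,\theta\sigma;\theta^2\tau)$; the same hypothesis, with $\widetilde\gamma\le\alpha^*$, also supplies the auxiliary smallness $C^*\rho^{\alpha^*}\le\lstar(0)\tau$ that the lemmas require. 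Composing the radii through the two applications of Proposition \ref{prop-flatness} and the one of Lemma \ref{lema 8} gives $c_\theta\rho\le\bar\rho\le\frac14\rho$, and $\bar\nu$ with $|\nu-\bar\nu|\le C\sigma$ comes solely from Lemma \ref{lema 8}. I expect the main obstacle to be bookkeeping rather than conceptual: (a) justifying that flatness-to-the-right holds at \emph{all} scales $r\le R$, as Lemma \ref{Thm4.2} requires --- here I would propagate the fixed-scale flatness to smaller scales by iterating Lemma \ref{lema 1} and using the nondegeneracy of $u$, in the spirit of Lemma \ref{lema-reduced-Thm4.2}; and (b) tracking through every rescaling the dependence of constants, so that $\widetilde C,\widetilde\gamma,C$ remain independent of $\theta$ while $\sigma_\theta,c_\theta$ may depend on it, in particular checking that the error $\sim\rho^{\gamma}$ produced by the inhomogeneity $f$ and the variable exponent $p(x)$ is genuinely absorbed into the small slope parameter $\theta^2\tau$.
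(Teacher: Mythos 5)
Your overall skeleton (Proposition \ref{prop-flatness} plus Lemma \ref{lema 8} for the tilt, the machinery behind Lemma \ref{Thm4.2}/Lemma \ref{lema 9} for the gradient) is the right one, but the way you produce the improved slope $\theta^2\tau$ has a genuine gap. You claim that a \emph{single} application of Lemma \ref{Thm4.2}, at a radius shrunk by $c_\theta$, yields $|\nabla u|\le\lstar(0)(1+\theta^2\tau)$. It cannot. First, in the setting of this lemma the data live only on $B_\rho$, so Lemma \ref{Thm4.2} must be applied with $R\lesssim\rho$, and its conclusion at radius $r\sim c_\theta\rho$ is $|\nabla u|\le\lstar(x_1)+C(r/R)^\gamma\sim\lstar(x_1)+Cc_\theta^{\gamma}$: the error is a constant independent of $\tau$ and $\rho$, hence not comparable to $\theta^2\lambda_{\min}\tau$ for small $\tau$ (the hypothesis $\tilde C\rho^{\tilde\gamma}\le\lambda_{\min}\tau$ bounds $\tau$ from below only by a power of $\rho$, and $\rho$ is arbitrarily small). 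Second, even if you bypass the black box and rerun the Harnack iteration \eqref{estim-para-h(r)} starting from the excess $\le\lstar(0)\tau$ given by the flatness class, each application of the weak Harnack inequality improves the excess only by a \emph{fixed} factor $(1-\bar c)$, and the additive errors $\bar C(\mbox{radius})^{2-N/q}$ and $C^*(\mbox{radius})^{\alpha^*}$ are controlled through the hypothesis only at level $\mbox{const}\cdot\lambda_{\min}\tau$, not $\theta^2\lambda_{\min}\tau$, since $\tilde C$ and $\tilde\gamma$ are required to be independent of $\theta$. Third, each such Harnack step needs a vanishing ball at its own scale, i.e.\ flatness at that scale, which is not among the hypotheses; your proposed fix (iterating Lemma \ref{lema 1}) only degrades the flatness parameter by a factor $2$ per halving and does not by itself supply the flatness that Lemma \ref{Thm4.2} assumes at all scales.

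The paper resolves exactly this by iteration rather than by a single pass: one cycle consists of Lemma \ref{lema 8} with a small parameter $\theta_1$ (this produces the vanishing ball \eqref{theta1} at the new, smaller scale $r_1\rho$) followed by \emph{one} Harnack step at that scale, comparing with $\lstar_{2r_1\rho}$; this improves the slope by a fixed universal factor $\theta_0^2<1$, the loss $C^*(2r_1\rho)^{\alpha^*}$ being absorbed because $\theta_1$ is chosen with $r_1^{\tilde\gamma}\le\theta_0^2$, which also propagates the compatibility condition $\tilde C(\mbox{radius})^{\tilde\gamma}\le\lambda_{\min}(\mbox{current slope})$ to the next cycle. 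Repeating the cycle $m$ times gives $u\in F(\theta_0^m\sigma,1;\theta_0^{2m}\tau)$, and only then, choosing $m$ large (depending on $\theta$) and applying Proposition \ref{prop-flatness} once more, does one reach $F(\theta\sigma,\theta\sigma;\theta^2\tau)$, with $c_\theta=r_1\cdots r_m\cdot\mbox{const}$ absorbing all the $\theta$-dependence. This interleaved, fixed-gain-per-scale iteration is the missing idea in your argument; without it the exponent $\theta^2$ on the slope cannot be reached with constants $\tilde C$, $\tilde\gamma$, $C$ independent of $\theta$.
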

\begin{proof} We  will apply  Lemma \ref{lema 8} inductively, and we will obtain the improvement of the value $\tau$ with an
argument similar to the one in
Lemma \ref{Thm4.2}.

In fact, if $\sigma_{\theta}$ is small enough, we can apply
Proposition \ref{prop-flatness} to $\bar u(x)=\frac{1}{\rho}
u(\rho x)$ and we get
$$u\in F(C_0\sigma,C_0\sigma;\tau) \mbox{ in } B_{\rho/2} \mbox{ in direction } \nu,$$
with power $p$, slope $\lstar$ and rhs $f$. Then
for $0<\theta_1\leq \frac{1}{2}$ we can apply Lemma \ref{lema 8},
if again $\sigma_{\theta}$ is small, and we obtain
\begin{equation}\label{theta1}
u\in F(C_0\theta_1\sigma,1;\tau) \mbox{ in }
B_{r_1\rho} \mbox{ in direction } \nu_1,
\end{equation}
with the same power, slope and rhs, for some $r_1,
\nu_1$ with
$$c_{\theta_1}\leq 2 r_1\leq \theta_1, \mbox{ and }
|\nu_1-\nu|\leq C\sigma.
$$

In order to improve the value of $\tau$ we proceed as in the proof
of Lemma \ref{Thm4.2}. In fact, we let $R_0=R=r_1\rho$, $x_0=0$
and repeat the argument leading to \eqref{estim-para-h(r)}, with
$r=r_1\rho$. In the present case we use the fact that, because of
\eqref{theta1}, $u$ vanishes in the ball
$B_{\frac{r_1\rho}{4}}(\frac{r_1\rho}{2}\nu_1)$. We also use that,
in $B_\rho$, $|\nabla u|\le \lstar(0)(1+\tau)\le 2\lambda_{\max}$.
We obtain
$$
\sup_{B_{r_1\rho}}\big(|\nabla u|-\lstar_{2{r_1\rho}}\big)^+ \le \big(1-\bar c\big) \sup_{B_{2r_1\rho}}\big(|\nabla u|-\lstar_{2{r_1\rho}}\big)^+
+\bar C(r_1\rho)^{2-N/q},
$$
with
$$
\lstar_{2{r_1\rho}}=\sup_{B_{2{r_1\rho}}}\lstar(x),
$$
and constants $0<\bar c<1$ and $\bar C>0$ depending only on  $N$,
$p_{\min}$, $p_{\max}$, $\lambda_{\min}$, $\lambda_{\max}$,
$\widetilde L_1$, $\widetilde L_2$  and $q$. It follows that
\[
\begin{aligned}
\sup_{B_{r_1\rho}}|\nabla u|&\le \lstar_{2{r_1\rho}}+\big(1-\bar c\big) \lstar_{2{r_1\rho}}\tau
+\bar C\big(\frac{\rho}{4}\big)^{2-N/q}\\
&\le \lstar_{2{r_1\rho}}+\big(1-\frac{\bar c}{2}\big) \lstar_{2{r_1\rho}}\tau,
\end{aligned}
\]
if we let $\bar C\big(\frac{\rho}{4}\big)^{2-N/q}\le \frac{\bar
c}{2}\lambda_{\min}\tau$. Therefore, for $\hat\theta=1-\frac{\bar
c}{2}$, we get
\[
\begin{aligned}
\sup_{B_{r_1\rho}}|\nabla u|&\le \lstar_{2{r_1\rho}}(1+\hat\theta\tau)\\
&\le \lstar(0)(1+\hat\theta\tau)+C^*(2 r_1\rho)^{\alpha^*}(1+\hat\theta\tau)\\
&\le \lstar(0)\big(1+\hat\theta\tau+\frac{1-\hat\theta}{2}\tau \big)=\lstar(0)(1+\theta_0^2\tau),
\end{aligned}
\]
if $C^*{\rho}^{\alpha^*}\le \frac{1}{2}\lambda_{\min}\tau$ and
$\theta_1^{\tilde\gamma}\le \frac{1-\hat\theta}{2}$, with
${\tilde\gamma}=\min\{\alpha^*,{2-N/q}\}$ and
$\theta_0=\sqrt{\frac{1+\hat\theta}{2}}$.

We see that, if $\theta_1$ is chosen small enough,
$$u\in F(\theta_0\sigma,1;\theta_0^{2}\tau) \mbox{ in } B_{r_1\rho} \mbox{ in
direction } \nu_1,$$
with power $p$, slope $\lstar$ and rhs $f$. Moreover, $r_1^{\tilde\gamma}\le\theta_0^{2}$.

Then, we can repeat this argument a finite number of times, and we obtain
$$u\in F(\theta_0^m\sigma,1;\theta_0^{2m}\tau) \mbox{ in } B_{r_1...r_m \rho} \mbox{ in
direction } \nu_m,$$
with the same power, slope and rhs, with
$$c_{\theta_j}\leq 2 r_j\leq \theta_j, \mbox{ and }
|\nu_m-\nu|\leq \frac{C}{1-\theta_0}\sigma.$$ Finally we choose
$m$ large enough and use Proposition \ref{prop-flatness}.
\end{proof}

\end{section}
%%%%%%%%%%%%%%%%% end section flat free boundary points%%%%%%%%%%%%%%%%%%%%%%%%%%%%%%%%%%%%%%

%%%%%%%%%%%%begin section regularity of the free boundary%%%%%%%%%%%%%%%%%%
\begin{section}{Regularity of the free boundary for weak solutions to problem $P(f,p,{\lambda}^*)$} \label{sect-regularity}

In this section we study the regularity of the free boundary for
weak solutions to problem $P(f,p,{\lambda}^*)$.

We prove that the free boundary of a weak solution   is a $C^{1,\alpha}$ surface
near flat free boundary points (Theorems \ref{reg-weak-conflat+dens}, \ref{reg-weak-conflat+flat} and \ref{reg-weak-conflat}).
As a consequence we get that  the
free boundary is  $C^{1,\alpha}$   in a neighborhood of
every point in the reduced free boundary
(Theorem \ref{reg-weak-final}).

We also obtain further regularity results on the free boundary,
under further regularity assumptions on the data (Corollary
\ref{higher-reg}).

\medskip

Among Theorems  \ref{reg-weak-conflat+dens}, \ref{reg-weak-conflat+flat} and \ref{reg-weak-conflat} the most general one is Theorem \ref{reg-weak-conflat}.

Theorems \ref{reg-weak-conflat+dens} and \ref{reg-weak-conflat+flat} require the extra assumptions \eqref{D2} and \eqref{flat-every-level}, respectively.
But, under these additional assumptions, the constant in the $C^{1,\alpha}$ continuity of the free boundary becomes universal.

The difference stems from the fact that in Theorems \ref{reg-weak-conflat+dens} and \ref{reg-weak-conflat+flat} the choice of $\rho$ in the statements can be
done independently of the weak solution $u$ under consideration, whereas in Theorem \ref{reg-weak-conflat} there is a strong dependence on $u$.

We remark that the H\"older exponent $\alpha$ is universal in the three results.

\bigskip

Our first result holds at  free boundary points satisfying a density condition on the zero set. This is the situation when
$u$ comes from a minimization problem as was the case in \cite{AC,ACF,DP1}, for instance.

\begin{theo}\label{reg-weak-conflat+dens}
Let $p\in W^{1,\infty}(\Omega)\cap W^{2,q}(\Omega)$ with
$1<p_{\min}\le p(x)\le p_{\max}<\infty$ in $\Omega$ and $f\in
L^\infty(\Omega)\cap W^{1,q}(\Omega)$
 with $q>\max\{1,N/2\}$. Let $\lstar\in C^{\alpha^*}(\Omega)$ with $0<\lambda_{\min}\le \lstar(x)\le \lambda_{\max}<\infty$ in $\Omega$ and
$[\lstar]_{C^{\alpha^*}(\Omega)}\le C^*$. Let $u$ be a weak solution to \ref{fbp-px} in $\Omega$ and let $x_0\in\Omega\cap\partial\{u>0\}$
with $B_{4R}(x_0)\subset\Omega$, $R\le 1$. Assume that
\begin{equation}\label{D2}
\frac{\big|B_r(x_0)\cap\{u=0\}\big|}{|B_r(x_0)|}\ge c_0>0\quad\mbox{if}\quad r\le R.\end{equation}

Then  there are constants $\alpha$, $\beta$, $\bar\sigma_0$,  $\bar C$ and $C$ such that if
\[
u\in F(\sigma,1; \infty)\mbox{ in }B_\rho(x_0)\mbox{ in
direction }\nu
\]
with power $p$, slope $\lstar$ and rhs $f$, with  $\sigma\le
\bar\sigma_0$ and $\bar C\rho^{\beta}\le\bar\sigma_0\sigma^2$, then
\[
B_{\rho/4}(x_0)\cap\partial\{u>0\}\mbox{ is a } C^{1,\alpha}\mbox{
surface},
\]
more precisely, a graph in direction $\nu$ of a $C^{1,\alpha}$
function, and, for $x, y$ on this surface,
\begin{equation}\label{reg-c1alpha}
|\nu( x)-\nu(y)|\leq C\sigma \left|
\frac{ x-y}{\rho}\right|^{\alpha}.
\end{equation}

The constants depend only on $N$,
 $p_{\min}$, $p_{\max}$, $\lone$, $\ltwo$, $\alpha^*$, $C^*$, $q$,
$\|f\|_{L^\infty(B_{3R}(x_0))\cap W^{1,q}(B_{3R}(x_0))}$, $\|p\|_{W^{1,\infty}(B_{3R}(x_0))\cap W^{2,q}(B_{3R}(x_0))}$, $R$,
  $c_0$ and the constants $C_{\max}(B_{3R}(x_0))$ and $r_0(B_{3R}(x_0))$ in Definition \ref{weak2}.

\end{theo}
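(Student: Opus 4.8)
The plan is to combine the gradient estimate of Lemma \ref{stronger-Thm4.2} with the one–step flatness improvement of Lemma \ref{lema 10}, iterated geometrically, exactly in the spirit of \cite{AC,ACF,DP1}.

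\emph{Step 1: entering a good flatness class.} By the scaling \eqref{reduction} we may assume $x_0=0$, $\nu=e_N$. Enlarging $\bar C$ if necessary, the hypothesis $\bar C\rho^\beta\le\bar\sigma_0\sigma^2\le\bar\sigma_0$ forces $\rho\le R$, so (since \eqref{D2} holds) Lemma \ref{stronger-Thm4.2} applies at $x_0$ and, taking $x_1=x_0$, $r=\rho$, gives $|\nabla u|\le\lstar(x_0)(1+\tau_0)$ in $B_\rho(x_0)$ with $\tau_0:=\tfrac{C}{\lambda_{\min}}(\rho/R)^\gamma$. Hence $u\in F(\sigma,1;\tau_0)$ in $B_\rho(x_0)$ in direction $\nu$. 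The conditions needed to start the iteration of Lemma \ref{lema 10}, namely $\tau_0\le\sigma_\theta\sigma^2$ and $\tilde C\rho^{\tilde\gamma}\le\lambda_{\min}\tau_0$ (with $\tilde\gamma,\tilde C$ the exponent and constant of Lemma \ref{lema 10}), are all of the form ``a fixed power of $\rho$ is bounded by a constant times $\sigma^2$ (resp.\ times $\tau_0$)''; after replacing the exponent by a common $\beta\le\min\{\gamma,\tilde\gamma,\dots\}$ and enlarging $\bar C$, all of them follow from $\bar C\rho^\beta\le\bar\sigma_0\sigma^2$.

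\emph{Step 2: geometric iteration.} Fix $\theta\in(0,1)$ universal, admissible for Lemma \ref{lema 10} and large enough that $4^{-\tilde\gamma}\le\theta^2$ (possible once $\tilde\gamma$ is fixed). Applying Lemma \ref{lema 10} to $u\in F(\sigma,1;\tau_0)$ in $B_\rho(x_0)$ produces $x_1\in\partial\{u>0\}$, a direction $\nu_1$ and a radius $\rho_1$ with $c_\theta\rho\le\rho_1\le\rho/4$, $|\nu_1-\nu|\le C\sigma$, and $u\in F(\theta\sigma,\theta\sigma;\theta^2\tau_0)$ in $B_{\rho_1}(x_1)$ in direction $\nu_1$. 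Using the trivial inclusion $F(\theta\sigma,\theta\sigma;\theta^2\tau_0)\subset F(\theta\sigma,1;\theta^2\tau_0)$ we re-enter the hypothesis of Lemma \ref{lema 10}, the smallness conditions propagating because $\theta^2\tau_0\le\sigma_\theta(\theta\sigma)^2$ and $\tilde C\rho_1^{\tilde\gamma}\le\tilde C(\rho/4)^{\tilde\gamma}\le\lambda_{\min}\theta^2\tau_0$ (here the choice $4^{-\tilde\gamma}\le\theta^2$ is used). Iterating, for every $m\ge0$ we obtain $x_m\in\partial\{u>0\}$, $\nu_m$, $\rho_m$ with
\[
c_\theta\rho_{m-1}\le\rho_m\le\tfrac14\rho_{m-1},\qquad
|\nu_m-\nu_{m-1}|\le C\theta^{m-1}\sigma,\qquad
u\in F(\theta^m\sigma,\theta^m\sigma;\theta^{2m}\tau_0)\ \text{in}\ B_{\rho_m}(x_m).
\]

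\emph{Step 3: propagation to nearby free boundary points and conclusion.} After the first step the flatness at $x_1$ is two–sided in $B_{\rho_1}(x_1)$; since every free boundary point $\tilde x\in B_{\rho_1/2}(x_1)$ satisfies $|\langle\tilde x-x_1,\nu_1\rangle|\le\theta\sigma\rho_1$, translating the two half–space conditions of $F(\theta\sigma,\theta\sigma;\cdot)$ gives $u\in F(4\theta\sigma,4\theta\sigma;\theta^2\tau_0)$ in $B_{\rho_1/2}(\tilde x)$ in direction $\nu_1$ for every such $\tilde x$, in particular for every $\tilde x\in B_{\rho/4}(x_0)\cap\partial\{u>0\}$. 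Thus, after possibly replacing $\sigma$ by $4\theta\sigma$ (still $\le\bar\sigma_0$) and checking once more that the finitely many smallness conditions survive, the iteration of Step 2 can be run at each such $\tilde x$. At each point $z\in B_{\rho/4}(x_0)\cap\partial\{u>0\}$ the directions $\nu_m(z)$ form a Cauchy sequence with $|\nu_m(z)-\nu_\infty(z)|\le\tfrac{C}{1-\theta}\theta^m\sigma$ and $c_\theta^m\rho\le\rho_m(z)\le4^{-m}\rho$, so the free boundary is, near $z$, a graph in direction $\nu(z):=\nu_\infty(z)$; comparing the iterations started at two points $x,y\in B_{\rho/4}(x_0)$ at the scale $m$ for which $\rho_m\sim|x-y|$ yields $|\nu(x)-\nu(y)|\le C\sigma|(x-y)/\rho|^\alpha$ with $\alpha=\alpha(\theta,c_\theta)\in(0,1)$, which together with the dependences of the constants in Lemmas \ref{stronger-Thm4.2} and \ref{lema 10} gives the stated (universal) dependence and proves \eqref{reg-c1alpha}.

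\emph{Main obstacle.} The delicate points are: (i) making the \emph{two} smallness hypotheses of Lemma \ref{lema 10} ($\tau\le\sigma_\theta\sigma^2$ and a fixed power of the radius $\le\lambda_{\min}\tau$) survive the \emph{infinite} iteration under a \emph{single} choice of $\theta$ — this is what forces both the constraint $4^{-\tilde\gamma}\le\theta^2$ and, via Lemma \ref{stronger-Thm4.2}, the hypothesis $\bar C\rho^\beta\le\bar\sigma_0\sigma^2$; and (ii) upgrading the one–sided flatness $F(\sigma,1;\infty)$ assumed only at $x_0$ to genuine two–sided flatness at \emph{every} free boundary point of $B_{\rho/4}(x_0)$, which must be in place before the iteration can be made uniform and which relies on the non–degeneracy built into the notion of weak solution (hypothesis $(2)$ of Definition \ref{weak2}).
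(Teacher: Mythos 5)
Your overall strategy (enter a class $F(\sigma,1;\tau)$ via the gradient estimate of Lemma \ref{stronger-Thm4.2} and then iterate the flatness improvement of Lemma \ref{lema 10}) is the same as the paper's, but Step~3 contains a genuine gap: the propagation of flatness to the other free boundary points of $B_{\rho/4}(x_0)$ cannot be done in the order you propose. After one application of Lemma \ref{lema 10} the two--sided flatness is only known in a ball of radius $\rho_1$, $c_\theta\rho\le\rho_1\le\rho/4$, centered at the original point (the lemma shrinks the radius and rotates the direction, it does not move the center), so the balls $B_{\rho_1/2}(\tilde x)$ on which you ``translate'' the flatness only reach free boundary points with $|\tilde x-x_0|\le\rho_1/2\le\rho/8$ (and possibly only $c_\theta\rho/2$). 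The assertion ``in particular for every $\tilde x\in B_{\rho/4}(x_0)\cap\partial\{u>0\}$'' therefore does not follow, and covering the whole of $B_{\rho/4}(x_0)\cap\partial\{u>0\}$ is precisely what the graph statement requires. Moreover, translating the two half--space conditions is not by itself enough to place $u$ in a flatness class centered at $\tilde x$: in Definition \ref{def-flat} both the lower bound (2) and the gradient bound (3) involve the slope $\lstar$ evaluated at the center of the ball, so changing the center requires controlling $\lstar(\tilde x)-\lstar(x_0)$ through the H\"older continuity and, more importantly, a fresh bound $|\nabla u|\le\lstar(\tilde x)(1+\tau)$ near $\tilde x$, i.e.\ a new application of Lemma \ref{stronger-Thm4.2} at $\tilde x$, not a translation.

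The paper performs this transfer at the top scale, before any shrinking: Proposition \ref{prop-flatness} first gives $u\in F(C_0\sigma,C_0\sigma;\tau)$ in $B_{\rho/2}(x_0)$, so every free boundary point $x_1\in B_{\rho/2}(x_0)$ lies within $C_0\sigma\rho/2$ of the hyperplane; combining this with $|\nabla u|\le\lstar(x_1)+C_1\rho^{\gamma}\le\lstar(x_1)(1+\tau)$ in $B_{\rho/2}(x_1)\subset B_\rho(x_0)$ (Lemma \ref{stronger-Thm4.2} applied with center $x_1$) one gets $u\in F(\bar C_0\sigma,1;\tau)$ in $B_{\rho/2}(x_1)$, then Proposition \ref{prop-flatness} again yields $F(C\sigma,C\sigma;\tau)$ in $B_{\rho/4}(x_1)$, and only then is Lemma \ref{lema 10} iterated at $x_1$. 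With this reordering your Steps~1--2 and the final Cauchy/H\"older comparison of the iterations at two points go through as in the paper. A secondary point to tighten: the lower--bound requirement $\tilde C\rho_m^{\tilde\gamma}\le\lone\theta^{2m}\tau$ is most easily satisfied by taking $\tau=\bar C\rho^{\beta}$ with $\beta\le\min\{\gamma,\alpha^*,\tilde\gamma\}$ and $\bar C$ large (as the paper does), rather than the ``natural'' $\tau_0\sim(\rho/R)^{\gamma}$ produced by the gradient estimate, since nothing guarantees $\gamma\le\tilde\gamma$; enlarging $\tau$ is harmless because the gradient bound only improves.
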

\begin{proof}
Let us first get a bound for $\|\nabla u\|_{L^\infty(B_{2r_1}(x_0))}$ for a suitable $0<r_1\le R$. In fact, we denote $r_0=r_0(B_{3R}(x_0))$ and
$C_{\max}=C_{\max}(B_{3R}(x_0))$, the constants in Definition \ref{weak2}. We now let $r_1=\frac{1}{4}\min\{3R, r_0\}$ and see that there holds that
$\|u\|_{L^\infty(B_{4r_1}(x_0))}\le C_{\max}r_0$.

Then, by Proposition \ref{loc-lip}, it follows that $\|\nabla u\|_{L^\infty(B_{2r_1}(x_0))}$ can be estimated by a constant depending only on
$N$,  $p_{\min}$, $p_{\max}$, $r_1$,
$\|f\|_{L^\infty(B_{4r_1}(x_0))\cap W^{1,q}(B_{4r_1}(x_0))}$, $\|p\|_{W^{1,\infty}(B_{4r_1}(x_0))\cap W^{2,q}(B_{4r_1}(x_0))}$, $C_{\max}$ and $r_0$.

Next, we choose the constants in the statement so that $\rho\le r_1$. Then, we can apply Lemma \ref{stronger-Thm4.2} in $B_{4r_1}(x_0)$ and get, for $x\in B_{\rho}(x_0)$,
\begin{align*}
 |\nabla u(x)| \leq
\lambda^*(x_0)+C_1{\rho}^{\gamma}\le \lambda^*(x_0)\Big(1+\frac{C_1}{\lone}{\rho}^{\gamma}\Big),
\end{align*}
with $C_1$ and $\gamma$ constants depending only on $N$,  $p_{\min}$, $p_{\max}$, $\lone$, $\|f\|_{L^\infty(B_{2r_1}(x_0))\cap W^{1,q}(B_{2r_1}(x_0))}$,
$\|p\|_{W^{1,\infty}(B_{2r_1}(x_0))\cap W^{2,q}(B_{2r_1}(x_0))}$, $\alpha^*$, $C^*$, $q$, $\|\nabla u\|_{L^\infty(B_{2r_1}(x_0))}$, $c_0$ and
$r_1$.

We let $\bar C$ and $\beta$ in the statement satisfying $\bar C\ge \frac{C_1}{\lone}$ and $\beta\le\gamma$, and take
$\tau=\bar C{\rho}^{\beta}$. Therefore we obtain
$$u\in F(\sigma,1;\tau)  \mbox{ in } B_{\rho}(x_0) \mbox{
in direction } \nu,$$
with power $p$, slope $\lstar$ and rhs $f$.

Applying Proposition \ref{prop-flatness} we have that
\begin{equation}\label{flat-C0}
u\in
F(C_0\sigma,C_0\sigma;\tau)  \mbox{ in } B_{\rho/2}(x_0) \mbox{ in
direction } \nu,\end{equation}
with the same power, slope and rhs, if we choose $\bar C\ge C^*$, $\beta\le\alpha^*$, and
$\bar\sigma_0$ is small enough so that, in particular, $\tau\le\sigma$
and $C^*{\rho}^{\alpha^*}\le \bar C {\rho}^{\beta}\le\lambda_{\min}\sigma$.

Let $x_1\in B_{\rho/2}(x_0) \cap\partial\{u>0\}$. Since Lemma \ref{stronger-Thm4.2} also gives
\begin{align*}
 |\nabla u(x)| \leq
\lambda^*(x_1)+C_1{\rho}^{\gamma}\le \lambda^*(x_1)(1+\tau) \ \mbox{ in }  B_{\rho/2}(x_1)
\end{align*}
and   $\langle x_1-x_0,\nu\rangle>-C_0\sigma\frac\rho2$ there holds that,
$$u\in
F(\bar C_0\sigma,1;\tau)  \mbox{ in }  B_{\rho/2}(x_1) \mbox{ in direction
} \nu,$$
 with power $p$, slope $\lstar$ and rhs $f$, for any constant $\bar C_0\ge(C_0+2)$.

If we let $\bar\sigma_0$ small enough, the above choice of $\bar C$ and $\beta$, which implies in particular that
$\tau\le \bar C_0\sigma$ and $C^*{(\frac{\rho}{2})}^{\alpha^*}\le \lambda_{\min}\bar C_0\sigma$, allows us to
apply again Proposition \ref{prop-flatness} and deduce that
$$u\in
F(C\sigma,C\sigma;\tau)  \mbox{ in }  B_{\rho/4}(x_1) \mbox{ in
direction } \nu,$$
with the same power, slope and rhs.

We want to apply Lemma \ref{lema 10} in  $B_{\rho/4}(x_1)$ for some $0<\theta<1$. In fact, we need
$C\sigma\leq\sigma_{\theta}$, $\tau\leq \sigma_{\theta} (C\sigma)^2$ and $\tilde C{(\frac{\rho}{4})}^{\tilde\gamma}\le \lone\tau$,
which is satisfied if we let $\bar\sigma_0\le \frac{\sigma_{\theta}}{C}$, $\bar\sigma_0\le\sigma_{\theta} C^2$, $\bar C\ge \frac{\tilde C}{\lone}$ and
$\beta\le \tilde\gamma$.

Moreover, we want to apply Lemma \ref{lema 10}
inductively in order to get sequences  $\rho_m$ and $\nu_m$, with $\rho_0=\rho/4$ and $\nu_0=\nu$, such that
$$u\in
F(\theta^mC\sigma,\theta^m C\sigma;\theta^{2m}\tau)   \mbox{ in }
B_{\rho_m}(x_1) \mbox{ in direction } \nu_m,$$
with power $p$, slope $\lstar$ and rhs $f$,
with
\begin{equation} \label{rhom-num}
c_{\theta}
\rho_m\leq \rho_{m+1}\leq \rho_m/4\quad\mbox{ and }\quad|\nu_{m+1}-\nu_m|\leq
\theta^m C \sigma.
\end{equation}

For this purpose, we have to verify at each step that
$$
\theta^mC\sigma\leq\sigma_{\theta}, \quad \theta^{2m}\tau\leq \sigma_{\theta} (\theta^{m}C\sigma)^2, \quad
\tilde C\rho_m^{\tilde\gamma}\le \lone\theta^{2m}\tau.
$$
Since $\rho_m\le 4^{-m}\rho_0$, this is satisfied if, in addition, we let $\theta=2^{-\beta}<1$.

Thus, we have that $$|\langle x-x_1,\nu_m\rangle|\leq \theta^m C\sigma
\rho_m\quad\mbox{for}\quad  x\in B_{\rho_m}(x_1)\cap\partial\{u>0\}.$$

We also have that there exists $\nu(x_1)=\lim_{m\to\infty} \nu_m$  and
\begin{equation}\label{cota-normal}
|\nu(x_1)-\nu_m|\leq \frac{C\theta^m}{1-\theta} \sigma.
\end{equation}

Now let $x\in B_{\rho/4}(x_1)\cap\partial\{u>0\}$ and choose $m$
such that $\rho_{m+1}\leq |x-x_1|< \rho_m$. Then
$$|\langle x-x_1, \nu(x_1)\rangle|\leq C \theta^m
\sigma\Big(\frac{|x-x_1|}{1-\theta}+\rho_m\Big) \leq C \theta^m
\sigma \Big(\frac{1}{1-\theta}+\frac{1}{c_{\theta}}\Big)|x-x_1|
$$
and since $|x-x_1|\geq c_{\theta}^{m+1} \rho_0$ we have
\begin{equation}\label{elec-alpha}
\theta^{m+1}\leq \Big(\frac{|x-x_1|}{\rho_0}\Big)^{\alpha}\quad \mbox{ with } \quad
\alpha=\frac{\beta\log 2}{\log c_{\theta}^{-1}}=\frac{\log \theta}{\log c_{\theta}},
\end{equation}
and we obtain that
\begin{equation}\label{cota-fund-c1alpha}
|\langle x-x_1, \nu(x_1)\rangle|\leq
\frac{C\sigma}{\rho^{\alpha}}|x-x_1|^{1+\alpha},\qquad x\in B_{\rho/4}(x_1) \cap\partial\{u>0\}.
\end{equation}

\medskip

 Let us finally observe that the result in the statement follows if we take
$\bar{\sigma}_0$ small enough.

In fact, \eqref{cota-fund-c1alpha} implies that $\nu(x_1)$ is the normal to $\partial\{u>0\}$ at $x_1$.

From  \eqref{flat-C0}, \eqref{cota-fund-c1alpha} and  \eqref{cota-normal} with $m=0$ we get that $B_{\rho/4}(x_0)\cap\partial\{u>0\}$ is  a graph
in the direction $\nu$ of a function $g$ that is defined, differentiable and Lipschitz   in $B'_{\rho/4}(x_0')$. This holds if $\bar \sigma_0$ is small so that
$$\sqrt{1-(C_0\sigma)^2}\ge 1/2\quad\mbox{and}\quad C\sigma\Big(1+\frac1{1-\theta}\Big)\le1/2
\qquad\mbox{for}\quad\sigma\le \bar \sigma_0.$$

With these choices, the Lipschitz constant of $g$ is universal (observe that \eqref{flat-C0} implies that $|g(x')-g(x_1')|\le C_0\sigma\rho$ if
$x',x'_1\in B'_{\rho/4}(x'_0) $).

\medskip

In order to see that \eqref{reg-c1alpha} holds we let $x,y\in B_{\rho/2}(x_0) \cap\partial\{u>0\}$ such that $|x-y|<\rho/8$.

We can apply the construction above with $x_1=y$, so we have sequences $\rho_m=\rho_m(y)$ with $\rho_0(y)=\rho/4$, and $\nu_m=\nu_m(y)$ satisfying \eqref{rhom-num}, with
$\nu(y)=\lim_{m\to\infty} \nu_m(y)$.

Now let $m_0$ be such that
\begin{equation}\label{elec-m0}
\frac{\rho_{m_0+1}}2\le |x-y|<\frac{\rho_{m_0}}2.
\end{equation}
We use that
\begin{equation}\label{flat-en-y}
u\in
F({\sigma}_{m_0},{\sigma}_{m_0};{\tau}_{m_0})   \mbox{ in }
B_{\rho_{m_0}}(y) \mbox{ in direction } \nu_{m_0}(y),
\end{equation}
with power $p$, slope $\lstar$ and rhs $f$, for ${\sigma}_{m_0}=\theta^{m_0}C\sigma$ and ${\tau}_{m_0}=\theta^{2{m_0}}\tau$.

\medskip

In fact, we have now the following picture: $u$ is under the assumption of the theorem with $x_0$   replaced by $y$ and flatness condition \eqref{flat-en-y}. Then, with $x_1$ replaced by $x$, $\rho_0(x)=\rho_{m_0}(y)$ and
$\nu_0(x)=\nu_{m_0}(y)$, \eqref{cota-normal} with $m=0$ gives
\begin{equation*}
|\nu(x)-\nu_{m_0}(y)|=|\nu(x)-\nu_{0}(x)|\leq \frac{C{\sigma}_{m_0}}{1-\theta}.
\end{equation*}
Let us notice  that, from the choice of $\alpha$ we made in
\eqref{elec-alpha}, ${\sigma}_{m_0}=C\sigma\theta^{m_0}=C\sigma (c_{\theta}^{m_0})^{\alpha}$.
Since, by \eqref{rhom-num} and \eqref{elec-m0}, $c_{\theta}^{m_0+1}\le 4\frac{\rho_{m_0+1}}{\rho}\le \frac{8}{\rho}|x-y|$,
there holds
\begin{equation*}
|\nu(x)-\nu_{m_0}(y)|\leq \frac{C{\sigma}}{1-\theta}\left(\frac{8|x-y|}{c_{\theta}\rho }\right)^{\alpha}.
\end{equation*}
Estimate \eqref{cota-normal} also gives
\begin{equation*}
|\nu(y)-\nu_{m_0}(y)|\leq \frac{C{\sigma}}{1-\theta}\left(\frac{8|x-y|}{c_{\theta}\rho }\right)^{\alpha}.
\end{equation*}
We thus get
\begin{equation*}
|\nu(x)-\nu(y)|\leq C\sigma \left|
\frac{x-y}{\rho}\right|^{\alpha}\quad\mbox{if}\quad x,y\in B_{\rho/2}(x_0) \cap\partial\{u>0\},\quad |x-y|<\rho/8.
\end{equation*}

Finally, if $x,y\in B_{\rho/4}(x_0)\cap\partial\{u>0\}$ are such that $|x-y|\ge\rho/8$ we can find  points
$z_i\in B_{\rho/4}(x_0)\cap\partial\{u>0\}$ with $z_0=x$, $z_k=y$, $ |z_i-z_{i+1}|<\rho/8$ for every $i$ and $k$ a universal number. By applying the last estimate we get \eqref{reg-c1alpha}.

So, the theorem is proved.
\end{proof}

In the next result  we replace the density condition \eqref{D2} of Theorem \ref{reg-weak-conflat+dens} by a flatness condition at
the point, at every scale. In fact, we get

\begin{theo}\label{reg-weak-conflat+flat}
Let $p\in W^{1,\infty}(\Omega)\cap W^{2,q}(\Omega)$ with
$1<p_{\min}\le p(x)\le p_{\max}<\infty$ in $\Omega$ and $f\in
L^\infty(\Omega)\cap W^{1,q}(\Omega)$
 with $q>\max\{1,N/2\}$. Let $\lstar\in C^{\alpha^*}(\Omega)$ with $0<\lambda_{\min}\le \lstar(x)\le \lambda_{\max}<\infty$ in $\Omega$ and
$[\lstar]_{C^{\alpha^*}(\Omega)}\le C^*$. Let $u$ be a weak solution to \ref{fbp-px} in $\Omega$ and let $x_0\in\Omega\cap\partial\{u>0\}$
with $B_{4R}(x_0)\subset\Omega$, $R\le 1$.  Assume that, for every $r\le R$,
\begin{equation}\label{flat-every-level}
u\in F(1/2,1;\infty)\quad\mbox{in}\quad B_r(x_0)\quad\mbox{in some direction}\quad\nu_r,
\end{equation}
with power $p$, slope $\lstar$ and rhs $f$.

Then  there are constants $\alpha$, $\beta$, $\bar\sigma_0$,  $\bar C$ and $C$ such that if
\[
u\in F(\sigma,1; \infty)\mbox{ in }B_\rho(x_0)\mbox{ in
direction }\nu
\]
with power $p$, slope $\lstar$ and rhs $f$, with  $\sigma\le
\bar\sigma_0$ and $\bar C\rho^{\beta}\le\bar\sigma_0\sigma^2$, then
\[
B_{\rho/4}(x_0)\cap\partial\{u>0\}\mbox{ is a } C^{1,\alpha}\mbox{
surface},
\]
more precisely, a graph in direction $\nu$ of a $C^{1,\alpha}$
function, and, for $x, y$ on this surface,
$$|\nu( x)-\nu(y)|\leq C\sigma \left|
\frac{x-y}{\rho}\right|^{\alpha}.$$

The constants depend only on $N$,
 $p_{\min}$, $p_{\max}$, $\lone$, $\ltwo$, $\alpha^*$, $C^*$, $q$,
$\|f\|_{L^\infty(B_{3R}(x_0))\cap W^{1,q}(B_{3R}(x_0))}$, $\|p\|_{W^{1,\infty}(B_{3R}(x_0))\cap W^{2,q}(B_{3R}(x_0))}$, $R$
   and the constants $C_{\max}(B_{3R}(x_0))$ and $r_0(B_{3R}(x_0))$ in Definition \ref{weak2}.
\end{theo}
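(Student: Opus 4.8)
The plan is to follow, almost verbatim, the proof of Theorem~\ref{reg-weak-conflat+dens}: the only structural change is that the density hypothesis \eqref{D2} is replaced by the all-scales flatness hypothesis \eqref{flat-every-level}, and correspondingly the gradient estimate near the free boundary will be supplied by Lemma~\ref{Thm4.2} instead of Lemma~\ref{stronger-Thm4.2}. This works because Lemma~\ref{Thm4.2} has exactly the same conclusion \eqref{bound-grad} as Lemma~\ref{stronger-Thm4.2}, and its hypothesis is precisely a condition of the type \eqref{flat-every-level}. Thus the density assumption \eqref{D} is never needed; a flatness assumption at $x_0$ at every scale takes its place, and all the constants keep the same dependencies (minus $c_0$).

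First I would reduce to a controlled setting exactly as in Theorem~\ref{reg-weak-conflat+dens}. Set $r_0=r_0(B_{3R}(x_0))$, $C_{\max}=C_{\max}(B_{3R}(x_0))$ from Definition~\ref{weak2} and $r_1=\tfrac14\min\{3R,r_0\}$; hypothesis (2) of Definition~\ref{weak2} gives $\|u\|_{L^\infty(B_{4r_1}(x_0))}\le C_{\max}r_0$, so Proposition~\ref{loc-lip} bounds $\|\nabla u\|_{L^\infty(B_{2r_1}(x_0))}$ by a constant with the dependencies listed in the statement. Since \eqref{flat-every-level} holds for every $r\le R$, it holds in particular for every $r\le r_1$, so Lemma~\ref{Thm4.2} applies in $B_{4r_1}(x_0)$ and yields, for $\rho\le r_1$ and every $x_1\in B_\rho(x_0)$,
\[
|\nabla u(x_1)|\le \lstar(x_1)+C_1\rho^{\gamma}\le \lstar(x_1)\Big(1+\tfrac{C_1}{\lone}\rho^{\gamma}\Big),
\]
with $C_1,\gamma$ depending only on the admissible data. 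Choosing $\bar C\ge C_1/\lone$, $\beta\le\gamma$ and $\tau=\bar C\rho^{\beta}$, this says $u\in F(\sigma,1;\tau)$ in $B_\rho(x_0)$ in direction $\nu$.

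From here the argument is identical to that of Theorem~\ref{reg-weak-conflat+dens}. I would apply Proposition~\ref{prop-flatness} to obtain \eqref{flat-C0}, i.e.\ $u\in F(C_0\sigma,C_0\sigma;\tau)$ in $B_{\rho/2}(x_0)$ (enlarging $\bar C$, shrinking $\beta$ and $\bar\sigma_0$ so that $\tau\le\sigma$ and $C^*\rho^{\alpha^*}\le\lone\sigma$). For a free boundary point $x_1\in B_{\rho/2}(x_0)\cap\partial\{u>0\}$, Lemma~\ref{Thm4.2} again gives $|\nabla u|\le\lstar(x_1)(1+\tau)$ in $B_{\rho/2}(x_1)$, hence $u\in F(\bar C_0\sigma,1;\tau)$ in $B_{\rho/2}(x_1)$, and one further application of Proposition~\ref{prop-flatness} gives $u\in F(C\sigma,C\sigma;\tau)$ in $B_{\rho/4}(x_1)$. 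Then I would iterate Lemma~\ref{lema 10} with $\theta=2^{-\beta}$, producing radii $\rho_m$ and directions $\nu_m$ with $c_\theta\rho_m\le\rho_{m+1}\le\rho_m/4$, $|\nu_{m+1}-\nu_m|\le\theta^m C\sigma$ and $u\in F(\theta^m C\sigma,\theta^m C\sigma;\theta^{2m}\tau)$ in $B_{\rho_m}(x_1)$; the choices above guarantee the hypotheses of Lemma~\ref{lema 10} persist at every step because $\rho_m\le 4^{-m}\rho_0$ and $\theta^{2m}=4^{-m\beta}$ decay compatibly. This produces a limiting normal $\nu(x_1)$, the estimate $|\langle x-x_1,\nu(x_1)\rangle|\le C\sigma\rho^{-\alpha}|x-x_1|^{1+\alpha}$ with $\alpha=\log\theta/\log c_\theta$, and, via \eqref{flat-C0}, that $B_{\rho/4}(x_0)\cap\partial\{u>0\}$ is the graph in direction $\nu$ of a differentiable function with universal Lipschitz constant once $\bar\sigma_0$ is small. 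The $C^{1,\alpha}$ modulus \eqref{reg-c1alpha} then follows by re-centering the whole construction at $x_1=y$ together with a chaining argument covering pairs $x,y$ with $|x-y|\ge\rho/8$, exactly as before.

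I do not expect a genuinely new obstacle here: Lemma~\ref{Thm4.2} was designed precisely so that an assumption of type \eqref{flat-every-level} substitutes for the density assumption \eqref{D} in Lemma~\ref{stronger-Thm4.2}. The only points needing care are the bookkeeping of constants and the observation that, although \eqref{flat-every-level} is assumed at $x_0$ only, the conclusion of Lemma~\ref{Thm4.2} is a \emph{pointwise} gradient bound valid at all nearby free boundary points $x_1$; that is exactly what makes the re-centering at such $x_1$ legitimate. One also has to check that \eqref{flat-every-level} descends to all scales $r\le r_1$ used above, which is immediate since $r_1\le R$.
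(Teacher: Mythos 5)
Your proposal is correct and coincides with the paper's own argument: the paper proves Theorem \ref{reg-weak-conflat+flat} by repeating the proof of Theorem \ref{reg-weak-conflat+dens} verbatim, replacing Lemma \ref{stronger-Thm4.2} by Lemma \ref{Thm4.2}, which is exactly what you do. Your added observations (that \eqref{flat-every-level} persists at all scales $r\le r_1$ and that the conclusion of Lemma \ref{Thm4.2} is a pointwise bound valid at nearby free boundary points $x_1$, which legitimizes the re-centering) are precisely the points implicitly used in the paper.
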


\begin{proof}
The proof is exactly as that of Theorem  \ref{reg-weak-conflat+dens} the only difference being that instead of
using Lemma \ref{stronger-Thm4.2}, we make use of Lemma \ref{Thm4.2}.
\end{proof}

Our last result on the regularity
of the free boundary of a weak solution in a neighborhood of a
flat free boundary point holds without the extra assumptions \eqref{D2} and \eqref{flat-every-level} of Theorems \ref{reg-weak-conflat+dens} and
\ref{reg-weak-conflat+flat}. In fact, we get

\begin{theo}\label{reg-weak-conflat} Let $p\in W^{1,\infty}(\Omega)\cap W^{2,q}(\Omega)$ with
$1<p_{\min}\le p(x)\le p_{\max}<\infty$ in $\Omega$ and $f\in
L^\infty(\Omega)\cap W^{1,q}(\Omega)$
 with $q>\max\{1,N/2\}$. Let $\lstar\in C^{\alpha^*}(\Omega)$ with $0<\lambda_{\min}\le \lstar(x)\le \lambda_{\max}<\infty$ in $\Omega$ and
$[\lstar]_{C^{\alpha^*}(\Omega)}\le C^*$. Let $u$ be a weak solution to \ref{fbp-px} in $\Omega$ and let $x_0\in\Omega\cap\partial\{u>0\}$.

Then  there are constants $\alpha$, $\bar\sigma_0$ and $C$ such that if
\[
u\in F(\sigma,1; \infty)\mbox{ in }B_\rho(x_0)\mbox{ in
direction }\nu
\]
with power $p$, slope $\lstar$ and rhs $f$, with  $\sigma\le
\bar\sigma_0$ and $\rho$ small enough, then
\[
B_{\rho/4}(x_0)\cap\partial\{u>0\}\mbox{ is a } C^{1,\alpha}\mbox{
surface},
\]
more precisely, a graph in direction $\nu$ of a $C^{1,\alpha}$
function, and, for $x, y$ on this surface,
\begin{equation*}
|\nu( x)-\nu(y)|\leq C\sigma \left|
\frac{ x-y}{\rho}\right|^{\alpha}.
\end{equation*}

The constants $\alpha$, $\bar\sigma_0$ and $C$ depend only on $N$,
 $p_{\min}$, $p_{\max}$,
$\|f\|_{L^\infty(\Omega)\cap W^{1,q}(\Omega)}$, $\|p\|_{W^{1,\infty}(\Omega)\cap W^{2,q}(\Omega)}$,  $\lone$, $\ltwo$, $\alpha^*$, $C^*$ and $q$.
\end{theo}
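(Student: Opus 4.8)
The plan is to reproduce the scheme of the proofs of Theorems \ref{reg-weak-conflat+dens} and \ref{reg-weak-conflat+flat}: first produce a gradient estimate $|\nabla u|\le\lstar(\cdot)(1+\tau)$ on a small ball around $x_0$ with $\tau$ small, so that $u$ lies in a flatness class $F(\sigma,1;\tau)$ with $\sigma$ and $\tau$ both controlled; then pass to two-sided flatness via Proposition \ref{prop-flatness} and Lemma \ref{lema 8}, and iterate Lemma \ref{lema 10} to generate radii $\rho_m$ and directions $\nu_m$ with $c_\theta\rho_m\le\rho_{m+1}\le\rho_m/4$ and $|\nu_{m+1}-\nu_m|\le\theta^mC\sigma$; finally extract $\nu(x_1)=\lim_m\nu_m$, obtain the decay $|\langle x-x_1,\nu(x_1)\rangle|\le C\sigma\rho^{-\alpha}|x-x_1|^{1+\alpha}$ on $B_{\rho/4}(x_1)\cap\partial\{u>0\}$, and conclude the $C^{1,\alpha}$ graph representation and \eqref{reg-c1alpha} by the two-point argument at the end of the proof of Theorem \ref{reg-weak-conflat+dens}. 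Since this last part is self-contained, its constants $\alpha$, $\bar\sigma_0$ and $C$ are exactly the universal ones coming from Lemma \ref{lema 10} and Corollary \ref{coro-ell}.

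The only point that is genuinely new is the initial gradient estimate. In Theorem \ref{reg-weak-conflat+flat} it was furnished by Lemma \ref{Thm4.2}, whose hypothesis is flatness at every scale $r\le R$; here only flatness at the single scale $\rho$ is assumed. First I would fix a ball $B_{4R_u}(x_0)\subset\Omega$ with $R_u=R_u(u)\le1$; since $u$ is a weak solution, Proposition \ref{loc-lip}, Lemma \ref{vale-viejo(2)} and property $(2)$ of Definition \ref{weak2} supply a Lipschitz bound $\|\nabla u\|_{L^\infty(B_{2R_u}(x_0))}\le L_u$ and nondegeneracy constants, all depending on $u$. The next step is a blow-up/compactness argument: it suffices to show that, once $\rho$ is small enough (depending on $u$ and $\sigma$), one has $u\in F(1/2,1;\infty)$ in $B_r(x_0)$, in some direction $\nu_r$, for every $r\le R_u$. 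Granting this, Lemma \ref{Thm4.2} applies on $B_{4R_u}(x_0)$ and gives, for $x_1\in B_\rho(x_0)$, $|\nabla u|\le\lstar(x_1)+C(\rho/R_u)^\gamma\le\lstar(x_1)\bigl(1+\tau\bigr)$ in $B_\rho(x_1)$ with $\tau=C'(\rho/R_u)^\gamma$; taking $\rho$ small enough makes $\tau\le\bar\sigma_0\sigma^2$, and also makes $C^*\rho^{\alpha^*}$ and $\tilde C\rho^{\tilde\gamma}$ as small as the Lemma \ref{lema 10} iteration requires, so that $u\in F(\sigma,1;\tau)$ in $B_\rho(x_0)$ in direction $\nu$, and the scheme of Theorem \ref{reg-weak-conflat+dens} takes over verbatim. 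Notice that every $u$-dependent quantity — $L_u$, $R_u$, $C^*$, $\alpha^*$, the nondegeneracy constants — is absorbed into the smallness threshold on $\rho$ and does not enter $\alpha$, $\bar\sigma_0$, $C$.

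To establish the claimed all-scale mild flatness near $x_0$ I would argue by contradiction and blow up: given admissible weak solutions $u_k$, free-boundary points $x_k$, $\sigma_k\to0$ and $\rho_k\to0$ with $u_k\in F(\sigma_k,1;\infty)$ in $B_{\rho_k}(x_k)$ but violating the conclusion, set $\bar u_k(x)=\rho_k^{-1}u_k(x_k+\rho_k x)$ and pass, as in Lemma \ref{propblowup}, to a blow-up limit $u_0$; since $\bar p_k\to p_0$, $\bar f_k\to0$, $\bar\lstar_k\to\lstar_0$ and $\sigma_k\to0$, the limit $u_0$ is nonnegative, Lipschitz, nondegenerate, $p_0$-harmonic in $\{u_0>0\}$, vanishes on $\{x_N\ge0\}$, has $0\in\partial\{u_0>0\}$, and inherits from property $(4)$ (and, at reduced points, from the asymptotic development \eqref{asym-w}) the free-boundary slope $\lstar_0$; hence $u_0(x)=\lstar_0 x_N^-$. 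Uniform closeness of $\bar u_k$ to this half-plane together with the nondegeneracy then forces $\{\bar u_k>0\}\subset\{x_N<C\eta_k\}$ with $\eta_k\to0$, i.e. mild flatness at every small scale, contradicting the assumption. I expect this rigidity step to be the main obstacle: for a general weak solution there is no monotonicity formula, so ruling out non-planar global profiles has to be done through the weak-solution axioms of Definition \ref{weak2} (nondegeneracy, the gradient bound, and the asymptotic development at reduced points), exploiting that the flatness parameter of the approximating sequence tends to zero; passing property $(4)$ — a pointwise $\limsup$ — to the limit uniformly enough to pin down the slope is the delicate part.
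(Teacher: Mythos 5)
Your scaffolding (Proposition \ref{prop-flatness}, Lemma \ref{lema 8}, the inductive use of Lemma \ref{lema 10}, and the final two-point argument from Theorem \ref{reg-weak-conflat+dens}) is the right one and is indeed what the paper reuses; the problem lies in the one step you had to supply, the initial gradient bound. You propose to obtain it from Lemma \ref{Thm4.2} by first showing that, for $\rho$ small, $u\in F(1/2,1;\infty)$ in $B_r(x_0)$ for \emph{every} $r\le R_u$, with $R_u$ a macroscopic radius. This cannot be established: the theorem assumes flatness only at the single scale $\rho$, and that hypothesis carries no information whatsoever at scales $r\in(\rho,R_u]$ --- a weak solution may have a $\sigma$-flat free boundary point at scale $\rho$ while $\{u>0\}$ is nowhere close to lying on one side of a hyperplane at unit scale. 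The all-scale condition you want is exactly the extra hypothesis \eqref{flat-every-level} of Theorem \ref{reg-weak-conflat+flat} (with threshold $1/2$), i.e.\ precisely the assumption that Theorem \ref{reg-weak-conflat} is meant to dispense with, so deriving it would make the theorem empty of new content. Nor can the blow-up rigidity argument you sketch repair this: blowing up along $\rho_k\to0$ only probes scales below $\rho_k$, and even there the classification of the limit as the half-plane profile for a \emph{general} weak solution (no monotonicity formula, the development \eqref{asym-w} only at $\mathcal H^{N-1}$-a.e.\ reduced point, property (4) only a pointwise $\limsup$) is the step you yourself flag as unresolved.

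The paper's proof avoids Lemma \ref{Thm4.2} altogether and is much simpler: property (4) of Definition \ref{weak2} at the point $x_0$, namely $\limsup_{x\to x_0,\,u(x)>0}|\nabla u(x)|\le\lambda^*(x_0)$, directly gives $|\nabla u|\le\lambda^*(x_0)\big(1+\tfrac12\bar\sigma_0\sigma^2\big)$ in $B_\rho(x_0)$ once $\rho\le\rho_1(u,x_0,\bar\sigma_0,\sigma,\lambda_{\min})$, and the H\"older continuity of $\lambda^*$ transfers this to $|\nabla u|\le\lambda^*(x_1)(1+\tau)$, $\tau=\bar\sigma_0\sigma^2$, on balls about nearby free boundary points $x_1$, again for $\rho$ small. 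This is exactly where the strong, non-quantified dependence of ``$\rho$ small enough'' on $u$ and $x_0$ enters (as the paper remarks before the statements), while $\alpha$, $\bar\sigma_0$ and $C$ stay universal; from there the iteration proceeds as in Theorem \ref{reg-weak-conflat+dens}, checking at each step that $\tilde C\rho_m^{\tilde\gamma}\le\lambda_{\min}\theta^{2m}\tau$ by taking $\rho$ small and $\theta=2^{-\tilde\gamma}$. So the correct fix for your gap is not a compactness/rigidity lemma but this direct use of the weak-solution axiom at $x_0$.
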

\begin{proof}
Since
\begin{align*}
& \limsup_{\stackrel{x\to x_0}{u(x)>0}} |\nabla u(x)| \leq
\lambda^*(x_0),
\end{align*}
given $\bar\sigma_0$ and $\sigma\le\bar\sigma_0$, there exists $\rho_1=\rho_1(u, x_0, \bar\sigma_0,\sigma,\lambda_{\min})$ such that, if $\rho\le\rho_1$,
\begin{align}\label{cota-en-x0}
 |\nabla u(x)| \leq
 \lambda^*(x_0)\Big(1+\frac{\bar\sigma_0 \sigma^2}{2}\Big), \quad \mbox{ for } x\in B_{\rho}(x_0).
\end{align}

We take $\tau=\bar\sigma_0 \sigma^2$ and  obtain
$$u\in F(\sigma,1;\tau)  \mbox{ in } B_{\rho}(x_0) \mbox{
in direction } \nu,$$
with power $p$, slope $\lstar$ and rhs $f$.

Applying Proposition \ref{prop-flatness} we have that
$$u\in
F(C_0\sigma,C_0\sigma;\tau)  \mbox{ in } B_{\rho/2}(x_0) \mbox{ in
direction } \nu,$$
with the same power, slope and rhs, if
$\bar\sigma_0$ is small enough so that, in particular, $\tau\le\sigma$
and $\rho\le\rho_2(C^*,\alpha^*, \lambda_{\min}, \sigma)$ so that  $C^*{\rho}^{\alpha^*}\le\lambda_{\min}\sigma$.

Let $x_1\in B_{\rho/2}(x_0) \cap\partial\{u>0\}$. From \eqref{cota-en-x0} and the H\"older continuity of $\lambda^*(x)$ we get
\begin{align*}
 |\nabla u(x)| \leq
\big(\lambda^*(x_1)+C^*{({\rho}/{2})}^{\alpha^*}\big)\Big(1+\frac{\bar\sigma_0 \sigma^2}{2}\Big)\le \lambda^*(x_1)(1+\tau) \quad\mbox{ in }  B_{\rho/2}(x_1),
\end{align*}
if $\rho\le\rho_3(C^*,\alpha^*, \lambda_{\min},\bar\sigma_0, \sigma)$, so that $C^*{({\rho}/{2})}^{\alpha^*}\le\lambda_{\min}\frac{\bar\sigma_0 \sigma^2}{4}$.

Then,
$$u\in
F(\bar C_0\sigma,1;\tau)  \mbox{ in }  B_{\rho/2}(x_1) \mbox{ in direction
} \nu,$$
 with power $p$, slope $\lstar$ and rhs $f$, for any constant $\bar C_0\ge C_0+2$.

If we let $\bar\sigma_0$ small enough, so that, in particular, $\tau\le \bar C_0\sigma$,
and take $\rho\le\rho_4(C^*,\alpha^*, \lambda_{\min},\bar C_0, \sigma)$ so that $C^*{(\frac{\rho}{2})}^{\alpha^*}\le \lambda_{\min}\bar C_0\sigma$,
we can apply again Proposition \ref{prop-flatness} and deduce that
$$u\in
F(C\sigma,C\sigma;\tau)  \mbox{ in }  B_{\rho/4}(x_1) \mbox{ in
direction } \nu,$$
with the same power, slope and rhs.

We want to apply Lemma \ref{lema 10} in  $B_{\rho/4}(x_1)$ for some $0<\theta<1$. In fact, we need
$C\sigma\leq\sigma_{\theta}$, $\tau\leq \sigma_{\theta} (C\sigma)^2$ and $\tilde C{(\frac{\rho}{4})}^{\tilde\gamma}\le \lone\tau$,
which is satisfied if we let $\bar\sigma_0\le \frac{\sigma_{\theta}}{C}$, $\bar\sigma_0\le\sigma_{\theta} C^2$ and
$\rho\le\rho_5(\tilde C,\tilde\gamma, \lambda_{\min},\bar\sigma_0, \sigma).$

Moreover, we want to apply Lemma \ref{lema 10}
inductively in order to get sequences  $\rho_m$ and $\nu_m$, with $\rho_0=\rho/4$ and $\nu_0=\nu$, such that
$$u\in
F(\theta^mC\sigma,\theta^m C\sigma;\theta^{2m}\tau)   \mbox{ in }
B_{\rho_m}(x_1) \mbox{ in direction } \nu_m,$$
with power $p$, slope $\lstar$ and rhs $f$,
with $c_{\theta}
\rho_m\leq \rho_{m+1}\leq \rho_m/4$ and $|\nu_{m+1}-\nu_m|\leq
\theta^m C \sigma$.

For this purpose, we have to verify at each step
$$
\theta^mC\sigma\leq\sigma_{\theta}, \quad \theta^{2m}\tau\leq \sigma_{\theta} (\theta^{m}C\sigma)^2, \quad
\tilde C\rho_m^{\tilde\gamma}\le \lone\theta^{2m}\tau.
$$
Since $\rho_m\le 4^{-m}\rho_0$, this is satisfied if, in addition, we let $\theta=2^{-\tilde\gamma}<1$.

Now the proof follows as that of Theorem \ref{reg-weak-conflat+dens}, with $\alpha=\frac{\tilde\gamma\log 2}{\log c_{\theta}^{-1}}$, and the conclusion is
obtained if $\rho\le\bar\rho_0=\min\{\rho_1, \rho_2, \rho_3, \rho_4, \rho_5\}$.
\end{proof}

\medskip

As a consequence  of Theorem  \ref{reg-weak-conflat} we obtain

\begin{theo} \label{reg-weak-final} Let $f$, $p$ and $\lambda^*$ be as in Theorem \ref{reg-weak-conflat}.
Let $u$ be a weak solution of \ref{bernoulli-px} in $\Omega$ and let
$x_0\in\Omega\cap\partial_{\rm{red}}\{u>0\}$. There exists $\bar r_0>0$
such that $B_{\bar r_0}(x_0)\cap\partial\{u>0\}$ is a $C^{1,\alpha}$
surface for some $0<\alpha<1$. It follows that, for some $0<\gamma<1$, 
$u$ is $C^{1,\gamma}$ up to $B_{\bar r_0}(x_0)\cap\partial\{u>0\}$ and the free boundary
condition is satisfied in the classical sense. In addition, for every $x_1\in B_{\bar r_0}(x_0)\cap\partial\{u>0\}$ there
is a neighborhood $\mathcal U$ such that $\nabla u\neq0$
in  $\mathcal U\cap\{u>0\}$, $u\in W_{\rm{loc}}^{2,2}(\mathcal
U\cap\{u>0\})$ and the equation is satisfied in a pointwise sense
in $\mathcal U\cap\{u>0\}$.

If moreover $\nabla p$ and $f$ are
H\"older continuous in $\Omega$, then $u\in C^2(\mathcal
U\cap\{u>0\})$ and the equation is satisfied in the classical
sense in $\mathcal U\cap\{u>0\}$.
\end{theo}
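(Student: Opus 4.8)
The plan is to produce the $C^{1,\alpha}$ piece of free boundary from the flatness that is automatically available at reduced free boundary points, and then to bootstrap. For the first step, since $x_0\in\Omega\cap\partial_{\rm red}\{u>0\}$, Lemma~\ref{lema-reduced-Thm4.2} produces $\sigma_0>0$ such that, after fixing $\sigma<\min\{\sigma_0,\bar\sigma_0\}$ with $\bar\sigma_0$ the constant of Theorem~\ref{reg-weak-conflat}, one has $u\in F(\sigma,1;\infty)$ in $B_r(x_0)$ in direction $\nu(x_0)$ for every $r\le r_\sigma$. Taking $\rho\le r_\sigma$ small enough to meet the smallness requirement on $\rho$ in Theorem~\ref{reg-weak-conflat} and applying that theorem, $B_{\rho/4}(x_0)\cap\partial\{u>0\}$ is a $C^{1,\alpha}$ graph in direction $\nu(x_0)$; I would set $\bar r_0=\rho/4$.

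Next I would upgrade this to $C^{1,\gamma}$ regularity of $u$ up to the free boundary and derive the classical free boundary condition. First I would observe that the iteration in the proof of Theorem~\ref{reg-weak-conflat} (repeated application of Lemma~\ref{lema 10} at geometrically decreasing scales) actually produces, at \emph{every} $x_1\in B_{\bar r_0}(x_0)\cap\partial\{u>0\}$, a direction $\nu(x_1)$ and flatness classes with parameters decaying geometrically in balls $B_{\rho_m}(x_1)$; together with the lower gradient bound of Lemma~\ref{lema 3} and the upper gradient bound of Lemma~\ref{Thm4.2}, this yields the asymptotic development
\[
u(x)=\lstar(x_1)\langle x-x_1,\nu(x_1)\rangle^-+O(|x-x_1|^{1+\alpha})
\]
at every such $x_1$. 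Combining this with the interior H\"older gradient estimates for $\Delta_{p(x)}u=f$ and with boundary $C^{1,\gamma}$ regularity for the Dirichlet problem on the $C^{1,\alpha}$ domain $\{u>0\}\cap B_{\bar r_0}(x_0)$ with bounded right hand side (cf.\ \cite{Fan}), one obtains that $u$ is $C^{1,\gamma}$ up to $B_{\bar r_0}(x_0)\cap\partial\{u>0\}$ for some $0<\gamma<1$, with $\nabla u(x_1)=-\lstar(x_1)\nu(x_1)$; in particular $|\nabla u|=\lstar$ on that portion of the free boundary, i.e.\ the free boundary condition is satisfied in the classical sense.

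For the interior statements, since $\nabla u(x_1)=-\lstar(x_1)\nu(x_1)$ with $\lstar(x_1)\ge\lone>0$ and $\nabla u$ is continuous up to the free boundary, each $x_1\in B_{\bar r_0}(x_0)\cap\partial\{u>0\}$ has a neighborhood $\mathcal U$ with $|\nabla u|\ge\lone/2$ in $\mathcal U\cap\{u>0\}$; hence $\nabla u\ne0$ there and the equation is uniformly elliptic. By the argument of Theorem~3.2 in \cite{CL2} (as used in the proof of Lemma~\ref{lema 9}) one gets $u\in W^{2,2}_{\rm loc}(\mathcal U\cap\{u>0\})$, and rewriting the equation in nondivergence form and invoking Lemma~9.16 in \cite{GT} bootstraps this to $u\in W^{2,t}_{\rm loc}(\mathcal U\cap\{u>0\})$ for every $t<\infty$, so $\Delta_{p(x)}u=f$ holds in the pointwise (a.e.) sense. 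If moreover $\nabla p$ and $f$ are H\"older continuous, then, $u$ being $C^{1,\gamma}_{\rm loc}$, the nondivergence form of the equation has H\"older coefficients and right hand side and $|\nabla u|$ is bounded away from $0$, so interior Schauder estimates give $u\in C^2(\mathcal U\cap\{u>0\})$ and the equation holds classically.

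The main obstacle is the second step: upgrading the \emph{a.e.}\ asymptotic development of Definition~\ref{weak2}(3) to one valid at \emph{every} flat free boundary point, and obtaining $C^{1,\gamma}$ regularity of $u$ up to the $C^{1,\alpha}$ free boundary. This requires carefully coupling the geometric flatness iteration (which pins down $\nu(x_1)$ and controls the size of the error) with boundary $C^{1,\gamma}$ estimates for the $p(x)$-Laplacian on a $C^{1,\alpha}$ domain, quoted in a form valid for a bounded right hand side. Once $\nabla u\ne0$ near the free boundary has been established, the remaining interior assertions are routine.
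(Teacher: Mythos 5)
Your proposal is correct and follows essentially the same route as the paper: Lemma \ref{lema-reduced-Thm4.2} applied at $x_0$ combined with Theorem \ref{reg-weak-conflat} gives the $C^{1,\alpha}$ surface, the $C^{1,\gamma}$ regularity of $u$ up to the free boundary is obtained from the boundary estimates of \cite{Fan} (Theorem 1.2 there), and the interior statements rest on the same tools (\cite{CL2}, Lemma 9.16 of \cite{GT}, Schauder) already invoked in Lemma \ref{lema 9}. The paper's own proof is just this brief citation argument; your extra derivation of a pointwise asymptotic development at every flat point is additional detail consistent with, but not required by, the paper's reasoning.
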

\begin{proof}
The result follows from Theorem \ref{reg-weak-conflat}, by applying Lemma \ref{lema-reduced-Thm4.2} at the point $x_0$.

The $C^{1,\gamma}$ smoothness of $u$ up to $\partial\{u>0\}$, for some $0<\gamma<1$, follows from the regularity results up to the boundary of \cite{Fan} (see Theorem 1.2 in \cite{Fan}).
\end{proof}

We can also obtain higher regularity of $\partial\{u>0\}$ if the data are smoother. We have
\begin{coro}\label{higher-reg}
Let $u$, $x_0$ and $\bar r_0$ be as in Theorem \ref{reg-weak-final}.
Assume moreover that $p\in C^2(\Omega)$, $f\in C^1(\Omega)$ and
$\lambda^*\in C^2(\Omega)$, then
$B_{\bar r_0}(x_0)\cap\partial\{u>0\}\in C^{2,\mu}$ for every
$0<\mu<1$. If $p\in C^{m+1,\mu}(\Omega)$, $f\in C^{m,\mu}(\Omega)$
and $\lambda^*\in C^{m+1,\mu}(\Omega)$ for some $0<\mu<1$ and
$m\ge1$, then $B_{\bar r_0}(x_0)\cap\partial\{u>0\}\in C^{m+2,\mu}$.

Finally, if $p$, $f$ and $\lambda^*$ are analytic, then
$B_{\bar r_0}(x_0)\cap\partial\{u>0\}$ is analytic.
\end{coro}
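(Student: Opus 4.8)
The plan is to apply the classical hodograph--Legendre method. By Theorem \ref{reg-weak-final}, after a rotation there is a neighborhood $\mathcal U$ of $x_0$ in which $\{u>0\}\cap\mathcal U=\{x_N<g(x')\}\cap\mathcal U$ with $g\in C^{1,\alpha}$, $u\in C^{1,\gamma}$ up to $\mathcal U\cap\partial\{u>0\}$, $0<c\le|\nabla u|\le C$ with $u_{x_N}<0$ in $\mathcal U\cap\{u>0\}$, the equation $\Delta_{p(x)}u=f$ holds pointwise there, and $|\nabla u|=\lambda^*(x)$ holds classically on $\mathcal U\cap\partial\{u>0\}$. Dividing the non--divergence form of the $p(x)$--Laplacian by $|\nabla u|^{p(x)-2}>0$, the equation becomes
\[
\Delta u+(p(x)-2)\frac{u_{x_i}u_{x_j}}{|\nabla u|^2}\,u_{x_ix_j}+\log|\nabla u|\,\langle\nabla u,\nabla p\rangle=|\nabla u|^{2-p(x)}f ,
\]
a uniformly elliptic non--divergence equation whose lower order part involves $\nabla p$ and $f$.

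Next I would introduce the partial hodograph coordinates $y'=x'$, $y_N=u(x)$. Since $u_{x_N}<0$, this is a $C^{1,\gamma}$ diffeomorphism of $\mathcal U\cap\{u\ge0\}$ (after shrinking $\mathcal U$) onto a relatively open subset of $\overline{\{y_N\ge0\}}$ sending $\partial\{u>0\}$ into $\{y_N=0\}$, with inverse $x_N=v(y)$, $v\in C^{1,\gamma}$ up to $\{y_N=0\}$. Using $u_{x_i}=-v_{y_i}/v_{y_N}$ for $i<N$ and $u_{x_N}=1/v_{y_N}$, the equation for $u$ transforms into
\[
\sum_{i,j}a^{ij}(y,v,\nabla v)\,v_{y_iy_j}=\mathcal F(y,v,\nabla v)\qquad\text{in }\{y_N>0\},
\]
with $a^{ij}$ smooth in its arguments, uniformly elliptic (because $|\nabla v|$ is bounded above and below), and depending on $(y,v)$ only through $p(y',v)$, while $\mathcal F$ depends on $(y,v)$ through $p(y',v)$, $\nabla p(y',v)$ and $f(y',v)$. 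Since $|\nabla u|^2=(1+|\nabla_{y'}v|^2)/v_{y_N}^2$, the free boundary condition becomes
\[
1+|\nabla_{y'}v|^2=\lambda^*\big(y',v(y',0)\big)^2\,v_{y_N}^2\qquad\text{on }\{y_N=0\},
\]
a nonlinear \emph{oblique} boundary condition, as $v_{y_N}$ stays away from $0$.

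Then I would bootstrap with Schauder theory for oblique boundary value problems (cf. \cite{GT}). With $v\in C^{1,\gamma}$ and, in the first case, $p\in C^2$, $f\in C^1$, $\lambda^*\in C^2$ (so $\nabla p\in C^1$), the coefficients of the linearized equation and of the linearized boundary operator are H\"older continuous up to $\{y_N=0\}$, so the oblique Schauder estimate gives $v\in C^{2,\mu}$ up to $\{y_N=0\}$ for every $0<\mu<1$; hence $g(\cdot)=v(\cdot,0)\in C^{2,\mu}$. For the iteration one differentiates the equation and the boundary condition tangentially and uses the equation to trade tangential for normal derivatives, gaining one order per step: if $p\in C^{m+1,\mu}$, $f\in C^{m,\mu}$, $\lambda^*\in C^{m+1,\mu}$, then after $m$ further steps $v\in C^{m+2,\mu}$ up to $\{y_N=0\}$, so $B_{\bar r_0}(x_0)\cap\partial\{u>0\}\in C^{m+2,\mu}$. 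If $p$, $f$, $\lambda^*$ are analytic, then the transformed equation and boundary operator are analytic in all their arguments and $v$ is already $C^\infty$ up to $\{y_N=0\}$ by the previous step; the analyticity theorem for fully nonlinear elliptic equations with analytic oblique boundary data (Morrey; Kinderlehrer--Nirenberg) then shows that $v$, and hence $B_{\bar r_0}(x_0)\cap\partial\{u>0\}$, is real--analytic.

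The main point to watch is the bookkeeping in the bootstrap rather than a new difficulty: because the non--divergence form of the $p(x)$--Laplacian carries the term $\log|\nabla u|\,\langle\nabla u,\nabla p\rangle$, the hodograph equation depends on $\nabla p$, which is precisely why the hypotheses keep $p$ one order of differentiability ahead of $f$, and why $\lambda^*$, entering a first--order boundary condition, is also one order ahead of $f$; one must track this at every iteration step. The two structural facts that drive the scheme --- uniform ellipticity of the hodograph equation and obliqueness of the transformed boundary condition --- both rest on $\nabla u\ne0$ up to the free boundary, which is exactly the information imported from Theorem \ref{reg-weak-final}.
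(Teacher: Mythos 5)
Your argument is correct and is, in substance, a re-derivation of the result the paper simply quotes: the paper disposes of this corollary by applying Theorem 2 of Kinderlehrer--Nirenberg \cite{KN} to the equation written in fully nonlinear form $F(x,u,Du,D^2u)=0$, with $F$ built from the non-divergence $p(x)$-Laplacian (so $\nabla p$ enters $F$, which is exactly why $p$ and $\lambda^*$ are kept one order ahead of $f$, as you observed), and to the boundary condition $g(x,Du)=|Du|^2-{\lambda^*}^2(x)=0$, in a neighborhood where $|\nabla u|\ge\lambda_{\min}/2$; the partial hodograph transform, the resulting uniformly elliptic quasilinear equation with a nonlinear oblique boundary condition, and the Schauder/analyticity bootstrap you sketch are precisely the machinery inside that theorem, so your route buys transparency at the cost of re-proving a quotable result. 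The one point you gloss over --- and the only point the paper is careful to record --- is the starting regularity: Theorem \ref{reg-weak-final} gives $u\in C^2$ only inside $\{u>0\}$ and merely $C^{1,\gamma}$ up to the free boundary, so after the hodograph transform $v$ is only $C^{1,\gamma}$ up to $\{y_N=0\}$, and your step ``the oblique Schauder estimate gives $v\in C^{2,\mu}$'' is not an a priori estimate but a genuine regularity statement for a nonlinear oblique derivative problem whose solution is known only in $C^{1,\gamma}$; this is true (via difference quotients and the linear oblique theory in \cite{GT}, or Lieberman-type results), but it needs to be invoked explicitly, and it is exactly the observation from \cite{AC} cited in the paper: Theorem 2 of \cite{KN}, though stated for $u\in C^2$ up to the boundary, applies with $u\in C^2$ in $\{u>0\}$ and $C^{1,\gamma}$ up to $\partial\{u>0\}$.
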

\begin{proof} As in Theorem 8.4 in \cite{AC}, Theorem 6.3 and Remark 6.4 in \cite{ACF} and Corollary 9.2 in
\cite{DP1}, we use Theorem 2 in \cite{KN}.

In fact, we apply this theorem with our equation seen in the form $F(x,u,Du, D^2u)=0$, with
\[F(x,s,q,M)=
|q|^{p(x)-2}\Big[\sum_{ij}(\delta_{ij}+(p(x)-2)\frac{q{_i}q_{_j}}{|q|^2})M_{ij}+\sum_j p_{x_j}(x)\log|q| q_{_j}\Big]-f(x),
\]
in a neighborhood of the free boundary where $|\nabla u|\ge \frac{\lone}{2}$, and boundary condition in the form $g(x,Du)=0$,
with 
\[g(x,q)=|q|^2-{\lambda^*}^2(x).
\]
Already in \cite{AC} it was observed that Theorem 2 in \cite{KN} holds with $u\in C^2$ in $\{u>0\}$ and $u\in C^{1,\gamma}$ up to
$\partial\{u>0\}$, even though the result in \cite{KN} is stated with $u\in C^2$ up to $\partial\{u>0\}$.
\end{proof}

\end{section}
%%%%%%%%%%%end section regularity of the free boundary%%%%%%%%%%%%%%%%%%%%%%%%%%%%%%%%%%
%%%%%%%%%%%begin section application to a singular perturbation problem%%%%%%%%%%%%%%%%%%%%%%%%%%%
\begin{section}{Application to a singular perturbation problem}

In this section we apply the regularity results obtained in the
previous section to a singular perturbation problem we studied in
\cite{LW4}. Our regularity results apply to limit functions satisfying suitable conditions that are fulfilled, for instance, under the situation we
considered in \cite{LW5}.

For a different application of these regularity results we refer to our work \cite{LW5}.

We next consider the following singular pertubation
 problem for the $p_\ep(x)$-Laplacian:
\begin{equation}
\label{eq}\tag{$P_\ep(\fep, p_\ep)$}
\Delta_{p_\ep(x)}\uep={\beta}_{\varepsilon}(\uep)+\fep, \quad
u^{\ep}\geq 0
\end{equation}
in a domain $\Omega\subset \Bbb R^{N}$. Here $\ep>0$,
${\beta}_{\varepsilon}(s)={1 \over \varepsilon} \beta({s \over
\varepsilon})$, with $\beta$  a  Lipschitz function satisfying
$\beta>0$ in $(0,1)$, $\beta\equiv 0$ outside $(0,1)$ and $\int
\beta(s)\, ds=M$.

We assume that $1<p_{\min}\le p_\ep(x)\le p_{\max}<\infty$,
$\|\nabla p_\ep\|_{L^{\infty}}\leq L$ and that  the functions
$\uep$ and $\fep$ are uniformly bounded.

In \cite{LW4} we proved local uniform Lipschitz regularity for
solutions of this problem, we passed to the limit $(\ep\to 0)$ and
we showed that, under suitable assumptions, limit functions are
weak solutions to the free boundary problem: $u\ge 0$ and
\begin{equation}
\label{bernoulli-px}\tag{$P(f,p,{\lambda}^*)$}
\begin{cases}
\Delta_{p(x)}u= f & \mbox{in }\{u>0\}\\
u=0,\ |\nabla u| = \lambda^*(x) & \mbox{on }\partial\{u>0\}
\end{cases}
\end{equation}
with $\lambda^*(x)=\Big(\frac{p(x)}{p(x)-1}\,M\Big)^{1/p(x)}$, $p=\lim p_\ep$ and
$f=\lim \fep$.

\bigskip

Before giving the precise statement of one of the results we
proved in \cite{LW4}, we need the following definitions

\begin{defi} Let $u$ be a continuous nonnegative function in a domain $\Omega\subset
\Bbb R^{N}$. Let $x_0\in\Omega\cap\partial\{u>0\}$. We say that
$x_0$ is a regular point from the positive side if there is a ball
$B\subset\{u>0\}$ with $x_0\in\partial B$.
\end{defi}

\begin{defi} Let $u$ be a continuous nonnegative function in a domain $\Omega\subset
\Bbb R^{N}$. Let $x_0\in\Omega\cap\partial\{u>0\}$.

We say that condition (D) holds at $x_0$ if there exist $\gamma>0$
and $0<c<1$ such that, for every $x\in
B_\gamma(x_0)\cap\partial\{u>0\}$ which is regular from the
positive side and $r\le\gamma$, there holds that $|\{u=0\}\cap
B_r(x)|\ge c|B_r(x)|$.
\end{defi}

\begin{defi} Let $u$ be a continuous nonnegative function in a domain $\Omega\subset
\Bbb R^{N}$. Let $x_0\in\Omega\cap\partial\{u>0\}$.

We say that condition (L) holds at $x_0$ if there exist
$\gamma>0$, $\theta>0$ and $s_0>0$ such that for every point $y\in
B_\gamma(x_0)\cap\partial\{u>0\}$ which is regular from the
positive side, and for every ball $B_r(z)\subset\{u>0\}$ with
$y\in\partial B_r(z)$ and $r\le\gamma$, there exists a  unit
vector ${\tilde e}_y$,  with $\langle{\tilde
e}_y,z-y\rangle>\theta||z-y||$, such that $u(y-s{\tilde e}_y)=0$
for $0< s< s_0.$
\end{defi}

{}In \cite{LW4} we obtained the following result:

\begin{theo}\label{lim=weak}
 Let $u^{\ep_j}$ be a family of solutions to
 $\pepj(\fepj,
p_{\epj})$ in a domain $\Omega\subset \Bbb R^{N}$ with
$1<p_{\min}\le p_{\epj}(x)\le p_{\max}<\infty$ and $p_{\epj}(x)$
Lipschitz continuous with $\|\nabla p_{\epj}\|_{L^{\infty}}\leq
L$, for some $L>0$. Assume that $u^{\ep_j}\rightarrow u$ uniformly on compact subsets of $\Omega$,
$\fepj\rightharpoonup f$ $*-$weakly in $L^\infty(\Omega)$, $p_{\epj}\to p$ uniformly on compact subsets
of $\Omega$ and $\ep_j\to 0$.

Assume that $u$ is locally uniformly nondegenerate on
$\Omega\cap\partial\{u>0\}$ and that at every point
$x_0\in\Omega\cap\partial\{u>0\}$ either condition (D) or
condition (L)  holds.

Then,  $u$ is a weak solution to the free boundary problem:
$u\ge0$ and
\begin{equation}
\tag{$P(f,p,{\lambda}^*)$}
\begin{cases}
\Delta_{p(x)}u = f & \mbox{in }\{u>0\}\\
u=0,\ |\nabla u| = \lambda^*(x) & \mbox{on }\partial\{u>0\}
\end{cases}
\end{equation}
with $\lambda^*(x)=\Big(\frac{p(x)}{p(x)-1}\,M\Big)^{1/p(x)}$ and
$M=\int \beta(s)\, ds$.
\end{theo}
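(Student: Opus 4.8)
The plan is to verify directly the four conditions of Definition \ref{weak2} for the limit function $u$, drawing on the a priori estimates established for the singular perturbation problem $\pep(\fep,p_\ep)$ in \cite{LW4}. First I would record what survives in the limit: the uniform local Lipschitz bound for the family $\{u^{\ep_j}\}$ proved in \cite{LW4} passes to $u$, so $u$ is locally Lipschitz, nonnegative, and $u\in W^{1,p(\cdot)}_{\rm loc}(\Omega)$. On a compact subset of $\{u>0\}$ one has $u\ge c>0$, hence $u^{\ep_j}>\ep_j$ there for $j$ large by uniform convergence, so $\beta_{\ep_j}(u^{\ep_j})\equiv 0$ and $\Delta_{p_{\ep_j}(x)}u^{\ep_j}=f^{\ep_j}$; interior gradient estimates for the $p(x)$-Laplacian (\cite{Fan}) give $\nabla u^{\ep_j}\to\nabla u$ locally uniformly in $\{u>0\}$, and letting $j\to\infty$ with $p_{\ep_j}\to p$ and $f^{\ep_j}\rightharpoonup f$ yields $\Delta_{p(x)}u=f$ in $\{u>0\}$. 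This is condition (1). Condition (2) is then immediate: the Lipschitz bound gives $\frac1r\sup_{B_r(x)}u\le C_{\max}$, while the lower bound $c_{\min}\le\frac1r\sup_{B_r(x)}u$ is exactly the assumed local uniform nondegeneracy of $u$ (Remark \ref{equiv-nondeg} lets one pass freely between the three formulations, using that $\beta_{\ep}\ge 0$ forces $\Delta_{p(x)}u\ge f\chi_{\{u>0\}}$ in the limit).

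With (1) and (2) in hand, all the structure results of Section~2 apply to $u$; in particular $\{u>0\}$ has locally finite perimeter, $\Delta_{p(x)}u-f\chi_{\{u>0\}}=q_u\,\H\lfloor\partial\{u>0\}$, and blow-up sequences behave as in Lemma \ref{propblowup}. The heart of the proof is the identification of the slope, i.e. conditions (3) and (4), and this is where the one–dimensional analysis of the perturbation and the geometric hypotheses (D), (L) enter. I would recall from \cite{LW4} the basic 1D fact: a traveling-wave profile $\phi$ solving $\Delta_p\phi=\beta_1(\phi)$ satisfies $\frac{p-1}{p}(\phi')^p=\int_0^\phi\beta$, so $\phi'\to\big(\frac{p}{p-1}M\big)^{1/p}=\lambda^*$ as $\phi\to\infty$; correspondingly the uniform estimate $|\nabla u^{\ep_j}|\le\lambda^*(x)+o(1)$ holds and passes to the limit away from $\{u=0\}$, giving $\limsup_{x\to x_0,\,u(x)>0}|\nabla u(x)|\le\lambda^*(x_0)$ for every $x_0\in\partial\{u>0\}$. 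If in addition a ball $B\subset\{u=0\}$ touches the free boundary at $x_0$, a blow-up centred at $x_0$ produces a global solution $u_0$ vanishing on the half-space on the $B$-side; nondegeneracy together with the subsolutions built from $\phi$ in \cite{LW4} forces $u_0=\alpha\langle x-x_0,\nu\rangle^-$ with $\alpha\ge\lambda^*(x_0)$, which is the remaining part of condition (4).

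For condition (3) I would argue at an arbitrary $x_0\in\partial_{\rm red}\{u>0\}$ with measure–theoretic normal $\nu(x_0)$ that is also a Lebesgue point of $q_u$ satisfying \eqref{limsupprop}. Blowing up along $u_{\rho_k}(x)=\rho_k^{-1}u(x_0+\rho_k x)$ and using Lemma \ref{propblowup} gives a global limit $u_0$ with $\Delta_{p(x_0)}u_0=0$ in $\{u_0>0\}$ and $0\in\partial\{u_0>0\}$. Since $x_0$ lies in the reduced boundary, $\{u_0>0\}$ and $\{u_0=0\}$ occupy complementary half-spaces in the measure-theoretic sense; conditions (D) and (L) are exactly what upgrade this to $\{u_0>0\}=\{\langle x,\nu(x_0)\rangle<0\}$ and rule out two–plane limits — (D) controls the density of the zero set at points regular from the positive side, while (L) provides a transversal segment along which $u$ vanishes, preventing a nontrivial positive part on the far side. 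A Liouville-type argument for the $p(x_0)$-Laplacian on a half-space, together with nondegeneracy, then forces $u_0=\alpha\langle x,\nu(x_0)\rangle^-$ with $\alpha>0$, and a simultaneous (diagonal) blow-up of the $u^{\ep_j}$ around $x_0$ reduces the determination of $\alpha$ to the 1D traveling wave, whose positive-side slope is precisely $\lambda^*(x_0)=\big(\frac{p(x_0)}{p(x_0)-1}M\big)^{1/p(x_0)}$; hence $\alpha=\lambda^*(x_0)$, consistently with $q_u(x_0)=\lambda^*(x_0)^{p(x_0)-1}$ from the representation formula (cf. Lemma \ref{glambda*}). Unwinding the blow-up gives $u(x)=\lambda^*(x_0)\langle x-x_0,\nu(x_0)\rangle^-+o(|x-x_0|)$ for $\H$-a.e. such $x_0$, i.e. condition (3).

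The step I expect to be the main obstacle is precisely the identification of the coefficient $\alpha$ with $\lambda^*(x_0)$ at reduced free boundary points, i.e. the quantitative link between the concentrated mass $\int\beta=M$ of the perturbation and the behaviour of $|\nabla u|^{p(x)-2}\nabla u$ across the free boundary in the variable-exponent setting. This requires simultaneously (i) controlling $|\nabla u^{\ep_j}|$ on the transition layer $\{0<u^{\ep_j}<\ep_j\}$ uniformly in $j$, (ii) ruling out degenerate (two–plane or flat) blow-ups via hypotheses (D) and (L), and (iii) carrying the constant-exponent traveling-wave computation through the $x$–dependence of $p$. All three are delicate; the remaining verifications — the limit passages in (1)–(2) and the half-space Liouville argument — are comparatively routine once the uniform estimates of \cite{LW4} are invoked.
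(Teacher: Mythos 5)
The first thing to say is that the paper you are comparing against does not itself prove Theorem \ref{lim=weak}: it is quoted as a result established in \cite{LW4} (``In \cite{LW4} we obtained the following result''), and the present paper only uses it, together with Theorem \ref{reg-weak-final}, to deduce Theorem \ref{reg-lim-singpert}. So there is no internal proof to measure your argument against; the only meaningful comparison is with the strategy of \cite{LW4} (and its constant-exponent antecedents \cite{LW1,LW3}), and at that level your outline follows the expected route: verify conditions (1)--(4) of Definition \ref{weak2}, getting (1)--(2) from the uniform Lipschitz estimate, local uniform convergence of gradients inside $\{u>0\}$ and the assumed nondegeneracy, and (3)--(4) from blow-ups (Lemma \ref{propblowup}-type arguments), the asymptotic gradient bound, and the identification of the slope $\lambda^*(x)=\big(\frac{p(x)}{p(x)-1}M\big)^{1/p(x)}$ under hypotheses (D) and (L).

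As a proof, however, what you have written is a programme rather than an argument: the steps that carry the actual content of the theorem are invoked, not carried out. Concretely: (i) to make sense of $\Delta_{p(x)}u-f\chi_{\{u>0\}}$ as a nonnegative measure supported on the free boundary you need a.e.\ convergence of $\nabla u^{\varepsilon_j}$ in all of $\Omega$, not just locally uniformly in $\{u>0\}$; this is one of the nontrivial limit results of \cite{LW4} and you pass over it. (ii) The bound $\limsup_{x\to x_0,\,u>0}|\nabla u|\le\lambda^*(x_0)$ and the lower bound at points touched by a ball contained in $\{u=0\}$ are precisely the statements whose proofs require the uniform control of $|\nabla u^{\varepsilon_j}|$ across the transition layer $\{0<u^{\varepsilon_j}<\varepsilon_j\}$ and the $1$D profile analysis in the variable-exponent setting; asserting that they ``pass to the limit'' assumes what must be proved. (iii) The identification of the blow-up coefficient $\alpha$ with $\lambda^*(x_0)$, and the exact way conditions (D) and (L) exclude degenerate or two-plane blow-up limits, is the heart of \cite{LW4}, and you explicitly defer it (your own closing paragraph concedes this). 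So your proposal is a faithful reconstruction of the route taken in \cite{LW4}, but it does not constitute an independent proof; items (i)--(iii) are exactly where the remaining work lies.
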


\medskip

\begin{rema}
In \cite{LW5} we proved that if $\uepj$, $\fepj$, $p_{\epj}$,
$\ep_j$,  $f$ and $p$ are as  in Theorem \ref{lim=weak} and
$u^{\ep_j}\rightarrow u$ uniformly on compact subsets of $\Omega$
with $\uepj$ local minimizers of an energy functional, then  $u$
is under the assumptions of Theorem~\ref{lim=weak}.
\end{rema}

\medskip

As a first application of Theorem \ref{reg-weak-final} we obtain
the following result on the regularity of the free boundary for
limit functions of the singular perturbation problem
$\pepj(\fepj,p_{\epj})$.

\begin{theo}\label{reg-lim-singpert}
Let $\uepj$, $\fepj$, $p_{\epj}$, $\ep_j$, $u$, $f$ and $p$ be as
in Theorem \ref{lim=weak}. Assume moreover that $f\in W^{1,q}(\Omega)$ and $p\in W^{2,q}(\Omega)$
 with $q>\max\{1,N/2\}$.

Let $x_0\in\Omega\cap\partial_{\rm{red}}\{u>0\}$. Then, there
exists $\bar r_0>0$ such that $B_{\bar r_0}(x_0)\cap\partial\{u>0\}$ is a
$C^{1,\alpha}$ surface for some $0<\alpha<1$. It follows that, for some $0<\gamma<1$, 
$u$ is $C^{1,\gamma}$ up to $B_{\bar r_0}(x_0)\cap\partial\{u>0\}$ and the free
boundary condition is satisfied in the classical sense. In
addition, for every $x_1\in B_{\bar r_0}(x_0)\cap\partial\{u>0\}$ there is a neighborhood $\mathcal U$ such that
$\nabla u\neq0$ in  $\mathcal U\cap\{u>0\}$, $u\in
W_{\rm{loc}}^{2,2}(\mathcal U\cap\{u>0\})$ and the equation is
satisfied in a pointwise sense in $\mathcal U\cap\{u>0\}$.

If
moreover $\nabla p$ and $f$ are  H\"older
continuous in $\Omega$, then $u\in C^2(\mathcal U\cap\{u>0\})$ and
the equation is satisfied in the classical sense in $\mathcal
U\cap\{u>0\}$.
\end{theo}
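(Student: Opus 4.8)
The plan is to read Theorem \ref{reg-lim-singpert} as a direct consequence of the free boundary regularity already established in Theorem \ref{reg-weak-final}, once the limit function $u$ is identified as a weak solution with admissible data. First I would invoke Theorem \ref{lim=weak}: under the standing hypotheses (which include the nondegeneracy and the conditions (D)/(L) that define ``as in Theorem \ref{lim=weak}''), the limit $u$ is a weak solution of \ref{bernoulli-px} in $\Omega$, with exponent $p=\lim p_{\epj}$, right-hand side $f=\lim\fepj$, and slope $\lambda^*(x)=\big(\frac{p(x)}{p(x)-1}\,M\big)^{1/p(x)}$, where $M=\int\beta$. Thus $x_0\in\Omega\cap\partial_{\rm{red}}\{u>0\}$ is a reduced free boundary point of a genuine weak solution.

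The second step is to check that the triple $(f,p,\lambda^*)$ meets the hypotheses of Theorem \ref{reg-weak-final} (equivalently, of Theorem \ref{reg-weak-conflat}). For $f$ and $p$ this is immediate from the assumptions: $f\in L^\infty(\Omega)\cap W^{1,q}(\Omega)$ and $p\in W^{1,\infty}(\Omega)\cap W^{2,q}(\Omega)$ with $q>\max\{1,N/2\}$ and $1<p_{\min}\le p(x)\le p_{\max}<\infty$. The only point that needs a line of argument is the regularity of $\lambda^*$. Writing $\lambda^*(x)=\Phi(p(x))$ with $\Phi(t)=\big(\frac{t}{t-1}\,M\big)^{1/t}$, the scalar function $\Phi$ is $C^\infty$ on the compact interval $[p_{\min},p_{\max}]\subset(1,\infty)$; hence it is bounded there between two positive constants $\lone$ and $\ltwo$ and is Lipschitz there. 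Since $p\in W^{1,\infty}(\Omega)$, the chain rule gives $\nabla\lambda^*=\Phi'(p)\nabla p\in L^\infty(\Omega)$, so $\lambda^*$ is (locally) Lipschitz and belongs to $C^{\alpha^*}(\Omega)$ for any $\alpha^*\in(0,1]$, with $0<\lone\le\lambda^*(x)\le\ltwo<\infty$ and $[\lambda^*]_{C^{\alpha^*}(\Omega)}\le C^*$ for a suitable $C^*$ depending only on $p_{\min},p_{\max},M$ and $\|\nabla p\|_{L^\infty(\Omega)}$.

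The third and final step is simply to apply Theorem \ref{reg-weak-final} at the point $x_0$. This yields at once: the existence of $\bar r_0>0$ with $B_{\bar r_0}(x_0)\cap\partial\{u>0\}$ a $C^{1,\alpha}$ surface; the $C^{1,\gamma}$ regularity of $u$ up to that portion of the free boundary and the classical fulfillment of the free boundary condition; and, in a neighborhood $\mathcal{U}$ of each $x_1\in B_{\bar r_0}(x_0)\cap\partial\{u>0\}$, the nonvanishing of $\nabla u$ on $\{u>0\}$, the membership $u\in W^{2,2}_{\rm loc}(\mathcal{U}\cap\{u>0\})$, and the pointwise validity of the equation. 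The last assertion, that Hölder continuity of $\nabla p$ and $f$ upgrades this to $u\in C^2(\mathcal{U}\cap\{u>0\})$ with the equation holding classically, is exactly the corresponding clause of Theorem \ref{reg-weak-final}, whose hypotheses are already in force here.

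There is no substantial obstacle in this argument — it is a packaging of Theorems \ref{lim=weak} and \ref{reg-weak-final}. The single place requiring genuine (though routine) verification is that the explicit slope $\lambda^*$ produced by the singular perturbation limit has the required structural properties, namely $\lambda^*\in C^{\alpha^*}(\Omega)$ with a two-sided positive bound and controlled Hölder seminorm; this follows from the smoothness of $\Phi$ away from $t=1$ together with $p\in W^{1,\infty}(\Omega)$, as indicated above.
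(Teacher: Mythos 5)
Your proposal is correct and is essentially the paper's own proof: the authors likewise obtain the result by combining Theorem \ref{lim=weak} (to identify $u$ as a weak solution with slope $\lambda^*=\big(\tfrac{p(x)}{p(x)-1}M\big)^{1/p(x)}$) with Theorem \ref{reg-weak-final}. Your explicit check that $\lambda^*$ inherits the required H\"older regularity and two-sided positive bounds from $p\in W^{1,\infty}(\Omega)$ is a detail the paper leaves implicit, and it is carried out correctly.
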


\begin{proof} The result follows from the application of Theorems  \ref{lim=weak} and \ref{reg-weak-final} above.
\end{proof}
We also obtain higher regularity from the application of  Corollary \ref{higher-reg}.
\begin{coro}\label{higher-reg-lim}
Let $u$, $x_0$ and $\bar r_0$ be as in Theorem \ref{reg-lim-singpert}. Assume moreover that $p\in C^2(\Omega)$ and $f\in C^1(\Omega)$, then
$B_{\bar r_0}(x_0)\cap\partial\{u>0\}\in C^{2,\mu}$ for every
$0<\mu<1$.
If $p\in C^{m+1,\mu}(\Omega)$ and $f\in
C^{m,\mu}(\Omega)$  for some $0<\mu<1$ and $m\ge1$, then
$B_{\bar r_0}(x_0)\cap\partial\{u>0\}\in C^{m+2,\mu}$.

Finally, if $p$ and $f$ are analytic, then $B_{\bar r_0}(x_0)\cap\partial\{u>0\}$ is analytic.
\end{coro}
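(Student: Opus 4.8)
The plan is to deduce Corollary \ref{higher-reg-lim} directly from Theorem \ref{reg-lim-singpert} together with Corollary \ref{higher-reg}. Indeed, Theorem \ref{reg-lim-singpert} already guarantees that $u$ is a weak solution to \ref{bernoulli-px} in $\Omega$ with slope $\lstar(x)=\big(\frac{p(x)}{p(x)-1}\,M\big)^{1/p(x)}$, that $x_0\in\Omega\cap\partial_{\rm{red}}\{u>0\}$, and that $B_{\bar r_0}(x_0)\cap\partial\{u>0\}$ is a $C^{1,\alpha}$ surface; this is precisely the configuration in which Corollary \ref{higher-reg} applies. Hence the only point to check is that the extra smoothness hypotheses on $p$ and $f$ assumed here carry over to the smoothness hypotheses on $p$, $f$ \emph{and} $\lstar$ required by Corollary \ref{higher-reg}.

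The key observation is that $\lstar$ is obtained from $p$ by composition with a fixed, highly regular scalar function. First I would set $\Phi(t)=\big(\frac{t}{t-1}\,M\big)^{1/t}$ and note that $\Phi$ is real analytic on the open interval $(1,\infty)$: there $t\mapsto \frac{t}{t-1}M$ is analytic and strictly positive, the principal power $s\mapsto s^{1/t}$ is analytic for $s>0$, and $t\mapsto 1/t$ is analytic. Since $1<p_{\min}\le p(x)\le p_{\max}<\infty$, the function $p$ takes values in the compact subinterval $[p_{\min},p_{\max}]\subset(1,\infty)$, on which $\Phi$ and all its derivatives are bounded. Writing $\lstar=\Phi\circ p$ and using the chain rule one gets: if $p\in C^2(\Omega)$ then $\lstar\in C^2(\Omega)$; if $p\in C^{m+1,\mu}(\Omega)$ then $\lstar\in C^{m+1,\mu}(\Omega)$ (no loss of a derivative nor of the H\"older exponent, since $\Phi\in C^\infty$); and if $p$ is analytic then $\lstar$ is analytic, as the composition of analytic maps.

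With this at hand the three assertions follow at once from Corollary \ref{higher-reg}. In the first case $p\in C^2$, $f\in C^1$ and $\lstar\in C^2$, so $B_{\bar r_0}(x_0)\cap\partial\{u>0\}\in C^{2,\mu}$ for every $0<\mu<1$; in the second case $p\in C^{m+1,\mu}$, $f\in C^{m,\mu}$ and $\lstar\in C^{m+1,\mu}$, so the free boundary is $C^{m+2,\mu}$; and in the analytic case $p$, $f$ and $\lstar$ are all analytic, so the free boundary is analytic. The only mildly delicate point --- and essentially the only thing I would actually write out --- is the bookkeeping that $\Phi\circ p$ inherits \emph{exactly} the H\"older (or analytic) class of $p$; this is routine once one invokes the two-sided bound on $p$ to control $\Phi$ and its derivatives on $[p_{\min},p_{\max}]$, and there is no further PDE input beyond what Corollary \ref{higher-reg} already supplies. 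There is no real obstacle here.
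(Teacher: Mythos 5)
Your proposal is correct and takes essentially the same route as the paper, which obtains this corollary simply by applying Corollary \ref{higher-reg} to the weak solution produced by Theorem \ref{reg-lim-singpert}. The only detail the paper leaves implicit is the one you write out: since $\lambda^*=\Phi\circ p$ with $\Phi(t)=\big(\tfrac{t}{t-1}M\big)^{1/t}$ analytic on $(1,\infty)$ and $p$ ranging in $[p_{\min},p_{\max}]\subset(1,\infty)$, the slope $\lambda^*$ inherits the $C^2$, $C^{m+1,\mu}$ or analytic regularity of $p$, so the hypotheses of Corollary \ref{higher-reg} are met.
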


\end{section}

\bigskip
%%%%%%%%%%%end section application to a singular perturbation problem%%%%%%%%%%%%%%%%%%%%%%%%%%%%%%%%%%

%%%%%%%%%%%%%%begin references%%%%%%%%%%%%%%%%%%%%%%%%%%%%%%%%%

%%%%%%%%% end of references %%%%%%%%%%%%%%%%%%%%%%%%%%%%%%%

\end{document}